\definecolor{darkred}{rgb}{0.65,0.15,0}
\definecolor{newgreen}{rgb}{0.2,0.62,0.14}
\definecolor{darkblue}{rgb}{0.1,0.15,0.7}
\definecolor{purple}{rgb}{0.35,0.1,0.55}
\numberwithin{equation}{section}
\newcommand{\nn}{\nonumber}
\newcommand{\tA}{{\mathcal{A}}}
\newcommand{\tS}{{\mathcal{S}}}
\newcommand{\cX}{{\mathcal{X}}}
\newcommand{\cY}{{\mathcal{Y}}}
\newcommand{\tV}{{\mathfrak{V}}}
\newcommand{\tW}{{\mathfrak{W}}}
\newcommand{\cK}{{\mathcal{K}}}
\newcommand{\lb}{\left[}
\newcommand{\rb}{\right]}
\newcommand{\mf}[1]{{\mathfrak{#1}}}
\newcommand{\ad}{{\mathrm{ad}}}
\newcommand{\ghor}{\mathring{\mathfrak{g}}}
\newcommand{\khor}{\mathring{\mathfrak{k}}}
\newcommand{\phor}{\mathring{\mathfrak{p}}}
\newcommand{\eprint}[1]{{\href{http://arxiv.org/abs/#1}{[\texttt{#1}]}}}
\newcommand{\eprintN}[1]{{\href{http://arxiv.org/abs/#1}{[\texttt{#1 [hep-th]}]}}}
\newcommand{\eprintRT}[1]{{\href{http://arxiv.org/abs/#1}{[\texttt{#1 [math.RT]}]}}}
\newcommand{\eprintGR}[1]{{\href{http://arxiv.org/abs/#1}{[\texttt{#1 [math.GR]}]}}}
\theoremstyle{remark}
\newtheorem*{rem*}{\protect\remarkname}
\theoremstyle{plain}
\newtheorem{thm}{\protect\theoremname}
\theoremstyle{remark}
\newtheorem{rem}[thm]{\protect\remarkname}
\theoremstyle{plain}
\newtheorem{lem}[thm]{\protect\lemmaname}
\theoremstyle{plain}
\newtheorem{prop}[thm]{\protect\propositionname}
\theoremstyle{plain}
\newtheorem{cor}[thm]{\protect\corollaryname}
\theoremstyle{plain}
\newtheorem*{prop*}{\protect\propositionname}
\theoremstyle{definition}
\newtheorem*{problem*}{\protect\problemname}
\theoremstyle{definition}
\newtheorem{defn}[thm]{\protect\definitionname}
\theoremstyle{plain}
\newtheorem*{conjecture*}{\protect\conjecturename}
\providecommand{\conjecturename}{Conjecture}
\providecommand{\corollaryname}{Corollary}
\providecommand{\lemmaname}{Lemma}
\providecommand{\problemname}{Problem}
\providecommand{\propositionname}{Proposition}
\providecommand{\remarkname}{Remark}
\providecommand{\theoremname}{Theorem}
\providecommand{\definitionname}{Definition}
\begin{document}

\mbox{}\\[10mm]

\begin{center}

{\LARGE \bf \sc  Representations of involutory subalgebras\\[2mm]of affine Kac--Moody algebras}\\[5mm]

\vspace{6mm}
\normalsize
{\large  Axel Kleinschmidt${}^{1,2}$, Ralf K\"ohl${}^{3,4}$, Robin Lautenbacher${}^3$ and Hermann Nicolai${}^{1}$}

\vspace{10mm}
${}^1${\it Max-Planck-Institut f\"{u}r Gravitationsphysik (Albert-Einstein-Institut)\\
Am M\"{u}hlenberg 1, DE-14476 Potsdam, Germany}
\vskip 1 em
${}^2${\it International Solvay Institutes\\
ULB-Campus Plaine CP231, BE-1050 Brussels, Belgium}
\vskip 1 em
${}^3${\it  Justus-Liebig-Universit\"at, Mathematisches Institut, Arndtstra\ss{}e 2, 35392 Gie\ss{}en, Germany}
\vskip 1 em
${}^4${\it current address: Christian-Albrechts-Universit\"at, Mathematisches Seminar, Heinrich-Hecht-Platz 6, 24118 Kiel, Germany}
\vspace{10mm}

\hrule

\vspace{5mm}

 \begin{tabular}{p{14cm}}

We consider the subalgebras of split real, non-twisted affine Kac--Moody Lie algebras that are fixed by the Cartan--Chevalley involution. These infinite-dimensional Lie algebras are not of Kac--Moody type and admit finite-dimensional unfaithful representations. We exhibit a formulation of these algebras in terms of $\mathbb{N}$-graded Lie algebras that allows the construction of a large class of representations 
using the techniques of induced representations. We study how these representations relate to previously established spinor representations as they arise in the theory of supergravity and work out a detailed example in the case of the affine extension of $\mf{e}_8$.
\end{tabular}

\vspace{6mm}
\hrule
\end{center}

\thispagestyle{empty}

\newpage
\setcounter{page}{1}

\setcounter{tocdepth}{1}
\tableofcontents

\bigskip

\section{Introduction} 

Every (split real) Kac--Moody algebra $\mf{g}=\mf{g}(A)$ for an indecomposable generalised Cartan matrix $A$ has a Cartan--Chevalley involution $\omega$ that defines an involutory (`maximal compact') Lie subalgebra $\mf{k}=\mf{k}(\mf{g})<\mf{g}$ as its subalgebra of fixed points~\cite{Kac1990}. If $\mf{g}$ is of finite type, then $\mf{k}$ is compact, whence reductive; its structure and representation theory play a key role in studies of symmetric spaces and automorphic representations. For  Kac--Moody algebras $\mf{g}$, generators and relations for $\mf{k}$ were given in the early work~\cite{Berman:1989}, but, curiously, very little is known about the representation theory of $\mf{k}$ in the infinite-dimensional case.
 One reason for this is that these algebras do not fit into 
more standard frameworks, as they do not exhibit a graded, but rather a filtered structure;
in particular, they are not Kac--Moody algebras~\cite{Kleinschmidt:2005bq}, unlike the algebras from which they 
descend. This means also that the customary tools of representation theory (lowest 
and highest weight representations, character formulas, {\em etc.}) are not applicable. This being said, we point out (see appendix~\ref{app:gim}) that the complexification of $\mf{k}$ is a quotient of a GIM algebra as defined by Slodowy~\cite{Slodowy}; therefore, the representation theory of the latter, once developed, should help with understanding the (real and complex) representations of $\mf{k}$.

A remarkable property of $\mf{k}$ is that it inherits an invariant bilinear form from $\mf{g}$. For split real $\mf{g}$ the bilinear form on $\mf{g}$ is indefinite but its restriction to $\mf{k}$ is (negative) definite and for this reason $\mf{k}$ is sometimes referred to as maximal compact. One may 
thus consider the Hilbert space completion $\widehat{\mf{k}}$ of $\mf{k}$ with respect to the norm defined by the invariant bilinear form. One of the results of our paper is that, for untwisted 
affine $\mf{g}$, the Lie bracket does not close on $\widehat{\mf{k}}$, so that $\widehat{\mf{k}}$ is 
not a Hilbert Lie algebra with respect to the standard invariant bilinear form; in fact, it is
not even a Lie algebra (see appendix~\ref{app:compl}).

The Lie algebra $\mf{k}$ and their associated groups are potentially important in physics applications, namely as infinite-dimensional generalisations of the R symmetries which govern the fermionic sectors of 
certain supersymmetric unified models of the fundamental interactions.
Perhaps the most surprising result to come out of these physics motivated studies is that 
the infinite-dimensional algebras $\mf{k}$, unlike the Kac--Moody algebras they are embedded into, 
admit non-trivial {\em finite}-dimensional, hence unfaithful, representations, in particular 
fermionic (double-valued) ones, which cannot be obtained by decomposing representations
of the ancestor Kac--Moody algebra $\mf{g}$ under its subalgebra $\mf{k}$.
These representations were originally found by
analysing the action of the infinite-dimensional duality groups arising in the dimensional 
reduction of \mbox{(super-)}gravities to space-time dimensions $D\leq 2$ on the 
fermions. For the affine case it was realised already long ago 
 \cite{Nicolai:1991tt,Nicolai:2004nv} that these actions correspond to evaluation 
representations of a novel type, involving not just the evaluation of loop group elements 
at one of two distinguished points in the complex spectral parameter plane, but also its higher 
derivatives. Likewise, for the indefinite, and more specifically hyperbolic, extensions 
of affine algebras and their involutory subalgebras, several concrete examples were 
found in terms of actions on the supersymmetry parameters
and the gravitino fields (at a given spatial point)  \cite{deBuyl:2005zy,Damour:2005zs,deBuyl:2005sch,Damour:2006xu,Kleinschmidt:2006tm,Damour:2009zc}.
These actions extend the so-called `generalised holonomy' groups in the physics 
literature~\cite{Duff:1990xz,Duff:2003ec,Hull:2003mf} to an infinite-dimensional context. Furthermore, and most relevant 
for the present work, some new finite-dimensional representations {\em beyond} supergravity 
were  identified in \cite{Kleinschmidt:2013eka,Kleinschmidt:2016ivr}, and their structure  
was further analysed in~\cite{Kleinschmidt:2018hdr}, where surprising features were discovered, such as the generic non-compactness of the 
quotient algebras (and quotient groups) obtained by dividing the original
algebra by the annihilating ideal of the given representation. These representations were further studied in~\cite{Hainke:2014,Lautenbacher:2017} where in particular the action of the corresponding (covering) group representation of $\tilde{K}(G)$~\cite{Ghatei:2015} was clarified; moreover, in complete analogy to the finite-dimensional situation it turns out that in the simple-laced case this two-fold covering group $\tilde{K}(G)$ is simply connected with respect to its natural topology \cite{Harring:2019} (induced from the Kac--Peterson topology on the corresponding Kac--Moody group).  All these studies of Lie algebra representations and corresponding covering group representations so far, while applicable to large classes of generalised Cartan matrices (simply-laced; often even any symmetrisable generalised Cartan matrix) have been limited to a small number of 
explicitly known examples of concrete representations, as all efforts to find a larger class of examples of representations and 
to understand their underlying structure and representation theory in a more systematic way have failed until now.
 
\medskip 
 
In this paper, we study the simplest case, corresponding to untwisted affine Kac--Moody algebras over $\mathbb{K}=\mathbb{R}$ or $\mathbb{K}=\mathbb{C}$. We show that there do exist infinitely many such unfaithful representations of ever increasing dimensions. Our construction finally provides a systematic {\em raison d'\^etre} 
for such representations. Here, we aim for an {\em ab initio} 
construction of $\mf{k}$ representations (in particular of spinorial nature) and not for representations that are obtained by branching representations of the affine $\mf{g}$. 
The main method, which is inspired 
by the supergravity realisation on fermions~\cite{Nicolai:2004nv}, is to map the filtered structure to a graded 
one, by replacing Laurent polynomials $\mathbb{K}[t,t^{-1}]$ in a variable $t$ by power series $\mathbb{K}[[u]]$ in another variable $u$ that is related to $t$ by~\eqref{eq:Moeb}. We shall refer to the graded structure as the \textit{parabolic model} of $\mf{k}$.
In the parabolic model based on $u$, 
representations can be constructed easily by means of the
Poincar\'e--Birkhoff--Witt (PBW) basis of the enveloping algebra of the underlying
 Lie algebra graded by powers of $u$. We will present several explicit examples (related to maximal
supergravity) to illustrate the construction, which puts in evidence the rapid growth of the
associated representations. Our results can be viewed as a prelude to the construction
of similar representations for involutory subalgebras of hyperbolic Kac--Moody algebras,
that will be required for a better understanding of the fermionic sector of unified
models, and perhaps pave the way for an embedding of the Standard Model fermions
into a unified framework~\cite{Meissner:2014joa,Kleinschmidt:2015sfa,Meissner:2018gtx}.

In a more global view, the results of this paper afford not only a completely 
new perspective on the representation theory of such algebras, but even more
importantly, open new avenues towards exploring the structure of the associated {\em groups}. 
This would be especially relevant for infinite-dimensional Kac--Moody algebras of {\em indefinite} type
for which, however, a representation theory extending the present results remains to be developed. 
While the loop group approach has proven very useful for affine Kac--Moody 
groups \cite{Pressley:1986} there is no comparable tool available for 
studying indefinite Kac--Moody groups, where often one
has to resort to `local-to-global' methods. Namely,
many mathematical observations concerning (two-spherical) Kac--Moody groups rely upon `gluing' 
the SL(2,$\mathbb{R}$) subgroups associated to the simple real roots of
the underlying Kac--Moody algebra $\mf{g}$ and on exploiting Tits' theory 
of buildings and extensions thereof \cite{Tits:1974,Kac:1985,Abramenko:1997,Carbone:2020,Marquis:2018}. However, in this way one does not gain a 
truly `global' perspective on the associated groups. The same statement applies 
to an analogous construction of the groups $K(G)$ associated to involutory subalgebras 
$\mf{k}\subset \mf{g}$ for indefinite $\mf{g}$, which would proceed by similarly `gluing' compact 
SO(2) $\subset$ SL(2,$\mathbb{R}$) subgroups associated to the simple real roots of $\mf{g}$~\cite{Ghatei:2015} (which, contrary to the Kac--Moody group $G$, in fact also works in the non-two-spherical case). By contrast, we here propose a fundamentally different approach 
which would be based on the construction of infinite sequences of larger and larger, 
but still finite-dimensional, quotient groups that, while remaining infinitely degenerate 
at each step of the sequence, capture `more and more' of the infinite-dimensional 
group (and in particular the information contained in the root spaces associated to imaginary
 roots), and in such a way that the group action is fully under control at 
each step of the construction --- in more mathematical terms, we propose to investigate whether the involutory Lie subalgebra $\mathfrak{k}$ and its corresponding group $K(G)$ might be residually finite-dimensional. In the affine case this is true for $\mf{g}$ and therefore for $\mf{k}$ as well. One of our results is that our construction recovers this for $\mf{k}$ without reference to $\mf{g}$.

Our main results are the following two theorems:
\renewcommand*{\thethm}{\Alph{thm}}
\begin{thm}
\label{thm:thm_A}
Denote by $\mathbb{P}\coloneqq \mathbb{K}\left[\left[u\right]\right]$ the ring of formal power series and let $\mathbb{P}^{\pm}$ denote the $\mathbb{K}$-submodules of formal even (resp. odd) power series in $u$. 
Let $\mf{g}\coloneqq\mathfrak{g}(A)\left(\mathbb{K}\right)$ be a split untwisted affine Kac--Moody algebra over $\mathbb{K}$ with 'maximal compact' subalgebra $\mf{k}$ fixed by the Cartan--Chevalley involution, let $\ghor=\khor\oplus\phor$ be the Cartan decomposition of the distinguished classical subalgebra  $\ghor$ of $\mf{g}$  and set
\begin{equation*}
\mathfrak{N}\left(\mathbb{P}\right)\coloneqq \Big(\mathbb{P}^{+}\otimes\mathring{\mathfrak{k}}\Big)\oplus\Big(\mathbb{P}^{-}\otimes\mathring{\mathfrak{p}}\Big),\quad \left[P\otimes x,Q\otimes y\right]=\left(P\cdot Q\right)\otimes\left[x,y\right],
\end{equation*}
where the bracket on the right-hand side denotes the $\mathring{\mathfrak{g}}$-bracket. Then there exist nonsurjective monomorphisms $\rho_{\pm}:\mathfrak{k}\rightarrow\mathfrak{N}\left(\mathbb{P}\right)$, whose precise formula is given in ~\eqref{eq:homans}.

Set $\mathbb{P}_{N}\coloneqq\mathbb{P}\diagup\mathcal{I}_N$, where $\mathcal{I}_N\coloneqq\left(u^{N+1}\right)$ is the ideal generated by the element $u^{N+1}$. Projection from $\mathbb{P}$ to $\mathbb{P}_N$ induces homomorphisms of Lie algebras $\pi_N: \mathfrak{N}\left(\mathbb{P}\right)\rightarrow \mathfrak{N}\left(\mathbb{P}_N\right)$ such that  the $\rho_{\pm}^{(N)}:=\rho_{\pm}\circ\pi_N:\mathfrak{k}\rightarrow\mathfrak{N}\left(\mathbb{P}_N\right)$ are epimorphisms.
\end{thm}

\begin{thm}
\label{thm:thm_B}
Let $V$ be a faithful $\khor$-module and denote by $\tV$ the induced $\mathfrak{N}\left(\mathbb{K}\left[u\right]\right)$-module. It inherits an $\mathbb{N}$-grading $\tV=\bigoplus_{i=0}^{\infty} \tV_i$ from $\mathfrak{N}\left(\mathbb{K}\left[u\right]\right)$ such that $\tV^{(N)} = \bigoplus_{i=N}^\infty \tV_i$ is an invariant submodule. 
The quotient $\tV / \tV^{(N)}$  is a finite-dimensional $\mf{N}(\mathbb{P}_N)$-module  and therefore provides a representation of $\mf{k}$. 

On $\overline{\mf{V}}\coloneqq\left\{ \left(v_{i}\right)\ \vert\ v_{i}\in \mf{V}_{i}\right\}$, the formal completion of $\tV$, there exists a natural, faithful $\mf{N}\left(\mathbb{K}[[u]]\right)$-action. Furthermore, $\overline{\mf{V}}$ is the inverse limit of the quotients $\tV / \tV^{(N)}$ which  shows that $\mf{k}$ is residually finite-dimensional.
\end{thm}

\renewcommand*{\thethm}{\arabic{thm}}
\setcounter{thm}{0}
The paper is organised as follows: In section \ref{sec:aff} we fix our notation for the involutory subalgebra $\mf{k}$ of an untwisted affine Kac--Moody algebra. We construct the parabolic analogues $\mf{N}\left(\mathbb{K}[[u]]\right)$ and $\mf{N}\left(\mathbb{P}_N\right)$ of $\mf{k}$ over different rings in section \ref{sec:par} and show that there exist homomorphisms from $\mf{k}$ to them. Furthermore we establish that the homomorphism from $\mf{k}$ to $\mf{N}\left(\mathbb{K}[[u]]\right)$ is injective and thus proving theorem \ref{thm:thm_A}. In section \ref{sec:verm}  we take a slight detour in first constructing induced representations of $\mf{N}\left(\mathbb{K}[u]\right)$ from representations of its classical subalgebra $\khor$. Although there is no homomorphism from $\mf{k}$ to $\mf{N}\left(\mathbb{K}[u]\right)$ we will show that these induced representations can be \emph{truncated} to provide finite-dimensional representations of $\mf{N}\left(\mathbb{P}_N\right)$ and $\mf{k}$. We will conclude section \ref{sec:verm} by proving that the inverse limit of these representations provides a fatihful infinite-dimensional representation of $\mf{k}$, thus proving theorem~\ref{thm:thm_B}. We specialise our results to the case $\mf{k}\left(\mf{e}_9\right)$ in section \ref{sec:Specialisation} where we also describe the problem of analysing the finite-dimensional representations' structure in more detail.

\vspace{1cm} 
\noindent
 {\bf Acknowledgments:} 
 We thank Benedikt K\"onig for comments on the manuscript.
 The work of  H.N. has received funding from the European Research 
 Council (ERC) under the  European Union's Horizon 2020 research and 
 innovation programme (grant agreement No 740209). The work of R.K. and R.L. has received funding from the Deutsche Forschungsgemeinschaft (DFG) via the grant KO 4323/13-2. The work of R.L. has received funding from the Studienstiftung des deutschen Volkes. Furthermore, R.K. and R.L. gratefully acknowledge the hospitality of the Max-Planck-Gesellschaft during several extended visits at the Max-Planck-Institute for Gravitational Physics at Golm, Potsdam.

\section{Affine Kac--Moody algebras and Cartan--Chevalley involution}
\label{sec:aff}

Let $A$ be an indecomposable generalised Cartan matrix of untwisted affine type and let $\mathbb{K}$ denote $\mathbb{R}$ or $\mathbb{C}$, see~\cite[\S4]{Kac1990} for a complete list of the associated diagrams $\mathcal{D}(A)$.
The Kac--Moody algebra $\mathfrak{g}\coloneqq \mathfrak{g}(A)\left(\mathbb{K}\right)$
can be constructed explicitly as an extension of the loop algebra
$\mathfrak{L}\left(\mathring{\mathfrak{g}}\right)$, where $\mathring{\mathfrak{g}}$
denotes the classical Lie-subalgebra $\mathring{\mathfrak{g}}<\mathfrak{g}(A)\left(\mathbb{K}\right)$
that one obtains by deleting the affine node in the generalised Dynkin
diagram $\mathcal{D}(A)$. Denote by $\omega$ the Cartan--Chevalley
involution on $\mathfrak{g}(A)\left(\mathbb{K}\right)$~\cite[\S1]{Kac1990} 
and set 
\begin{equation}
\mathfrak{k}\coloneqq \mathfrak{k}(A)\left(\mathbb{K}\right)=\mathrm{Fix}_{\omega}\left(\mathfrak{g}(A)\left(\mathbb{K}\right)\right).
\end{equation}
$\mathfrak{k}(A)\left(\mathbb{K}\right)$ is called the
maximal compact subalgebra of $\mathfrak{g}(A)\left(\mathbb{K}\right)$ since the restriction of the standard invariant bilinear form on $\mf{g}$ to $\mf{k}$ is (negative) definite.
Let us start with a description of $\mathfrak{k}$ that is adapted
to the presentation of $\mathfrak{g}$ in terms of the loop algebra
$\mathfrak{L}\left(\mathring{\mathfrak{g}}\right)$. In section
\ref{sec:Specialisation}, we will complement this by a collection
of correspondences to the basis described in \cite[\S4]{Kleinschmidt:2006dy} for the particular case $\mf{g}=\mf{e}_9$, the affine extension of $\mf{e}_8$.

Denote by $\mathring{\Delta}$ the root system of $\mathring{\mathfrak{g}}$
and its Cartan--Chevalley involution\footnote{Note that the abstract Cartan--Chevalley involution of $\mathring{\mathfrak{g}}=\mathfrak{g}(\mathring{A})$
as a Kac--Moody algebra of finite type agrees with the restriction
of $\omega:\mathfrak{g}\rightarrow\mathfrak{g}$ to $\mathring{\mathfrak{g}}<\mathfrak{g}$
because the Dynkin diagram of $\mathring{\mathfrak{g}}$ is
a sub-diagram of $\mathcal{D}\left(A\right)$ due to our assumption that $A$ is untwisted.} by $\mathring{\omega}$. The $\pm1$ eigenspaces of $\mathring{\omega}$
provide the Cartan decomposition $\mathring{\mathfrak{g}}=\mathring{\mathfrak{k}}\oplus\mathring{\mathfrak{p}}$.
In terms of a Cartan basis $\big\{ E_{\gamma}\ \vert\ \gamma\in\mathring{\Delta}\big\} \cup\big\{ h_{1},\dots,h_{d}\in\mathring{\mathfrak{h}}\big\} $,
this decomposition is realised as 
\begin{equation}
\mathring{\mathfrak{k}}=\text{span}\left\{ E_{\gamma}-E_{-\gamma}\ \vert\ \gamma\in\mathring{\Delta}\right\} ,\ \quad\mathring{\mathfrak{p}}=\text{span}\left(\left\{ E_{\gamma}+E_{-\gamma}\ \vert\ \gamma\in\mathring{\Delta}\right\} \cup\mathring{\mathfrak{h}}\right),
\end{equation}
where $\mathring{\mathfrak{h}}$ denotes the Cartan subalgebra
of $\mathring{\mathfrak{g}}$. Recall that $\mathring{\mathfrak{p}}$
is a $\mathring{\mathfrak{k}}$-module but not a Lie-algebra
since $\left[\mathring{\mathfrak{p}},\mathring{\mathfrak{p}}\right]=\mathring{\mathfrak{k}}$.
 Denote by $\mathfrak{L}$ the ring of Laurent polynomials over $\mathbb{K}$. Then
the loop algebra of $\mathfrak{L}\left(\mathring{\mathfrak{g}}\right)$
is given by the tensor product $\mathfrak{L}\otimes\mathring{\mathfrak{g}}$
with the commutator given by the bilinear extension of
\begin{equation}
\left[P\otimes x,Q\otimes y\right]=\left(PQ\right)\otimes\left[x,y\right],
\end{equation}
where $\left[\cdot,\cdot\right]$ on the right-hand side denotes the bracket on $\mathring{\mathfrak{g}}$. 
As $t^{n}$, $t^{-n}$ (for $n\in\mathbb{N}=\{0,1,\ldots\}$) span $\mathfrak{L}$, the Lie algebra $\mathfrak{L}\left(\mathring{\mathfrak{g}}\right)$ is spanned by
 $\left\{ t^{\pm n}\otimes E_{\gamma}\ \vert \ \gamma\in \mathring{\Delta}\right\} \cup \left\{ t^{\pm n}\otimes h\ \vert \ h\in\mathring{\mf{h}}\right\}$ for $n\in \mathbb{N}$. 

 Now (see
\cite[\S7]{Kac1990} or \cite{Goddard:1986bp})
\begin{align}
\label{eq:aff}
\mathfrak{g}=\mathfrak{L}\left(\mathring{\mathfrak{g}}\right)\oplus\mathbb{K}\cdot K\oplus\mathbb{K}\cdot d
\end{align}
with $K,d\in\mathfrak{h}$ so that $\mathfrak{k}\subset\mathfrak{L}\left(\mathring{\mathfrak{g}}\right)$
because $\omega\left(h\right)=-h$ $\forall\,h\in\mathfrak{h}$. Thus,
in order to describe $\mf{k}= \mathfrak{k}(A)\left(\mathbb{K}\right)$
we only need to study the loop algebra $\mathfrak{L}\left(\mathring{\mathfrak{g}}\right)$
and in particular do not need to consider the central extension of
$\mathfrak{L}\left(\mathring{\mathfrak{g}}\right)$ by a 2-cocycle, giving rise to $K$. The Cartan--Chevalley involution restricted to $\mathfrak{L}\left(\mathring{\mathfrak{g}}\right)$
is given by the linear extension of 
\begin{equation}
\omega:\,P\left(t\right)\otimes x\mapsto P\left(t^{-1}\right)\otimes\mathring{\omega}\left(x\right)\ \forall\,P\in\mathfrak{L}.
\end{equation}
Denote by $\mathfrak{L}^{\pm}$ the $\pm1$ eigenspaces of $\mathfrak{L}$
under the involution $\eta:\,t\mapsto t^{-1}$. Observe that evaluation of Laurent polynomials at $t=\pm1$ is the only evaluation map that commutes with $\eta$ which is why we refer to $t=\pm1$ as the fixed points of $\eta$. The fixed point set of $\omega$ can be constructed from $\mathfrak{L}^{\pm}$, $\mathring{\mathfrak{k}}$ and $\mathring{\mathfrak{p}}$:
\begin{align*}
\omega\left(P\otimes x\right)=P\otimes x\quad \forall\,P\in\mathfrak{L}^{+},x\in\mathring{\mathfrak{k}},\quad\quad\omega\left(Q\otimes y\right)=Q\otimes y\quad \forall\,Q\in\mathfrak{L}^{-},y\in\mathring{\mathfrak{p}}
\end{align*}
From this we arrive at the following explicit realisation of $\mf{k}$:
\begin{equation}
\label{eq:kdef}
\Rightarrow\ \mathfrak{k}\left(A\right)\left(\mathbb{K}\right)=\left(\mathfrak{L}^{+}\otimes\mathring{\mathfrak{k}}\right)\oplus\left(\mathfrak{L}^{-}\otimes\mathring{\mathfrak{p}}\right).
\end{equation} 

\begin{rem}
\begin{enumerate}
\item[(i)]
The Laurent polynomials in $\mf{L}^\pm$ are spanned by $t^n\pm t^{-n}$ for $n\in \mathbb{N}$.  The product of two such basis Laurent polynomials is for example
\begin{align}
\label{eq:filt}
(t^m + t^{-m}) (t^n - t^{-n}) = \left( t^{m+n}- t^{-(m+n)} \right) - \mathrm{sgn}(m-n) \left( t^{|m-n|} - t^{-|m-n|}\right)\,.
\end{align}
We shall refer to this as a \textit{filtered} structure on $\mf{L}$ that by~\eqref{eq:kdef} induces a filtered structure on $\mf{k}$.
\item[(ii)]
The construction in (\ref{eq:kdef}) can be generalised to the case where $\mf{L}$ denotes a finitely generated commutative $\mathbb{K}$-algebra with involution and $\mf{L}^{\pm}$ denote the $\pm1$ eigenspaces with respect to this involution. This produces an analogue of maximal compact subalgebras to the case where $\mf{g}$ is a generalised current algebra. However, our interest lies in studying different models of $\mf{k}$. In particular, we will explore the relation of (\ref{eq:kdef}) to the cases $\mf{L}=\mathbb{K}[[u]]$, $\mathbb{K}[u]$ and $\mathbb{K}[u]\diagup \mathcal{I}_N$, where $\mathcal{I}_N$ is the ideal generated by the monomial $u^{N+1}$.
\item[(iii)] 
It is well known that one can form the semi-direct sum of any affine Lie algebra with the Virasoro algebra of centrally extended infinitesimal diffeomorphisms of the circle~\cite{Kac1990,Goddard:1986bp}, where both central extensions are identified. This structure descends to $\mf{k}$ and a `maximal compact' subalgebra of the Virasoro algebra~\cite{Julia}. As this will play no role in our general analysis, we defer its discussion to section~\ref{sec:Specialisation}.
\item[(iv)] 
The Lie algebra $\mf{k}$ comes with a definite and invariant bilinear form that is inherited from $\mf{g}$. In appendix~\ref{app:compl}, we show that the Hilbert space completion with respect to this norm is not compatible with the Lie algebra structure to form a Hilbert Lie algebra.
\end{enumerate}
\end{rem}

\section{Reformulation in terms of parabolic algebras}
\label{sec:par}

In this section, we construct a Lie algebra monomorphism from $\mathfrak{k}$ to a larger Lie algebra by replacing Laurent polynomials with formal power series. This corresponds to an expansion around the fixed points $t=\pm 1$ of $\eta$ instead of $0$ as described in \cite{Nicolai:2004nv, Kleinschmidt:2006dy}.

Denote by $\mathbb{P}\coloneqq \mathbb{K}\left[\left[u\right]\right]$ the
ring of formal power series together with the involutive ring automorphism
$\eta:u^{k}\mapsto(-1)^{k}u^{k}$. We refer to a formal power series
in $\mathbb{P}$ either as $\sum_{k=0}^{\infty}a_{k}u^{k}$ or just
as $\left(a_{k}\right)_{k\geq0}$ with $a_{k}\in\mathbb{K}$. Denote
the $\pm1$ eigenspaces of $\mathbb{P}$ with respect to $\eta$ by
$\mathbb{P}^{\pm}$, i.e., the formally even/odd power series in $u$. One has
\begin{equation*}
\mathbb{P}^{+}\coloneqq \left\{ \left(a_{k}\right)_{k\in\mathbb{N}_0}\ \vert\ a_{2k}\in\mathbb{K},\,a_{2k+1}=0\ \forall\,k\in\mathbb{N}\right\} ,
\end{equation*}
\begin{equation*}
\mathbb{P}^{-}\coloneqq\left\{ \left(a_{k}\right)_{k\in\mathbb{N}_0}\ \vert\ a_{2k+1}\in\mathbb{K},\,a_{2k}=0\ \forall\,k\in\mathbb{N}\right\} .
\end{equation*}
We mimic the loop algebra construction by setting
\begin{equation}
\label{eq:Ndef}
\mathfrak{N}\left(\mathbb{K}[[u]]\right)\coloneqq \Big(\mathbb{P}^{+}\otimes\mathring{\mathfrak{k}}\Big)\oplus\Big(\mathbb{P}^{-}\otimes\mathring{\mathfrak{p}}\Big),\quad \left[P\otimes x,Q\otimes y\right]=\left(P\cdot Q\right)\otimes\left[x,y\right],
\end{equation}
where the bracket on the right-hand side again denotes the $\mathring{\mathfrak{g}}$-bracket.
Recall that multiplication in the ring of formal power series is given
by convolution, i.e., for $P=\sum_{n=0}^{\infty}a_{n}u^{n}$ and $Q=\sum_{n=0}^{\infty}b_{n}u^{n}$
one has 
\begin{align}
\label{conv}
P\cdot Q=\sum_{n=0}^{\infty}c_{n}u^{n},\quad c_{n}=\sum_{k=0}^{n}a_{k}b_{n-k}.
\end{align}

\begin{rem}\label{rmk:ser}
One could also consider replacing $\mathbb{P}$ by
the ring of polynomials $\mathbb{K}[u]$. In contrast to $\mathfrak{L}^{\pm}$
which have a filtered structure, $\mathbb{K}[u]$ has a gradation given
by the degree of polynomials. This extends to a gradation on $\big(\mathbb{K}[u]^{+}\otimes\mathring{\mathfrak{k}}\big)\oplus\left(\mathbb{K}[u]^{-}\otimes\mathring{\mathfrak{p}}\right)$.
If we were to construct a monomorphism from $\mf{k}$ using only $\mathbb{K}[u]$
one would expect to be able to pull this gradation back to $\mathfrak{k}\left(A\right)\left(\mathbb{K}\right)$.
Since we doubt this to be possible we work over $\mathbb{K}\left[\left[u\right]\right]$ which arises
as the formal completion of $\mathbb{K}[u]$. There we do not have
a gradation by degree of polynomials any more in the proper sense
because for this any element in $\mathbb{K}\left[\left[u\right]\right]$
would have to decompose into the sum of finitely many homogeneous
elements. This is true for polynomials but not for formal power series.
Thus, we sacrifice a graded structure in order to achieve injectivity. However, in section \ref{sec:verm} we will consider this case as a preliminary step.
\end{rem}

As the power series $\mathbb{K}[[u]]$ are associated with the expansion around the fixed point $t=\pm 1$ rather than $t=0$, we construct a homomorphism by relating the expansion via a (M\"obius-type) transformation 
\begin{align}
\label{eq:Moeb}
u= \frac{1\mp t}{1\pm t} 
\quad\Leftrightarrow\quad
t= \pm \frac{1-u}{1+u}
\end{align}
through the Taylor series ($n\geq 0$)
\begin{align}
\label{eq:Tayl}
t^n + t^{-n} = (\pm 1)^n \sum_{k\geq 0} a_{2k}^{(n)} u^{2k}\,,
\hspace{10mm}
t^n - t^{-n} = (\pm 1)^n\sum_{k\geq 0} a_{2k+1}^{(n)} u^{2k+1}\,.
\end{align}

 The filtered multiplication of the Laurent polynomials in $\mf{L}^{\pm}$ is then captured by the following lemma.

\begin{lem}\label{lem1}
For each $n\in\mathbb{N}$ the sequences $\left(a_{2k}^{(n)}\right)_{k\in\mathbb{N}}$
and  $\left(a_{2k+1}^{(n)}\right)_{k\in\mathbb{N}}$ given by
given by
\begin{equation}
a_{2k}^{(n)}=2 \sum_{\ell=0}^{n}\begin{pmatrix}2n\\
2\ell
\end{pmatrix}\begin{pmatrix}k-\ell+n-1\\
k-\ell
\end{pmatrix},\quad a_{2k+1}^{(n)}=-2 \sum_{\ell=0}^{ n-1 }\begin{pmatrix}2n\\
2\ell+1
\end{pmatrix}\begin{pmatrix}k-\ell+n-1\\
k-\ell
\end{pmatrix}\label{eq:coeffs}
\end{equation}
satisfy
\begin{subequations}
\label{eq:convo}
\begin{align}
\sum_{\ell=0}^{k}a_{2\ell}^{(m)}a_{2(k-\ell)}^{(n)} & =  a_{2k}^{(m+n)}+a_{2k}^{(\vert m-n\vert)}\label{eq:convo1}\\
\sum_{\ell=0}^{k}a_{2\ell}^{(m)}a_{2(k-\ell)+1}^{(n)} & =  a_{2k+1}^{(m+n)}+\mathrm{sgn}(n-m)a_{2k+1}^{(\vert m-n\vert)}\label{eq:convo2}\\
\sum_{\ell=0}^{k-1}a_{2\ell+1}^{(m)}a_{2(k-\ell)-1}^{(n)} & =  a_{2k}^{(m+n)}-a_{2k}^{(\vert m-n\vert)}.\label{eq:convo3}
\end{align}
\end{subequations}
The coefficients $(-1)^{n}a_{2k}^{(n)}$ and $(-1)^{n}a_{2k+1}^{(n)}$
also satisfy eqs. (\ref{eq:convo1}), (\ref{eq:convo2}) and (\ref{eq:convo3}). Furthermore, for fixed $n\in\mathbb{N}^*=\{1,2,\ldots\}$, the values of $a_{2k}^{(n)}$ and $a_{2k+1}^{(n)}$ are given by polynomials  in $k$ of degree $n-1$ for all $k\in\mathbb{N}^*$.
\end{lem}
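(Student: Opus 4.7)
The plan is to derive both the explicit coefficient formulas and the convolution identities as direct consequences of the M\"obius substitution (\ref{eq:Moeb}): the identities (\ref{eq:convo1})--(\ref{eq:convo3}) are nothing but the multiplication rule (\ref{eq:filt}) for the Laurent polynomials $t^n\pm t^{-n}$, translated coefficient-wise into the $u$-variable via (\ref{eq:Tayl}).

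First I would derive (\ref{eq:coeffs}). Choosing the $+$ branch in (\ref{eq:Moeb}) gives
\[
t^n\pm t^{-n}\;=\;\frac{(1-u)^{2n}\pm(1+u)^{2n}}{(1-u^2)^n}.
\]
The binomial theorem yields $(1-u)^{2n}+(1+u)^{2n}=2\sum_{\ell=0}^{n}\binom{2n}{2\ell}u^{2\ell}$ and $(1-u)^{2n}-(1+u)^{2n}=-2\sum_{\ell=0}^{n-1}\binom{2n}{2\ell+1}u^{2\ell+1}$, while the generalised binomial series gives $(1-u^2)^{-n}=\sum_{m\ge 0}\binom{m+n-1}{m}u^{2m}$. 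Taking the Cauchy product and reading off the coefficient of $u^{2k}$ (respectively $u^{2k+1}$) gives (\ref{eq:coeffs}); the upper limits on $\ell$ are inherited from the vanishing of the binomials in the numerator.

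Next, for the convolution identities I would expand the three Laurent-polynomial products $(t^m\pm t^{-m})(t^n\pm t^{-n})$ in the $u$-variable. Directly from (\ref{eq:filt}) and its $(\,\cdot - \cdot\,)(\,\cdot - \cdot\,)$ counterpart one has
\begin{align*}
(t^m+t^{-m})(t^n+t^{-n}) &= (t^{m+n}+t^{-(m+n)})+(t^{|m-n|}+t^{-|m-n|}),\\
(t^m+t^{-m})(t^n-t^{-n}) &= (t^{m+n}-t^{-(m+n)})+\mathrm{sgn}(n-m)\,(t^{|m-n|}-t^{-|m-n|}),\\
(t^m-t^{-m})(t^n-t^{-n}) &= (t^{m+n}+t^{-(m+n)})-(t^{|m-n|}+t^{-|m-n|}),
\end{align*}
the $\mathrm{sgn}(n-m)$ arising from $t^{m-n}-t^{-(m-n)}=\mathrm{sgn}(m-n)(t^{|m-n|}-t^{-|m-n|})$. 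Substituting (\ref{eq:Tayl}) the left-hand sides become Cauchy products in $u$; matching the coefficients of $u^{2k}$ and $u^{2k+1}$ yields precisely (\ref{eq:convo1})--(\ref{eq:convo3}). For the twist by $(-1)^n$ I would note that the $-$ branch of (\ref{eq:Moeb}) is just $t\mapsto -t$, which multiplies both $t^n+t^{-n}$ and $t^n-t^{-n}$ by $(-1)^n$; since $|m-n|\equiv m+n\pmod{2}$, the overall factor $(-1)^{m+n}$ appearing on the convolution side equals $(-1)^{|m-n|}$ on the right, so the identities carry over to the twisted sequences.

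Finally, for the polynomial claim I would rewrite
\[
\binom{k-\ell+n-1}{k-\ell}=\binom{k-\ell+n-1}{n-1}=\frac{1}{(n-1)!}\prod_{j=1}^{n-1}(k-\ell+j),
\]
which for each fixed $\ell$ is a polynomial in $k$ of degree $n-1$. For $k\in\mathbb{N}^{*}$ with $k<\ell\leq n$ one of the factors $k-\ell+j$ vanishes (since $1\leq\ell-k\leq n-1$), matching the combinatorial value $0$; hence the finite sum over $\ell$ in (\ref{eq:coeffs}) is a polynomial in $k$ of degree at most $n-1$ on $\mathbb{N}^{*}$, and a brief leading-coefficient computation using $\sum_{\ell=0}^{n}\binom{2n}{2\ell}=2^{2n-1}$ shows the degree is exactly $n-1$. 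The only genuinely non-routine step in the whole proof is the sign bookkeeping, especially the $\mathrm{sgn}(n-m)$ vs.\ $\mathrm{sgn}(m-n)$ distinction in (\ref{eq:convo2}) and the matching of overall $(-1)^{m+n}$ factors in the twisted case; everything else is standard generating-function manipulation.
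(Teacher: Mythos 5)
Your proposal is correct and follows essentially the same route as the paper's own proof: expand $t^n\pm t^{-n}$ as $\bigl((1-u)^{2n}\pm(1+u)^{2n}\bigr)/(1-u^2)^n$ via the binomial theorem and the generalised binomial series to obtain (\ref{eq:coeffs}), and deduce (\ref{eq:convo1})--(\ref{eq:convo3}) by matching Cauchy-product coefficients against the Laurent-polynomial products $(t^m\pm t^{-m})(t^n\pm t^{-n})$. You merely spell out in more detail the sign bookkeeping, the $(-1)^n$-twisted case, and the degree-$(n-1)$ polynomial claim (including the nonvanishing leading coefficient), all of which the paper leaves implicit.
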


\begin{rem}
 The  transformations~\eqref{eq:Moeb} have the property that they interchange 
 the points $0$ and $\infty$ (that are exchanged by the involution $t\leftrightarrow t^{-1}$) with the fixed points $\pm1$ of $\eta$. The maps $t\mapsto u(t)$ in~\eqref{eq:Moeb} are fixed uniquely by the requirement that $(0,\infty,+1,-1)\mapsto (+1,-1,0,\infty)$ and $(0,\infty,+1,-1)\mapsto (+1,-1,\infty,0)$, respectively.
\end{rem}

\begin{proof}
We first compute the Taylor series for the M\"obius transformation~\eqref{eq:Moeb} for $n\in \mathbb{N}$
\begin{align*}
t^n + t^{-n} &=  (\pm 1)^{n}\frac{2}{(1-u^2)^n}  \sum_{\ell=0}^{n} \binom{2n}{2\ell} u^{2\ell} = 2(\pm 1)^n \sum_{k\geq 0} \sum_{\ell=0}^n \binom{2n}{2\ell}\binom{n+k-1}{k} u^{2(k+\ell)}\,,
\end{align*}
from which the first formula in~\eqref{eq:coeffs} follows. The second identity in~\eqref{eq:coeffs} is deduced similarly from the expansion of $t^n - t^{-n}$.  The order of the polynomial follows from the binomial coefficient $\tbinom{n+k-\ell-1}{k-\ell}$.

The convolution properties~\eqref{eq:convo} follow from multiplying out
\begin{align*}
\left( t^m  + t^{-m} \right) \left( t^n + t^{-n}\right)  = t^{m+n} + t^{-(m+n)} + t^{|m-n|} + t^{-|m-n|}
\end{align*}
for~\eqref{eq:convo1} for the first one upon using~\eqref{conv}. The identities~\eqref{eq:convo2} and~\eqref{eq:convo3} follow in the same way by changing the appropriate signs.
\end{proof}

We collect some formulae concerning the coefficients $a_{2k}^{(n)}$ and $a_{2k+1}^{(n)}$. One has $a_0^{(n)}=2$ for all $n\in\mathbb{N}$. For $k>0$ the first few coefficients $a_{2k}^{(n)}$ are given by the following polynomials
in $k$ (note that these expressions are not valid for $k=0$): 
\begin{align}
a_{2k}^{(0)}=0,\quad a_{2k}^{(1)}=4,\quad a_{2k}^{(2)}=16k,\quad a_{2k}^{(3)}=32k^{2}+4,\quad a_{2k}^{(4)}=\frac{128}{3} k^{3}+\frac{64}{3} k,
\end{align}
while the first few coefficients $a_{2k+1}^{(n)}$ are given
by polynomials in $k$ as well:
\begin{align}
a_{2k+1}^{(1)}&=-4,\ a_{2k+1}^{(2)}=-16 k-8,\ a_{2k+1}^{(3)}=-32 k^{2}-32 k-12,\nn\\
 a_{2k+1}^{(4)}&=-\frac{128}{3} k^{3}-64 k^{2}-\frac{160}{3} k-16.
\end{align}

\medskip

With lemma~\ref{lem1} above one can now construct a Lie algebra homomorphism from $\mf{k}$ to $\mf{N}(\mathbb{K}[[u]])$, defined in~\eqref{eq:Ndef}, that is constructed using~\eqref{eq:Tayl}.
\begin{prop}
\label{prop:ansatz for hom}The linear map $\rho_{\pm}:\mathfrak{k}(A)\left(\mathbb{K}\right)\rightarrow\mathfrak{N}\left(\mathbb{K}[[u]]\right)$
defined by 
\begin{subequations}
\label{eq:homans}
\begin{align}
\left(t^{n}+t^{-n}\right)\otimes x  &\mapsto\left(\pm1\right)^{n}\sum_{k=0}^{\infty}a_{2k}^{(n)}u^{2k}\otimes x\quad\forall\,x\in\mathring{\mathfrak{k}}\label{eq:ansatz for k}\\
\left(t^{n}-t^{-n}\right)\otimes y  &\mapsto\left(\pm1\right)^{n}\sum_{k=0}^{\infty}a_{2k+1}^{(n)}u^{2k+1}\otimes y\quad\forall\,y\in\mathring{\mathfrak{p}}\label{eq:ansatz for p}
\end{align}
\end{subequations}
with $a_{2k}^{(n)}$, $a_{2k+1}^{(n)}$ as in (\ref{eq:coeffs}) extends
to a homomorphism of Lie algebras.
\end{prop}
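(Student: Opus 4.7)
The plan is to verify the homomorphism property $\rho_\pm([X,Y]) = [\rho_\pm(X),\rho_\pm(Y)]$ on a spanning set of $\mf{k}$. By~\eqref{eq:kdef} it suffices to check this on pure tensors of the form $(t^n + t^{-n})\otimes x$ with $x \in \khor$ and $(t^n - t^{-n})\otimes y$ with $y \in \phor$. Since the bracket in $\mf{k}$ decouples into a product in $\mf{L}$ and a bracket in $\ghor$, and since $[\khor,\khor]\subseteq\khor$, $[\khor,\phor]\subseteq\phor$ and $[\phor,\phor]\subseteq\khor$, there are exactly three cases to treat, corresponding to the three filtered product identities
\begin{align*}
(t^m+t^{-m})(t^n+t^{-n}) &= (t^{m+n}+t^{-(m+n)}) + (t^{|m-n|}+t^{-|m-n|}),\\
(t^m+t^{-m})(t^n-t^{-n}) &= (t^{m+n}-t^{-(m+n)}) - \mathrm{sgn}(m-n)(t^{|m-n|}-t^{-|m-n|}),\\
(t^m-t^{-m})(t^n-t^{-n}) &= (t^{m+n}+t^{-(m+n)}) - (t^{|m-n|}+t^{-|m-n|}),
\end{align*}
as in~\eqref{eq:filt}.

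For each of these three cases, I would apply $\rho_\pm$ to the left-hand side of the corresponding bracket in $\mf{k}$ using~\eqref{eq:homans}. This yields a formal power series whose coefficients are sums of the form $a_{2k}^{(m+n)} \pm a_{2k}^{(|m-n|)}$ or $a_{2k+1}^{(m+n)} \pm \mathrm{sgn}(m-n)a_{2k+1}^{(|m-n|)}$, multiplied by the appropriate $\ghor$-bracket. Separately, I would compute the bracket $[\rho_\pm(X),\rho_\pm(Y)]$ in $\mf{N}(\mathbb{K}[[u]])$ directly from~\eqref{eq:Ndef}, where the product of two power series is the Cauchy convolution~\eqref{conv}. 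Matching the two expressions coefficient-by-coefficient in $u$, the three resulting identities are precisely eqs.~\eqref{eq:convo1}, \eqref{eq:convo2} and~\eqref{eq:convo3} of Lemma~\ref{lem1}, which have already been established.

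The remaining bookkeeping concerns the global signs $(\pm 1)^n$ appearing in~\eqref{eq:homans}. In a bracket of two generators labelled by $m$ and $n$, the product of the prefactors on the right-hand side of $[\rho_\pm(X),\rho_\pm(Y)]$ is $(\pm 1)^{m+n}$, while on the image of the bracket one gets the two prefactors $(\pm 1)^{m+n}$ and $(\pm 1)^{|m-n|}$. Since $m+n$ and $|m-n|$ have the same parity, both are equal to $(\pm 1)^{m+n}$ and the signs match; in the ``$-$'' choice of the sign in~\eqref{eq:Moeb}, one uses precisely the statement in Lemma~\ref{lem1} that $(-1)^{n}a_{2k}^{(n)}$ and $(-1)^{n}a_{2k+1}^{(n)}$ also satisfy the convolution identities. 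Degenerate boundary cases (e.g.\ $m=n$, where both $t^{|m-n|}-t^{-|m-n|}=0$ and $\mathrm{sgn}(m-n)=0$, or $n=0$, where $a_{2k+1}^{(0)}=0$) are consistent with both sides vanishing simultaneously.

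I expect the main obstacle to be purely organisational rather than conceptual: one must carry three separate sign bookkeepings (the $\pm$ from the M\"obius choice, the $\mathrm{sgn}(m-n)$ from the filtered multiplication, and the parity combining $m+n$ with $|m-n|$) and align them so that the three convolution identities of Lemma~\ref{lem1} apply verbatim. Once this accounting is written out carefully for the three bracket types above, the homomorphism property is immediate and linear extension to all of $\mf{k}(A)(\mathbb{K})$ completes the proof.
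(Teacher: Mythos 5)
Your proposal is correct and follows essentially the same route as the paper: a case-by-case check on the three bracket types, reducing each to the corresponding convolution identity \eqref{eq:convo1}--\eqref{eq:convo3} of Lemma~\ref{lem1}, with the $(\pm1)^n$ prefactors handled by the parity agreement of $m+n$ and $|m-n|$. The paper writes out only the $\khor$--$\khor$ case explicitly and notes the others are analogous (using $a_0^{(n)}=2$), so your additional sign bookkeeping is just a more explicit version of the same argument.
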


\begin{proof}
This can be checked case by case for pairs $(a,b)\in\big(\mathfrak{L}^{+}\otimes\mathring{\mathfrak{k}}\big)\times\big(\mathfrak{L}^{+}\otimes\mathring{\mathfrak{k}}\big),\,$$\big(\mathfrak{L}^{+}\otimes\mathring{\mathfrak{k}}\big)\times\left(\mathfrak{L}^{-}\otimes\mathring{\mathfrak{p}}\right)$,
$\left(\mathfrak{L}^{-}\otimes\mathring{\mathfrak{p}}\right)\times\left(\mathfrak{L}^{-}\otimes\mathring{\mathfrak{p}}\right)$
and extending the result by (bi-)linearity. We demonstrate this for the first pair, the other pairs work analogously.

First, set $x^{(n)}\coloneqq\left(t^{n}+t^{-n}\right)\otimes x$
for all $x\in\mathring{\mathfrak{k}}$.
Then by definition
\begin{equation*}
\left[x_{1}^{(m)},x_{2}^{(n)}\right] =  \left(t^{m+n}+t^{-(m+n)}\right)\otimes\left[x_{1},x_{2}\right]+\left(t^{|m-n|}+t^{-|m-n|}\right)\otimes\left[x_{1},x_{2}\right],
\end{equation*}
so that from~\eqref{eq:ansatz for k}
\[
\rho_{\pm}\left(\left[x_{1}^{(m)},x_{2}^{(n)}\right]\right)=\left(\pm1\right)^{m+n}\sum_{k=0}^{\infty}\left(a_{2k}^{(m+n)}+a_{2k}^{(\vert m-n\vert)}\right)u^{2k}\otimes\left[x_{1},x_{2}\right].
\]
On the other hand 
\begin{align*}
\left[\rho\left(x_{1}^{(m)}\right),\rho\left(x_{2}^{(n)}\right)\right] & =  \left(\pm1\right)^{m+n}\sum_{k=0}^{\infty}\sum_{\ell=0}^{k}a_{2(k-\ell)}^{(m)}a_{2\ell}^{(n)}u^{2k}\otimes\left[x_{1},x_{2}\right]\\
 & =  \left(\pm1\right)^{m+n}\sum_{k=0}^{\infty}\left(a_{2k}^{(m+n)}+a_{2k}^{(\vert m-n\vert)}\right)u^{2k}\otimes\left[x_{1},x_{2}\right],
\end{align*}
by (\ref{eq:convo1}), proving the claim. For the other pairs one proceeds similarly, using in particular $a_{0}^{(n)}=2$ for all $n$, and 
this shows that the map $\rho_{\pm}$ extends
to a homomorphism of Lie algebras.
\end{proof}

It is now possible to introduce a cutoff in  (\ref{eq:homans}) in order to make all expressions finite sums.
Denote by 
\begin{align}
\label{eq:idN}
\mathcal{I}_N\coloneqq\left(u^{N+1}\right)
\end{align}
the ideal in $\mathbb{K}[[u]]$ that is generated by the element $u^{N+1}$ and set $\mathbb{P}_{N}\coloneqq\mathbb{K}[[u]]\diagup\mathcal{I}_N$. Furthermore, define the even and odd parts as
\[
\mathbb{P}_{N}^{+}\coloneqq\text{span}_{\mathbb{K}}\left\{ u^{2k}+\mathcal{I}_N\vert k=0,\dots,\left\lfloor \frac{N}{2}\right\rfloor\right\} ,\ \mathbb{P}_{N}^{-}\coloneqq\text{span}_{\mathbb{K}}\left\{ u^{2k+1}+\mathcal{I}_N\vert k=\left\lfloor \frac{N-1}{2}\right\rfloor\right\} 
\]
and consider the Lie algebra 
\begin{align}
\label{eq:NNdef}
\mathfrak{N}\left(\mathbb{P}_N\right)\coloneqq\left(\mathbb{P}_{N}^{+}\otimes\mathring{\mathfrak{k}}\right)\oplus\left(\mathbb{P}_{N}^{-}\otimes\mathring{\mathfrak{p}}\right), \quad \left[P\otimes x,Q\otimes y\right]=\left(P\cdot Q\right)\otimes\left[x,y\right]
\end{align}
where the bracket on the right-hand side still denotes the $\mathring{\mathfrak{g}}$-bracket.

\begin{rem}
Note that the ring of formal power series $\mathbb{P}$ is isomorphic to the inverse limit of the inverse system $\left(\mathbb{P}_N\right)_{N\in\mathbb{N}}$ with the obvious bonding maps~\cite{Rotman}. 
As a consequence $\mathfrak{N}\left(\mathbb{K}[[u]]\right)$ is the inverse limit of the $\mathfrak{N}\left(\mathbb{P}_N\right)$:
\begin{align}
\mf{N}(\mathbb{K}[[u]]) = \lim_{\substack{\longleftarrow \\ N\to \infty}} \mf{N}(\mathbb{P}_N).
\end{align}
In the following we will construct homomorphisms $\rho_{\pm}^{(N)}:\mathfrak{k}(A)\left(\mathbb{K}\right)\rightarrow\mathfrak{N}\left(\mathbb{P}_N\right)$ which are then shown to be surjective. As $\rho_{\pm}^{(N)}$ is constructed via $\rho_{\pm}$ and the natural projection $\pi_N$ from $\mathfrak{N}\left(\mathbb{K}[[u]]\right)$ to $\mathfrak{N}\left(\mathbb{P}_N\right)$ the $\rho_{\pm}^{(N)}$ are compatible with the natural projections  from $\mathfrak{N}\left(\mathbb{P}_{N+M}\right)$ to $\mathfrak{N}\left(\mathbb{P}_N\right)$. Assuming we had started with the $\rho_\pm^{(N)}$ then by the universal property of inverse limits we would have obtained $\rho_\pm$ as the unique homomorphism $\rho_\pm: \mathfrak{k}(A)\left(\mathbb{K}\right)\rightarrow\mathfrak{N}\left(\mathbb{K}[[u]]\right)$ such that $\rho_\pm^{(N)}=\rho_\pm\circ \pi_N$. 
\end{rem}

\begin{cor}
\label{cor:truncated hom}The homomorphisms of Lie algebras $\rho_{\pm}:\mathfrak{k}(A)\left(\mathbb{K}\right)\rightarrow\mathfrak{N}\left(\mathbb{K}[[u]]\right)$
induce homomorphisms $\rho_{\pm}^{(N)}:\mathfrak{k}(A)\left(\mathbb{K}\right)\rightarrow\mathfrak{N}\left(\mathbb{P}_N\right)$
which are given explicitly by
\begin{align}
\left(t^{n}+t^{-n}\right)\otimes x  &\mapsto\left(\pm1\right)^{n}\sum_{k=0}^{\left\lfloor N/2\right\rfloor }a_{2k}^{(n)}\left(u^{2k}+\mathcal{I}_N\right)\otimes x\quad\forall\,x\in\mathring{\mathfrak{k}}\\
\left(t^{n}-t^{-n}\right)\otimes y  &\mapsto\left(\pm1\right)^{n}\sum_{k=0}^{\left\lfloor (N-1)/2\right\rfloor }a_{2k+1}^{(n)}\left(u^{2k+1}+\mathcal{I}_N\right)\otimes y\quad\forall\,y\in\mathring{\mathfrak{p}}.
\end{align}
\end{cor}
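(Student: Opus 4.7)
The plan is to realise each $\rho_\pm^{(N)}$ as the composition of $\rho_\pm$ with a canonical Lie algebra projection $\text{Pr}_N\colon \mathfrak{N}(\mathbb{K}[[u]])\to \mathfrak{N}(\mathbb{P}_N)$, exactly as already suggested in the remark preceding the corollary, and then to read off the stated explicit formulae from Proposition~\ref{prop:ansatz for hom}.

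First I would verify that the ideal $\mathcal{I}_N=(u^{N+1})\subset \mathbb{K}[[u]]$ is stable under the involution $\eta$, so that it decomposes compatibly with the eigenspace splitting as $\mathcal{I}_N=\mathcal{I}_N^{+}\oplus\mathcal{I}_N^{-}$ with $\mathcal{I}_N^{\pm}:=\mathcal{I}_N\cap\mathbb{P}^{\pm}$. Consequently the quotient ring map $\mathrm{pr}_N\colon \mathbb{K}[[u]]\to \mathbb{P}_N$ restricts to linear surjections $\mathbb{P}^{\pm}\to \mathbb{P}_N^{\pm}$ whose joint kernel is exactly $\mathcal{I}_N^{+}\oplus \mathcal{I}_N^{-}$.

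Next I would upgrade this to the map $\text{Pr}_N\colon \mathfrak{N}(\mathbb{K}[[u]])\to \mathfrak{N}(\mathbb{P}_N)$, defined on simple tensors by $P\otimes x\mapsto \mathrm{pr}_N(P)\otimes x$ and extended linearly. The fact that $\mathrm{pr}_N$ is a ring homomorphism, together with the definitions of the brackets on $\mathfrak{N}(\mathbb{K}[[u]])$ and $\mathfrak{N}(\mathbb{P}_N)$ through multiplication in $\mathbb{P}$ and $\mathbb{P}_N$ respectively, immediately yields
\[
\text{Pr}_N\bigl([P\otimes x,Q\otimes y]\bigr)=\bigl[\text{Pr}_N(P\otimes x),\text{Pr}_N(Q\otimes y)\bigr],
\]
so $\text{Pr}_N$ is a Lie algebra homomorphism onto $\mathfrak{N}(\mathbb{P}_N)$.

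Setting $\rho_\pm^{(N)}:=\text{Pr}_N\circ \rho_\pm$ then produces a Lie algebra homomorphism as a composition of such. To obtain the explicit formulae one simply applies $\text{Pr}_N$ to the images of the generators under $\rho_\pm$ given in \eqref{eq:homans}: since $\mathrm{pr}_N(u^k)=0$ whenever $k\geq N+1$, all terms in the infinite sums with $2k>N$ (respectively $2k+1>N$) drop out, leaving exactly the truncated sums over $0\leq k\leq \lfloor N/2\rfloor$ and $0\leq k\leq \lfloor (N-1)/2\rfloor$ stated in the corollary. No serious obstacle is expected, since every step is a direct application of either the universal property of the quotient ring $\mathbb{P}_N$ or of the definitions already introduced; the only minor book-keeping is the $\eta$-equivariance of $\mathrm{pr}_N$, which reduces to the observation that $u^{N+1}$ is itself homogeneous with respect to $\eta$.
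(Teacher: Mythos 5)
Your proposal is correct and follows essentially the same route as the paper, which simply observes that $\mathfrak{N}(\mathbb{P}_N)$ is a quotient of $\mathfrak{N}(\mathbb{K}[[u]])$ because $\mathbb{P}_N=\mathbb{K}[[u]]\diagup\mathcal{I}_N$ is a quotient of $\mathbb{P}$; you merely spell out the details (the $\eta$-stability of $\mathcal{I}_N$, the compatibility of $\mathrm{Pr}_N$ with the brackets, and the truncation of the sums) that the paper leaves implicit.
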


\begin{proof}
$\mathfrak{N}\left(\mathbb{P}_N\right)$ is a quotient of $\mathfrak{N}\left(\mathbb{K}[[u]]\right)$
because $\mathbb{P}_{N}=\mathbb{K}[[u]]\diagup\mathcal{I}_N$
is a quotient of $\mathbb{P}$.
\end{proof}
Next, we want to show that the homomorphisms $\rho_{\pm}$ are injective
but not surjective. Towards this we will need the following fact about
matrices whose entries are obtained from evaluation of polynomials:
\begin{lem}
\label{lem:Linear independence of evaluated polynomails}Let $0\neq p_{1},\dots,p_{n}\in\mathbb{K}[t]$
be linearly independent polynomials. Then there exist $N_{1},\dots,N_{n}\in\mathbb{N}$
such that the matrix $\mathcal{M}\left(N_{1},\dots,N_{n}\right)\coloneqq\left(p_{i}\left(N_{j}\right)\right)_{i,j=1}^{n}$
is regular.
\end{lem}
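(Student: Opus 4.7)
\medskip

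\noindent\textbf{Proof proposal.} The plan is to proceed by induction on $n$, using a cofactor expansion to reduce the problem at stage $n$ to finding a point where a certain nonzero polynomial (built from the smaller Vandermonde-type matrix) does not vanish.

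For the base case $n=1$, the polynomial $p_{1}\neq 0$ has only finitely many zeros, and since $\mathbb{N}$ is infinite we can pick $N_{1}\in\mathbb{N}$ with $p_{1}(N_{1})\neq 0$, so the $1\times 1$ matrix $(p_{1}(N_{1}))$ is regular.

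For the inductive step, assume the statement holds for any collection of $n-1$ linearly independent polynomials. Given linearly independent $p_{1},\dots,p_{n}$, note that $p_{1},\dots,p_{n-1}$ are in particular linearly independent, so by the inductive hypothesis there exist $N_{1},\dots,N_{n-1}\in\mathbb{N}$ such that the matrix $\mathcal{M}(N_{1},\dots,N_{n-1})=(p_{i}(N_{j}))_{i,j=1}^{n-1}$ is regular. Now consider the function
\begin{equation*}
f(x)\coloneqq \det\mathcal{M}(N_{1},\dots,N_{n-1},x),
\end{equation*}
obtained by plugging $x$ into the last column. Expanding the determinant along the last column yields
\begin{equation*}
f(x)=\sum_{i=1}^{n}(-1)^{i+n}\,M_{i}\,p_{i}(x),
\end{equation*}
where $M_{i}$ is the $(i,n)$-minor. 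Crucially, the coefficient $M_{n}$ of $p_{n}(x)$ equals $\det\mathcal{M}(N_{1},\dots,N_{n-1})\neq 0$ by the inductive hypothesis. Since $p_{1},\dots,p_{n}$ are linearly independent, the polynomial $f(x)$ is nonzero (if it were the zero polynomial, then all coefficients $M_{i}$ would have to vanish, contradicting $M_{n}\neq 0$). Hence $f$ has only finitely many zeros, and we may select $N_{n}\in\mathbb{N}$ with $f(N_{n})\neq 0$, which completes the induction.

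The argument is essentially routine once the cofactor-expansion trick is set up; the only subtle point is noting that linear independence of $p_{1},\dots,p_{n}$ as elements of $\mathbb{K}[t]$ forces the cofactor expansion to be a \emph{nonzero} polynomial in $x$, so that $f$ does not vanish identically on $\mathbb{N}$. No further obstacles arise, and the indices $N_{j}$ may be chosen distinct since at each stage only finitely many values need to be excluded from the infinite set $\mathbb{N}$.
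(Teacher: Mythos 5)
Your proof is correct and follows exactly the route the paper sketches: induction on $n$, cofactor expansion of the determinant along the last column to obtain a nontrivial linear combination of the linearly independent $p_i$, and the fact that a nonzero polynomial vanishes at only finitely many points. You have simply filled in the details (in particular, that the cofactor of $p_n(x)$ is $\det\mathcal{M}(N_1,\dots,N_{n-1})\neq 0$) that the paper leaves implicit.
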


\begin{proof}
This can be achieved by induction on $n$ and an expansion of the determinant which yields a linear combination of linearly independent polynomials. As a nonzero polynomial $p(t)$ is equal to $0$ for only finitely many $t\in\mathbb{K}$ this can be used to show regularity of $\mathcal{M}\left(N_{1},\dots,N_{n}\right)$ for suitable $N_1,\dots,N_n$.
\end{proof}
\begin{prop}
\label{prop:injectivity}
The homomorphism of Lie algebras $\rho_{\pm}:\mathfrak{k}\left(A\right)\left(\mathbb{K}\right)\rightarrow\mathfrak{N}\left(\mathbb{K}[[u]]\right)$
from proposition \ref{prop:ansatz for hom} is injective. Furthermore
its image does not contain elements in $\mathfrak{N}\left(\mathbb{K}[[u]]\right)=\left(\mathbb{P}^{+}\otimes\mathfrak{k}\right)\oplus\left(\mathbb{P}^{-}\otimes\mathfrak{p}\right)$
whose formal power series contain only finitely many nonzero coefficients.
In particular, the elements $u^{2k+2}\otimes x$ for $x\in\mathring{\mathfrak{k}}$
and $u^{2k+1}\otimes y$ for $y\in\mathring{\mathfrak{p}}$
and $k\geq0$ are not contained in the image of $\mathfrak{k}\left(A\right)\left(\mathbb{K}\right)$
in $\mathfrak{N}\left(\mathbb{K}[[u]]\right)$.
\end{prop}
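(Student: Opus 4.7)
The plan is to leverage the polynomial dependence of $a_{2k}^{(n)}$ and $a_{2k+1}^{(n)}$ on $k$ from Lemma~\ref{lem1}. Since $\rho_\pm$ respects the decomposition $\mathfrak{k}=(\mathfrak{L}^{+}\otimes\mathring{\mathfrak{k}})\oplus(\mathfrak{L}^{-}\otimes\mathring{\mathfrak{p}})$ and is $\mathbb{K}$-linear in the second tensor factor, I first reduce both claims to analysing the image of a finite sum $\xi=\sum_{n\geq 0} c_n(t^n+t^{-n})\otimes x$ for a fixed basis vector $x\in\mathring{\mathfrak{k}}$, and separately $\xi'=\sum_{n\geq 1}c_n(t^n-t^{-n})\otimes y$ for $y\in\mathring{\mathfrak{p}}$.

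For the $\mathring{\mathfrak{k}}$-slice, the coefficient of $u^{2k}$ in $\rho_\pm(\xi)$ is $C_k=\sum_{n\geq 0} c_n(\pm 1)^n a_{2k}^{(n)}$. Lemma~\ref{lem1} asserts that for $n\geq 1$ the function $k\mapsto a_{2k}^{(n)}$ is a polynomial in $k$ of exact degree $n-1$ valid for $k\geq 1$, while $a_{2k}^{(0)}=0$ for $k\geq 1$ and $a_{0}^{(0)}=2$. Hence for $k\geq 1$, $C_k=\sum_{n\geq 1} c_n(\pm 1)^n p_n(k)$ is a linear combination of polynomials in $k$ of pairwise distinct degrees $0,1,2,\ldots$, which are automatically linearly independent. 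For injectivity, if $\rho_\pm(\xi)=0$ then $C_k=0$ for all $k\geq 1$, so the polynomial identity $\sum_{n\geq 1} c_n(\pm 1)^n p_n=0$ forces every $c_n=0$ for $n\geq 1$; the residual equation $C_0=2c_0=0$ then yields $c_0=0$. Alternatively, one could invoke Lemma~\ref{lem:Linear independence of evaluated polynomails} to produce evaluation points making the relevant coefficient matrix regular and invert directly.

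For the second assertion, assume $\rho_\pm(\xi)$ has only finitely many nonzero $u$-coefficients. Then $C_k=0$ for all sufficiently large $k$; as $C_k$ is a polynomial in $k$ for $k\geq 1$ and vanishes at infinitely many integers, it must be the zero polynomial, and the linear independence argument again forces $c_n=0$ for $n\geq 1$. Consequently $\rho_\pm(\xi)=2c_0\cdot 1\otimes x$ is supported only at $u^{0}$, ruling out $u^{2k+2}\otimes x$ for $k\geq 0$. The $\mathring{\mathfrak{p}}$-case is analogous and slightly cleaner, because the basis $(t^n-t^{-n})\otimes y$ is indexed by $n\geq 1$ only: the same polynomial argument shows that any element of $\mathbb{P}^{-}\otimes\{y\}$ in the image with finite $u$-support must vanish, forbidding in particular $u^{2k+1}\otimes y$ for $k\geq 0$. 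There is no significant obstacle; the only subtlety is the exceptional value of $a_{2k}^{(0)}$ at $k=0$, cleanly handled by isolating the $k=0$ equation from the polynomial identity at $k\geq 1$.
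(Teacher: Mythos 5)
Your proof is correct and takes essentially the same route as the paper: both reduce to a scalar linear system by expanding over a basis of $\mathring{\mathfrak{k}}$ (resp.\ $\mathring{\mathfrak{p}}$) and exploit that $k\mapsto a_{2k}^{(n)}$ is, for $k\geq 1$, a polynomial of exact degree $n-1$, so that distinct $n$ give linearly independent polynomials. The only (harmless) deviation is that for injectivity the paper extracts a regular finite evaluation matrix via lemma~\ref{lem:Linear independence of evaluated polynomails}, whereas you run the ``a nonzero polynomial has only finitely many roots'' argument uniformly for both claims and isolate the $k=0$ equation to deal with $a_{2k}^{(0)}$ --- a slight streamlining that, if anything, states the finite-support conclusion (image elements of finite $u$-support lie in $1\otimes\mathring{\mathfrak{k}}$) more precisely than the paper does.
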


\begin{rem}
Observe that there is a certain asymmetry. It is possible to map elements from $\mathfrak{k}(A)\left(\mathbb{K}\right)$ to $\mathfrak{N}\left(\mathbb{K}[[u]]\right)$ by allowing formal power series but in the reverse direction it is not possible to define such a map for elements such as  $u^{2k+2}\otimes x$ because the Laurent polynomials do not have a completion that behaves well under the involution $\eta$. One can only complete in one direction, i.e., for an element $\sum_{n\in\mathbb{Z}} c_n u^n\otimes x$ in some completion of $\mathfrak{k}\left(A\right)\left(\mathbb{K}\right)$, $c_n\neq0$ is only possible for either $n>N$ or $n<N$ but not both at the same time. But this does not agree with the demand $c_{-n}=\pm c_n$ unless $c_n\neq0$ for only finitely many $n$.
\end{rem}
\begin{proof}
For a generic element in $\mathfrak{k}\left(A\right)\left(\mathbb{K}\right)$
one can split the analysis into two pieces because elements from $\mathfrak{L}^{+}\otimes\mathring{\mathfrak{k}}$
are mapped to elements which only involve even powers $u^{2k}$ while
the ones from $\mathfrak{L}^{-}\otimes\mathring{\mathfrak{p}}$
are mapped to series involving only odd powers $u^{2k+1}$. Therefore
consider the image of 
\[
\chi\coloneqq\sum_{i=1}^{K}\left(t^{n_{i}}+t^{-n_{i}}\right)\otimes x_{i}
\]
under $\rho$: 
\[
\rho\left(\chi\right)=\sum_{i=1}^{K}\sum_{k\geq0}a_{2k}^{(n_{i})}u^{2k}\otimes x_{i}=\sum_{k\geq0}\sum_{i=1}^{K}a_{2k}^{(n_{i})}u^{2k}\otimes x_{i}=0,
\] 
so that we need $ \sum_{i=1}^{K}a_{2k}^{(n_{i})}x_{i}  =  0$ for all $k\geq 0$.
For a basis $e_{1},\dots,e_{d}$ of $\mathring{\mathfrak{k}}$
and $x_{i}=\sum_{j=1}^{d}c_{i}^{j}e_{j}$ this yields 
\[
\sum_{i=1}^{K}\sum_{j=1}^{d}a_{2k}^{(n_{i})}c_{i}^{j}e_{j}  =  0\ \forall\,k\geq0\ 
\Leftrightarrow\ \sum_{i=1}^{K}a_{2k}^{(n_{i})}c_{i}^{j}  =  0\ \forall\,k\geq0,\ \forall\,j=1,\dots,d.
\]
This way one sees that $\rho(\chi)=0$ admits nontrivial
solutions if and only if the infinite linear system of equations 
\begin{equation}
\sum_{i=1}^{K}a_{2k}^{(n_{i})}z_{i}=0\ \forall\,k\geq0\label{eq:kernel condition 1b}
\end{equation}
 does. The coefficient $a_{2N}^{(n)}$ is given by
the evaluation at $N$ of a nontrivial polynomial $p_{n}\in\mathbb{K}[x]$
of degree $n-1$ according to eq. (\ref{eq:coeffs}) Note that the polynomials $p_{n_{1}},\dots,p_{n_{K}}$
are linearly independent if the $n_{1},\dots,n_{K}$ are pairwise
distinct because then they are of different degree. Consider a subsystem of linear
equations of (\ref{eq:kernel condition 1b}) given by 
\[
\sum_{i=1}^{K}a_{2k}^{(n_{i})}z_{i}=0\quad \forall\,k\in\left\{ N_{1},\dots,N_{K}\right\}\ \Leftrightarrow\ \sum_{i=1}^{K}p_{n_{i}}(k)z_{i}=0\quad \forall\,k\in\left\{ N_{1},\dots,N_{K}\right\}. 
\]

By lemma \ref{lem:Linear independence of evaluated polynomails}
it is possible to choose $N_{1},\dots,N_{K}$ such that the matrix
$\left(p_{n_{i}}\left(N_{j}\right)\right)_{i,j=1}^{K}$ is regular.
Thus, this subsystem of linear equations admits only the trivial solution
and therefore so does (\ref{eq:kernel condition 1b}). Another way
to put this result is that for a basis $\left\{ e_{1},\dots,e_{d}\right\} $
 of $\mathring{\mathfrak{k}}$ the elements of the set
\begin{equation}
\left\{ \rho\left(\left(t^{n}+t^{-n}\right)\otimes e_{i}\right)\ \vert\ n\geq0,\ i=1,\dots,d\right\} \in\mathfrak{N}\left(\mathbb{K}\right)\label{eq:image of L+ otimes k}
\end{equation}
are linearly independent. Since $a_{2N+1}^{(n)}$ is also given by a polynomial  the same argument works for 
\begin{equation}
\left\{ \rho\left(\left(t^{n}-t^{-n}\right)\otimes f_{i}\right)\ \vert\ n\geq1,\ i=1,\dots,D\right\} \in\mathfrak{N}\left(\mathbb{K}\right),\label{eq:image of L- otimes p}
\end{equation}
where $\left\{ f_{1},\dots,f_{D}\right\} $ is a basis of $\mathring{\mathfrak{p}}$.
This shows that $\rho_{\pm}$ is injective.

We next consider the claim that elements $\sum_{k\geq0}b_{2k}u^{2k}\otimes x_{k}$
with only finitely many $b_{2k}\neq0$ are not contained in the image
of $\mathfrak{k}\left(A\right)\left(\mathbb{K}\right)$ in $\mathfrak{N}\left(\mathbb{K}[[u]]\right)$.
Any element in the image of $\mathfrak{k}\left(A\right)\left(\mathbb{K}\right)$
in $\mathfrak{N}\left(\mathbb{K}[[u]]\right)$ can be written as a linear
combination of elements in (\ref{eq:image of L+ otimes k}) and
(\ref{eq:image of L- otimes p}). It suffices to focus on one
set as they are split into even and odd coefficients. For elements
of type (\ref{eq:image of L+ otimes k}) this implies that there
exist $b_{1},\dots,b_{N}\in\mathbb{K}\setminus\{0\}$ such that 
\[
\sum_{j=1}^{N}b_{j}a_{2k}^{(n_{j})}=0\ \forall\,k>0\quad \Leftrightarrow\quad \sum_{j=1}^{N}b_{j}p^{(n_{j})}(k)=0\ \forall\,k>0.
\]
The polynomials $p^{(n_{j})}$ are linearly independent and so their
sum is a polynomial of fixed degree greater than $0$. Then the above
equation is a contradiction to the fact that a nonzero polynomial
can be equal to $0$ only at finitely many points.
\end{proof}

Propositions \Ref{prop:ansatz for hom} and \ref{prop:injectivity} show the first part of theorem \ref{thm:thm_A} concerning $\mf{N}\left(\mathbb{P}\right)$ and we consider the statements concerning $\mf{N}\left(\mathbb{P}_N\right)$ next.
\begin{prop}
\label{prop:rhoN}
The homomorphisms $\rho_{\pm}^{(N)}:\mathfrak{k}\left(A\right)\left(\mathbb{K}\right)\rightarrow\mathfrak{N}\left(\mathbb{P}_N\right)$
are surjective.
\end{prop}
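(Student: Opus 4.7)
The plan is to prove surjectivity of $\rho_{\pm}^{(N)}$ by handling the two summands $\mathbb{P}_N^{+}\otimes\mathring{\mathfrak{k}}$ and $\mathbb{P}_N^{-}\otimes\mathring{\mathfrak{p}}$ of $\mathfrak{N}(\mathbb{P}_N)$ separately, since the images of $\mathfrak{L}^{+}\otimes\mathring{\mathfrak{k}}$ and $\mathfrak{L}^{-}\otimes\mathring{\mathfrak{p}}$ under $\rho_{\pm}^{(N)}$ land in these respective summands. Writing $L=\lfloor N/2\rfloor$ and $L'=\lfloor (N-1)/2\rfloor$, it suffices, by $\mathring{\mathfrak{k}}$- and $\mathring{\mathfrak{p}}$-linearity in the second tensor factor, to show that for every fixed $x\in\mathring{\mathfrak{k}}$ the vectors $\rho_{\pm}^{(N)}((t^n+t^{-n})\otimes x)$ for $n=0,1,\dots,L$ span $\mathbb{P}_N^{+}\otimes\mathbb{K} x$, and analogously that $\rho_{\pm}^{(N)}((t^n-t^{-n})\otimes y)$ for $n=1,\dots,L'+1$ span $\mathbb{P}_N^{-}\otimes\mathbb{K} y$ for each fixed $y\in\mathring{\mathfrak{p}}$.

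For the even part, this reduces to showing that the $(L+1)\times(L+1)$ matrix $M=\big((\pm1)^{n}a_{2k}^{(n)}\big)_{n,k=0}^{L}$ is invertible; the signs $(\pm1)^{n}$ only rescale rows, so I may discard them. The row $n=0$ is $(2,0,\dots,0)$ since $a_{2k}^{(0)}=0$ for $k\geq 1$, and the first column is $(2,2,\dots,2)^{T}$ since $a_{0}^{(n)}=2$ for all $n$. Expanding the determinant along the first row reduces the problem to the invertibility of the $L\times L$ submatrix $(a_{2k}^{(n)})_{n,k=1}^{L}$. By lemma~\ref{lem1}, for each $n\in\mathbb{N}^{*}$ the map $k\mapsto a_{2k}^{(n)}$ agrees on $\mathbb{N}^{*}$ with a polynomial $p_{n}\in\mathbb{K}[k]$ of degree $n-1$. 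Since $p_{1},\dots,p_{L}$ have pairwise distinct degrees $0,1,\dots,L-1$, they form a basis of the space of polynomials of degree $<L$. If $\sum_{n=1}^{L}c_{n}p_{n}$ vanishes at $k=1,2,\dots,L$, then this polynomial of degree $<L$ has $L$ distinct zeros, hence is zero, forcing all $c_{n}=0$. Therefore the rows of the submatrix are linearly independent and $M$ is invertible.

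The odd part is strictly analogous: again by lemma~\ref{lem1}, $k\mapsto a_{2k+1}^{(n)}$ is polynomial in $k$ of degree $n-1$ (and here one checks directly from the explicit formulae displayed after lemma~\ref{lem1} that these expressions remain valid at $k=0$, so no exceptional value arises). Evaluating $p_{1},\dots,p_{L'+1}$ at the $L'+1$ distinct points $k=0,1,\dots,L'$ gives, by the same Vandermonde-style argument, an invertible $(L'+1)\times(L'+1)$ matrix, so the images $\rho_{\pm}^{(N)}((t^n-t^{-n})\otimes y)$ for $n=1,\dots,L'+1$ span $\mathbb{P}_N^{-}\otimes\mathbb{K} y$. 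Assembling the two halves yields the claim.

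The only non-routine point is the invertibility argument via polynomial degrees; the main mild obstacle is the exceptional behaviour of $a_{0}^{(n)}=2$ (vs.\ the polynomial formula on $\mathbb{N}^{*}$), which I handle by peeling off the first row and column before invoking the evaluation-at-distinct-points argument. Note that this simultaneously gives an alternative, more constructive proof of injectivity on each finite truncation, but proposition~\ref{prop:rhoN} only asks for surjectivity, which is precisely what the invertibility of $M$ (and its odd analogue) delivers.
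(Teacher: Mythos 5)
Your proof is correct, but it takes a genuinely different route from the paper's. The paper argues structurally: since $a_{2k}^{(0)}=2\delta_{k,0}$, the degree-zero piece $\mathring{\mathfrak{k}}_{(0)}$ lies in the image; one then subtracts off degree-zero tails and produces the top graded component $u^{2\lfloor N/2\rfloor}\otimes x$ as an iterated commutator, using that $\mathring{\mathfrak{k}}=[\mathring{\mathfrak{p}},\mathring{\mathfrak{p}}]$ and that $\mathring{\mathfrak{p}}$ is a simple $\mathring{\mathfrak{k}}$-module, and finishes by downward induction on the degree. You instead ignore the Lie structure entirely and show that the coefficient matrix of the finitely many generators $(t^n\pm t^{-n})\otimes(\cdot)$ with $n\leq\lfloor N/2\rfloor$ (resp.\ $n\leq\lfloor(N-1)/2\rfloor+1$) is invertible, via the degree count on the polynomials $p_n$ from lemma~\ref{lem1} and evaluation at consecutive integers --- essentially a sharpened, explicit version of the paper's own injectivity argument (lemma~\ref{lem:Linear independence of evaluated polynomails}, where the evaluation points are merely shown to exist). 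Your version buys something: it avoids the structure theory of $\mathring{\mathfrak{g}}$ (simplicity, non-compactness, simplicity of $\mathring{\mathfrak{p}}$ as a $\mathring{\mathfrak{k}}$-module) and shows that finitely many low modes already surject, with explicit preimages from the inverse matrix; the paper's version needs only $a_0^{(n)}=2$ and $a_{2k}^{(0)}=2\delta_{k,0}$ rather than the full polynomiality statement. Two small points you handled correctly but that deserve the care you gave them: the exceptional value $a_0^{(n)}=2$ versus the polynomial formula (peeling off the first row and column is exactly right, and the first column plays no role after the cofactor expansion along the row $(2,0,\dots,0)$), and the validity of the polynomial formula for $a_{2k+1}^{(n)}$ at $k=0$, which lemma~\ref{lem1} asserts only for $k\in\mathbb{N}^*$ but which does hold (the offending binomial factors vanish identically there, unlike in the even case).
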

\begin{proof}
Set 
\begin{subequations}
\begin{align}
\mathring{\mathfrak{k}}_{(2k)} & \coloneqq \text{span}_{\mathbb{K}}\left\{ \left(u^{2k}+\mathcal{I}_N\right)\otimes x\ \vert\ x\in\mathring{\mathfrak{k}}\right\} \subset\mathfrak{N}\left(\mathbb{P}_N\right),\label{eq:homogeneous_k}\\
\mathring{\mathfrak{p}}_{(2k+1)} & \coloneqq\text{span}_{\mathbb{K}}\left\{ \left(u^{2k+1}+\mathcal{I}_N\right)\otimes y\ \vert\ y\in\mathring{\mathfrak{p}}\right\} \subset\mathfrak{N}\left(\mathbb{P}_N\right) \label{eq:homogeneous_p}
\end{align}
\end{subequations} 
and note that $\mathfrak{N}\left(\mathbb{P}_N\right)$ decomposes into vector spaces as
\begin{equation}
\mathfrak{N}\left(\mathbb{P}_N\right)\cong\bigoplus_{k=0}^{\left\lfloor N/2\right\rfloor }\mathring{\mathfrak{k}}_{(2k)}\oplus\bigoplus_{k=0}^{\left\lfloor (N-1)/2\right\rfloor }\mathring{\mathfrak{p}}_{(2k+1)}. \label{eq:decomposition of NNK}
\end{equation}
The image of $\rho_{\pm}^{(N)}$ in $\mathfrak{N}\left(\mathbb{P}_N\right)$
is spanned by elements of the form 
\[
\sum_{k=0}^{\left\lfloor N/2\right\rfloor }a_{2k}^{(n)}\left(u^{2k}+\mathcal{I}_N\right)\otimes x\ \forall\,n\in\mathbb{N},\,x\in\mathring{\mathfrak{k}},\quad\sum_{k=0}^{\left\lfloor (N-1)/2\right\rfloor }a_{2k+1}^{(n)}\left(u^{2k+1}+\mathcal{I}_N\right)\otimes y\ \forall\,n\in\mathbb{N}^*,\,y\in\mathring{\mathfrak{p}}.
\]
Since $a_{2k}^{(0)}=2\delta_{k,0}$ one already has $\mathring{\mathfrak{k}}_{(0)}=\left\{ 1\otimes x\ \vert\ x\in\mathring{\mathfrak{k}}\right\}\subset \text{im}\ \rho_\pm^{(N)} $.
With this it is possible to remove the $\mathring{\mathfrak{k}}_{(0)}$-part
from other elements:
\[
\rho_{\pm}^{(N)}\left(x_{(m)}-\frac{1}{2}\left(\pm1\right)^{m}a_{0}^{(m)}x_{(0)}\right)=\sum_{k=1}^{\left\lfloor N/2\right\rfloor }a_{2k}^{(n)}\left(u^{2k}+\mathcal{I}_N\right)\otimes x\ \forall\,n\in\mathbb{N},\,x\in\mathring{\mathfrak{k}}.
\]
By the properties of the Cartan decomposition (cf. \cite[prop. 13.1.10]{HN2012}) one has that if $\mathring{\mathfrak{g}}$ is simple and non-compact then $\mathring{\mathfrak{k}}=\left[\mathring{\mathfrak{p}},\mathring{\mathfrak{p}} \right]$ and that $\mathring{\mathfrak{p}}$ is a simple $\mathring{\mathfrak{k}}$-module. Therefore any element $x\in\mathring{\mathfrak{k}}$
can be written as an iterated commutator $\left[x^{(1)},\left[x^{(2)},\dots,x^{(k)}\right]\right]$
for $x^{(1)},\dots,x^{(k)}\in\mathring{\mathfrak{k}}$ or $\mathring{\mathfrak{p}}$. Choose levels
$n_{1},\dots,n_{k}$ such that $n_{1}+\dots+n_{k}=\left\lfloor N/2\right\rfloor $
and set 
\[
\widetilde{x}^{(i)}_{(n_{i})}\coloneqq x^{(i)}_{(n_{i})}-\frac{1}{2}\left(\pm1\right)^{n_{i}}a_{0}^{(n_{i})}x^{(i)}_{(0)}.
\]
Then 
\[
\rho_{\pm}^{(N)}\left(\left[\widetilde{x}^{(1)}_{(n_{1})},\left[\dots,\widetilde{x}^{(k)}_{(n_{k})}\right]\right]\right)=\left(u^{2\left\lfloor N/2\right\rfloor }+\mathcal{I}_N\right)\otimes x
\]
which shows that
\[
\left\{ \left(u^{2\left\lfloor N/2\right\rfloor }+\mathcal{I}_N\right)\otimes x\ \vert\ x\in\mathring{\mathfrak{k}}\right\} 
\]
 is contained in the image of $\rho_{\pm}^{(N)}$. The same procedure works for $\left(u^{2k+1}+\mathcal{I}_N\right)\otimes\mathring{\mathfrak{p}}$ because $\mathring{\mathfrak{p}}$ is a simple $\mathring{\mathfrak{k}}$-module and therefore there exist iterated commutators here as well. Repeat this process for the
lower levels as now the level $\left(u^{2\left\lfloor N/2\right\rfloor }+\mathcal{I}_N\right)$
can be removed. This shows by induction that each homogeneous space in (\ref{eq:decomposition of NNK}) lies in the image of $\rho_\pm^{(N)}$ which concludes the proof.
\end{proof}

With this, we have proven all parts of theorem \ref{thm:thm_A}. We conclude this section with some results on the structure of $\mf{N}(\mathbb{P}_N)$ and the kernel of $\rho_{\pm}^{(N)}$.

\begin{prop}\label{prop:strucN}
Let $\mathring{\mathfrak{k}}_{(2k)}$ and $\mathring{\mathfrak{p}}_{(2k+1)}$ be as in (\ref{eq:homogeneous_k}) and (\ref{eq:homogeneous_p}) and denote by $\mathfrak{z}\big(\mathring{\mathfrak{k}}_{(0)}\big)$ the center\footnote{Note that for a generalised Dynkin diagram $A$ of untwisted affine
type, the only cases when $\mathfrak{z}(\mathring{\mathfrak{k}}_{(0)})$
is nontrivial are $A=C_{l}^{(1)}$ and $A=A_{1}^{(1)}$. For $\mathring{\mathfrak{k}}_{(0)}=\mathfrak{k}\left(C_{l}\right)$
one has $\mathfrak{k}\left(C_{l}\right)\cong\mathfrak{u}_{l}$ which
contains a nontrivial center, whereas for $\mathfrak{k}\left(A_{1}\right)\cong\mathbb{R}$
the center is already all of $\mathring{\mathfrak{k}}_{(0)}$.} of $\mathring{\mathfrak{k}}_{(0)}$, then 
\[
\mathfrak{J}_{(N)}\coloneqq\mathfrak{z}\big(\mathring{\mathfrak{k}}_{(0)}\big)\oplus\bigoplus_{k=1}^{\left\lfloor N/2\right\rfloor }\mathring{\mathfrak{k}}_{(2k)}\oplus\bigoplus_{k=0}^{\left\lfloor (N-1)/2\right\rfloor }\mathring{\mathfrak{p}}_{(2k+1)}\subset\mathfrak{N}\left(\mathbb{P}_N\right)
\]
is the radical of $\mathfrak{N}\left(\mathbb{P}_N\right)$, i.e.,
the unique maximal solvable ideal in $\mathfrak{N}\left(\mathbb{P}_N\right)$.
Hence, the Levi decomposition of $\mathfrak{N}\left(\mathbb{P}_N\right)$
is given by 
\[
\mathfrak{N}\left(\mathbb{P}_N\right)\cong\left[\mathring{\mathfrak{k}}_{(0)},\mathring{\mathfrak{k}}_{(0)}\right]\ltimes\mathfrak{J}_{(N)}.
\]

\end{prop}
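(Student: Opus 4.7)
The plan is to proceed in three steps: first I would show that $\mathfrak{J}_{(N)}$ is an ideal of $\mathfrak{N}(\mathbb{P}_N)$, then that it is solvable, and finally that any solvable ideal is contained in it; the Levi decomposition then follows at once from the reductive structure of $\mathring{\mathfrak{k}}$.

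For the ideal property, denote the strictly-positive-degree summand by $\mathfrak{N}_{>0}:=\bigoplus_{k\geq 1}\mathring{\mathfrak{k}}_{(2k)}\oplus\bigoplus_{k\geq 0}\mathring{\mathfrak{p}}_{(2k+1)}$. Multiplication in $\mathbb{P}_N$ preserves or raises $u$-degree, so $[\mathring{\mathfrak{k}}_{(0)},\mathfrak{N}_{>0}]\subset \mathfrak{N}_{>0}$, while $[\mathring{\mathfrak{k}}_{(0)},\mathfrak{z}(\mathring{\mathfrak{k}}_{(0)})]=0$ by the definition of the center. Together these show that $\mathfrak{J}_{(N)}=\mathfrak{z}(\mathring{\mathfrak{k}}_{(0)})\oplus \mathfrak{N}_{>0}$ is $\mathrm{ad}\,\mathring{\mathfrak{k}}_{(0)}$-stable, and its internal brackets already land inside it, so it is an ideal. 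Solvability then follows from the observation that $[\mathfrak{J}_{(N)},\mathfrak{J}_{(N)}]\subset \mathfrak{N}_{>0}$ (since $[\mathfrak{z}(\mathring{\mathfrak{k}}_{(0)}),\mathfrak{z}(\mathring{\mathfrak{k}}_{(0)})]=0$ and every other bracket contains a positive-degree factor), combined with the fact that $\mathfrak{N}_{>0}$ is nilpotent: every bracket of positive-degree homogeneous elements strictly raises the $u$-degree, which is capped at $N$ in $\mathbb{P}_N$.

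For maximality, I would pass to the quotient via the projection $\pi\colon \mathfrak{N}(\mathbb{P}_N)\twoheadrightarrow \mathfrak{N}(\mathbb{P}_N)/\mathfrak{N}_{>0}\cong \mathring{\mathfrak{k}}_{(0)}$. Given any solvable ideal $\mathfrak{r}\subset \mathfrak{N}(\mathbb{P}_N)$, the image $\pi(\mathfrak{r})$ is a solvable ideal of $\mathring{\mathfrak{k}}_{(0)}$; since $\mathring{\mathfrak{k}}$ is the fixed subalgebra of the Chevalley involution on the finite-dimensional semisimple $\mathring{\mathfrak{g}}$, it is reductive, so its radical coincides with its center. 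Thus $\pi(\mathfrak{r})\subset \mathfrak{z}(\mathring{\mathfrak{k}}_{(0)})$, and consequently $\mathfrak{r}\subset \mathfrak{J}_{(N)}$, which proves that $\mathfrak{J}_{(N)}$ is the radical.

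The Levi decomposition is then immediate: the reductive splitting $\mathring{\mathfrak{k}}_{(0)}=\mathfrak{z}(\mathring{\mathfrak{k}}_{(0)})\oplus [\mathring{\mathfrak{k}}_{(0)},\mathring{\mathfrak{k}}_{(0)}]$ lifts to a vector-space decomposition $\mathfrak{N}(\mathbb{P}_N)=[\mathring{\mathfrak{k}}_{(0)},\mathring{\mathfrak{k}}_{(0)}]\oplus \mathfrak{J}_{(N)}$, with $[\mathring{\mathfrak{k}}_{(0)},\mathring{\mathfrak{k}}_{(0)}]$ a semisimple subalgebra complementary to the radical, yielding the claimed semidirect sum. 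The most delicate point will be justifying reductivity of $\mathring{\mathfrak{k}}$ uniformly over $\mathbb{K}=\mathbb{R}$ and $\mathbb{K}=\mathbb{C}$ (over $\mathbb{R}$ the positive-definite invariant bilinear form recalled in the introduction makes it automatic, while over $\mathbb{C}$ one invokes the standard fact that the fixed-point subalgebra of an involutive automorphism of a finite-dimensional semisimple Lie algebra is reductive), together with verifying that the formulation is consistent with the two exceptional Kac--Moody diagrams highlighted in the footnote where $\mathfrak{z}(\mathring{\mathfrak{k}}_{(0)})$ is nontrivial.
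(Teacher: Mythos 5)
Your proof is correct, and its first two steps (the ideal property and solvability, both read off from the capped $\mathbb{N}$-grading of $\mathfrak{N}(\mathbb{P}_N)$) coincide with the paper's. Where you genuinely diverge is the maximality step. The paper argues from the other side: for $0\neq x\in[\mathring{\mathfrak{k}}_{(0)},\mathring{\mathfrak{k}}_{(0)}]$ it considers the ideal generated by $\mathfrak{J}_{(N)}$ together with $x$, observes that the ideal $\mathfrak{j}_0$ generated by $x$ inside the semisimple algebra $[\mathring{\mathfrak{k}}_{(0)},\mathring{\mathfrak{k}}_{(0)}]$ is itself semisimple, hence perfect, so the derived series of the enlarged ideal never loses $\mathfrak{j}_0$ and the enlarged ideal cannot be solvable; this establishes that $\mathfrak{J}_{(N)}$ is a maximal solvable ideal and then appeals to the standard fact that such an ideal is the radical. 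You instead push an arbitrary solvable ideal $\mathfrak{r}$ through the projection $\pi\colon\mathfrak{N}(\mathbb{P}_N)\to\mathfrak{N}(\mathbb{P}_N)/\mathfrak{N}_{>0}\cong\mathring{\mathfrak{k}}_{(0)}$ and use that the radical of the reductive algebra $\mathring{\mathfrak{k}}_{(0)}$ is its center, so that $\mathfrak{r}\subset\pi^{-1}\big(\mathfrak{z}(\mathring{\mathfrak{k}}_{(0)})\big)=\mathfrak{J}_{(N)}$. Your version is a bit cleaner in that it shows directly that \emph{every} solvable ideal lies in $\mathfrak{J}_{(N)}$, so uniqueness of the maximal solvable ideal comes for free; the price is that you must justify reductivity of $\mathring{\mathfrak{k}}$ over both ground fields, a point you correctly flag (positive definiteness of the invariant form over $\mathbb{R}$, the fixed-point theorem for involutions over $\mathbb{C}$) and which the paper sidesteps by working only with the semisimple part $[\mathring{\mathfrak{k}}_{(0)},\mathring{\mathfrak{k}}_{(0)}]$ and the perfectness of its ideals. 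Both routes are valid and yield the stated Levi decomposition in the same way.
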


\begin{proof}
The $\mathbb{N}$-graded structure of $\mathfrak{N}\left(\mathbb{P}_N\right)$
is given by its decomposition into vector spaces (\ref{eq:decomposition of NNK}). By the gradation one deduces 
\[
\left[\mathring{\mathfrak{p}}_{(2k-1)},\mathring{\mathfrak{p}}_{(2\ell-1)}\right]\subseteq\mathring{\mathfrak{k}}_{(2k+2\ell-2)},\ \left[\mathring{\mathfrak{k}}_{(2k)},\mathring{\mathfrak{p}}_{(2\ell-1)}\right]\subseteq\mathring{\mathfrak{p}}_{(2k+2\ell-1)},\ \left[\mathring{\mathfrak{k}}_{(2k)},\mathring{\mathfrak{k}}_{(2\ell)}\right]\subseteq\mathring{\mathfrak{k}}_{(2k+2\ell)},
\]
where it is understood that $\mathring{\mathfrak{k}}_{(2k)}=\{0\}=\mathring{\mathfrak{p}}_{(2\ell+1)}$
for $k>\left\lfloor N/2\right\rfloor $ and $\ell>\left\lfloor (N-1)/2\right\rfloor $.
From this it follows that $\mathfrak{J}_{(N)}$ is an ideal as in
particular 
\[
\left[\mathring{\mathfrak{k}}_{(0)},\mathring{\mathfrak{k}}_{(2\ell)}\right]\subseteq\mathring{\mathfrak{k}}_{(2\ell)}\ \text{and }\left[\mathring{\mathfrak{k}}_{(0)},\mathring{\mathfrak{p}}_{(2\ell-1)}\right]\subseteq\mathring{\mathfrak{p}}_{(2\ell-1)}.
\]
Since $\left[\mathfrak{z}\big(\mathring{\mathfrak{k}}_{(0)}\big),\mathfrak{z}\big(\mathring{\mathfrak{k}}_{(0)}\big)\right]=\{0\}$
the lowest degree in $\left[\mathfrak{J}_{(N)},\mathfrak{J}_{(N)}\right]$
is $1$ and therefore $\mathfrak{J}_{(N)}$ is solvable because $\mathring{\mathfrak{k}}_{(2k)}=\{0\}=\mathring{\mathfrak{p}}_{(2l+1)}$
for $k>\left\lfloor N/2\right\rfloor $ and $\ell>\left\lfloor (N-1)/2\right\rfloor $.
Consider the ideal generated by $\mathfrak{J}_{(N)}+x$ for $0\neq x\in\left[\mathring{\mathfrak{k}}_{(0)},\mathring{\mathfrak{k}}_{(0)}\right]$.
As $\left[\mathring{\mathfrak{k}}_{(0)},\mathring{\mathfrak{k}}_{(0)}\right]$
is semi-simple so is the ideal $\mathfrak{j}_{0}$ in $\left[\mathring{\mathfrak{k}}_{(0)},\mathring{\mathfrak{k}}_{(0)}\right]$
generated by $x$. Since ideals of semisimple Lie algebras are semisimple,
$\mathfrak{j}_{0}$ is also perfect. Thus, the upper derived series
$\mathfrak{j}_{0}^{(n+1)}\coloneqq\left[\mathfrak{j}_{0}^{(n)},\mathfrak{j}_{0}^{(n)}\right]$
becomes constant at $\mathfrak{j}_{0}\coloneqq\left[\mathfrak{j}_{0},\mathfrak{j}_{0}\right]$
and therefore the upper derived series $\mathfrak{J}_{(N)}^{(n+1)}\coloneqq\left[\mathfrak{J}_{(N)}^{(n)},\mathfrak{J}_{(N)}^{(n)}\right]$
will always contain $\mathfrak{j}_{0}$. Thus, $\mathfrak{J}_{(N)}$
is a maximal solvable ideal and therefore by definition the radical
of $\mathfrak{N}\left(\mathbb{P}_N\right)$.
\end{proof}

\begin{rem}
We call $\mf{N}(\mathbb{K}[[u]])$ a parabolic Lie algebra since it is the inverse limit of the parabolic Lie algebra $\mf{N}(\mathbb{P}_N)$. By proposition~\ref{prop:ansatz for hom} and corollary~\ref{cor:truncated hom}, we can construct representations of the involutory subalgebra $\mf{k}$ by considering representations of $\mf{N}(\mathbb{K}[[u]])$ or $\mf{N}(\mathbb{P}_N)$, respectively.
\end{rem}

\begin{rem}
By a consequence of Lie's theorem one has that every simple representation of $\mathfrak{N}\left(\mathbb{P}_N\right)$ over a complex vector space is given by the tensor
product of a simple representation of $\mathfrak{N}\left(\mathbb{P}_N\right)\diagup \mathrm{rad}\left(\mathfrak{N}\left(\mathbb{P}_N\right)\right)\cong [\khor,\khor]$
with a one-dimensional representation of $\mathfrak{N}\left(\mathbb{P}_N\right)$.
Therefore, the simple representations of $\mathfrak{k}\left(A\right)\left(\mathbb{K}\right)$
that factor through $\mathfrak{N}\left(\mathbb{P}_N\right)$ are
essentially the simple representations of $\mathring{\mathfrak{k}}\left(\mathbb{K}\right)$. These correspond to truncation at $N=0$.
\end{rem}

The kernels $\ker\rho_{\pm}^{(N)}$ are described by linear systems
of equations as the following proposition shows:
\begin{prop}
Let $\ker\rho_{\pm}^{(N)}$ be the kernel of the homomorphism described
in cor. \ref{cor:truncated hom}. Then with 
\[
x_{(m)}\coloneqq\left(t^{m}+t^{-m}\right)\otimes x\ \forall\,x\in\mathring{\mathfrak{k}},\qquad y_{(m)}\coloneqq\left(t^{m}-t^{-m}\right)\otimes y\ \forall\,y\in\mathring{\mathfrak{p}}
\]
one has
\begin{align*}
\ker\rho_{\pm}^{(N)} & =  \mathrm{span}\left\{ \sum_{i=1}^{M}\left(\pm1\right)^{m_{i}}b_{i}x_{(m_{i})}\, \middle| \,\sum_{i=1}^{M}\left(\pm1\right)^{m_{i}}b_{i}a_{2k}^{(m_{i})}=0\ \forall\,k{=}0,{\ldots},\left\lfloor N/2\right\rfloor ,\ \forall\,x{\in}\mathring{\mathfrak{k}}\right\} \\
 &  \quad\oplus \mathrm{span}\left\{ \sum_{i=1}^{M}\left(\pm1\right)^{m_{i}}b_{i}y_{(m_{i})} \,\middle|\, \sum_{i=1}^{M}\left(\pm1\right)^{m_{i}}b_{i}a_{2k+1}^{(m_{i})}=0\ \forall\,k{=}0,{\ldots},\left\lfloor (N{-}1)/2\right\rfloor ,\ \forall\,y{\in}\mathring{\mathfrak{p}}\right\} \\
\ker\rho_{\pm}^{(N)} & \supset  \ker\rho_{\pm}^{(N+1)}.
\end{align*}
\end{prop}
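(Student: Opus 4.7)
The plan is to reduce the kernel description to a linear-algebra computation using the explicit formulas of Corollary~\ref{cor:truncated hom}. First, I would exploit the $\mathbb{Z}/2$-parity of $\rho_\pm^{(N)}$: by the corollary it maps $\mathfrak{L}^+\otimes\khor$ into $\mathbb{P}_N^+\otimes\khor$ (only even powers of $u$ occur) and $\mathfrak{L}^-\otimes\phor$ into $\mathbb{P}_N^-\otimes\phor$ (only odd powers). As these two target subspaces intersect trivially inside $\mathfrak{N}(\mathbb{P}_N)$, the kernel automatically splits as the direct sum of its intersections with $\mathfrak{L}^+\otimes\khor$ and $\mathfrak{L}^-\otimes\phor$, which is the direct-sum structure asserted in the proposition.

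Next, for the $\khor$-summand I would apply Corollary~\ref{cor:truncated hom} to a generic element $\sum_{i=1}^{M} b_i\, x^{(i)}_{(m_i)}$ with $b_i\in\mathbb{K}$, $x^{(i)}\in\khor$ and $m_i\in\mathbb{N}$, obtaining the image $\sum_{k=0}^{\lfloor N/2\rfloor}(u^{2k}+\mathcal{I}_N)\otimes\bigl(\sum_{i=1}^{M} b_i(\pm1)^{m_i}a_{2k}^{(m_i)}\,x^{(i)}\bigr)$. Since $\{(u^{2k}+\mathcal{I}_N)\otimes e_j\}$, for $\{e_j\}$ a basis of $\khor$ and $0\le k\le\lfloor N/2\rfloor$, forms a basis of $\mathbb{P}_N^+\otimes\khor$, vanishing of the image is equivalent to $\sum_i b_i(\pm1)^{m_i}a_{2k}^{(m_i)}\,x^{(i)}=0$ in $\khor$ for each admissible $k$. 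Expanding each $x^{(i)}$ in the basis $\{e_j\}$ and invoking linear independence of the $e_j$ then separates the condition one basis direction at a time; after a harmless relabelling of the $(\pm1)^{m_i}b_i$ versus $b_i$ coefficients this is exactly the condition stated in the proposition. The argument for the $\phor$-summand is verbatim with the coefficients $a_{2k+1}^{(m_i)}$ in place of $a_{2k}^{(m_i)}$ and a basis of $\phor$ in place of one of $\khor$.

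Finally, for the inclusion $\ker\rho_\pm^{(N)}\supset\ker\rho_\pm^{(N+1)}$ I would invoke the factorisation noted in the remark preceding the proposition: $\rho_\pm^{(N)}=\mathrm{Pr}_N\circ\rho_\pm$ agrees with the composition of $\rho_\pm^{(N+1)}$ with the natural projection $\mathfrak{N}(\mathbb{P}_{N+1})\twoheadrightarrow\mathfrak{N}(\mathbb{P}_N)$ induced by the further quotient $\mathbb{K}[[u]]/\mathcal{I}_{N+1}\twoheadrightarrow\mathbb{K}[[u]]/\mathcal{I}_N$; hence anything killed by $\rho_\pm^{(N+1)}$ is automatically killed by $\rho_\pm^{(N)}$. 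The whole argument is essentially bookkeeping rather than genuine mathematics; the only mild obstacle is to keep the sign factors $(\pm1)^{m_i}$ inherited from $\rho_\pm$ aligned with those in the kernel condition, and to verify via linear independence that different basis directions of $\khor$ (respectively $\phor$) decouple, so that one may indeed state the condition one fixed $x\in\khor$ (respectively $y\in\phor$) at a time.
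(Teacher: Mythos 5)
Your proposal is correct and follows essentially the same route as the paper: apply the explicit formulas of Corollary~\ref{cor:truncated hom} to a general linear combination, use that $\{(u^{k}+\mathcal{I}_N)\otimes e_j\}$ is a basis of the target to translate vanishing of the image into the stated linear conditions on the coefficients $a_{2k}^{(m_i)}$, and observe that the kernel only shrinks as $N$ grows. Your treatment is in fact slightly more careful than the paper's in two places --- the explicit parity argument for the direct-sum splitting and the basis expansion showing that distinct directions of $\khor$ (resp.\ $\phor$) decouple --- and your use of the factorisation $\rho_\pm^{(N)}=\mathrm{Pr}_N\circ\rho_\pm$ for the nesting of kernels is an equivalent (marginally cleaner) substitute for the paper's observation that the defining linear system for $N$ is a subsystem of that for $N+1$.
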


\begin{proof}
In general one has for $x\in\mathring{\mathfrak{k}}$, $y\in\mathring{\mathfrak{p}}$
that
\[
\rho_{\pm}^{(N)}\left(x_{(m)}\right)=\left(\pm1\right)^{m}\sum_{k=0}^{\left\lfloor N/2\right\rfloor }a_{2k}^{(m)}u^{2k}\otimes x,\quad\rho_{\pm}^{(N)}\left(y_{(m)}\right)=\left(\pm1\right)^{m}\sum_{k=0}^{\left\lfloor (N-1)/2\right\rfloor }a_{2k+1}^{(m)}u^{2k+1}\otimes y
\]
and one computes for $\sum_{i=1}^{M}\left(\pm1\right)^{m_{i}}b_{i}x_{(m_{i})}$
that
\begin{eqnarray*}
\rho_{\pm}^{(N)}\left(\sum_{i=1}^{M}\left(\pm1\right)^{m_{i}}b_{i}x_{(m_{i})}\right) & = & \sum_{i=1}^{M}\left(\pm1\right)^{m_{i}}b_{i}\sum_{k=0}^{\left\lfloor N/2\right\rfloor }a_{2k}^{(m_{i})}u^{2k}\otimes x\\
 & = & \sum_{k=0}^{\left\lfloor N/2\right\rfloor }\left(\sum_{i=1}^{M}\left(\pm1\right)^{m_{i}}b_{i}a_{2k}^{(m_{i})}\right)u^{2k}\otimes x=0
\end{eqnarray*}
if and only if 
\[
\sum_{i=1}^{M}\left(\pm1\right)^{m_{i}}b_{i}a_{2k}^{(m_{i})}=0\ \forall\,k=0,\dots,\left\lfloor N/2\right\rfloor .
\]
Similarly one deduces that 
\[
\rho_{\pm}^{(N)}\left(\sum_{i=1}^{M}\left(\pm1\right)^{m_{i}}b_{i}y_{(m_{i})}\right)=0\ \Leftrightarrow\ \sum_{i=1}^{M}\left(\pm1\right)^{m_{i}}b_{i}a_{2k+1}^{(m_{i})}=0\ \forall\,k=0,\dots,\left\lfloor (N-1)/2\right\rfloor .
\]
These equations remain unaltered by changing $N\mapsto N+N_{0}$,
$N_{0}\in\mathbb{N}$, there only appear additional equations to be
satisfied. This shows that 
\[
\ker\rho_{\pm}^{(N)}\supset\ker\rho_{\pm}^{(N+1)}.
\]
\end{proof}
\begin{rem}
Specialised to $\mf{e}_9$ the ideals $\ker\rho_{\pm}^{(0)}$ in $\mf{k}\left(\mf{e}_9\right)(\mathbb{K})$ coincide with the Dirac ideals of \cite{Kleinschmidt:2006dy}. Furthermore, these ideals can be shown to be principal ideals.
\end{rem}

\section{Representations from quotients of induced representations}
\label{sec:verm}

In this section, we consider representations of $\mf{k}$ that are constructed using induced representations of $\mf{N}\left(\mathbb{K}[u]\right)$,  the model of $\mf{k}$ over the polynomial ring which will be defined below. Since representations of $\mf{N}\left(\mathbb{K}[u]\right)$ in general do not provide representations of $\mf{k}$, we will then describe how to recover representations of $\mf{k}$.

Construct the Lie algebra $\mf{N}\left(\mathbb{K}[u]\right)$  in the same way as $\mf{N}(\mathbb{K}[[u]])$ in~\eqref{eq:Ndef}  but replace $\mathbb{K}[[u]]$ by the ring of polynomials $\mathbb{K}[u]$. As mentioned in remark~\ref{rmk:ser}, $\mf{N}\left(\mathbb{K}[u]\right)$ is $\mathbb{N}$-graded  and its graded decomposition is given by

\begin{align}
\label{eq:Ndec}
\mf{N}\left(\mathbb{K}[u]\right) = \bigoplus_{k=0}^\infty  \mf{N}^{(k)}\,,
\end{align}
where
\begin{align*}
\mf{N}^{(k)} = \left\{ \begin{array}{cl}
\mathrm{span}_{\mathbb{K}} \left\{ u^{2n} \otimes \khor \right\} , & \text{$k=2n$ even},\\
\mathrm{span}_{\mathbb{K}} \left\{ u^{2n+1} \otimes \phor \right\} , & \text{$k=2n+1$ odd}.\\
\end{array}\right.
\end{align*}

We believe that there does not exist any non-trivial Lie algebra homomorphism from $\mf{k}$ into $\mf{N}\left(\mathbb{K}[u]\right)$. Such homomorphisms, similar to those of proposition~\ref{prop:rhoN},  only exist when we quotient by $\mathcal{I}_N$ defined in~\eqref{eq:idN} which can also be thought of as an ideal of $\mathbb{K}[u]$.
 The structure of $\mf{N}\left(\mathbb{K}[u]\right)$ is analogous to that of $\mf{N}(\mathbb{P}_N)$ described in proposition~\ref{prop:strucN}. In particular, the (now no longer solvable) ideal $\mf{N}_+$ is given by 
\begin{align*}
\mf{N}_+  = \mf{z}(\khor) \oplus \bigoplus_{k>0} \mf{N}^{(k)}\,,
\end{align*}
where $\mf{z}(\khor)$ denotes the center of $\khor$. The ideal $\mf{N}_{+} $ inherits the $\mathbb{N}$-grading from $\mf{N}\left(\mathbb{K}[u]\right)$. We shall in the following assume that $\mf{z}(\khor)=0$ for simplicity. All statements can be generalised straight-forwardly. 

The universal enveloping algebras $\mathcal{U}(\mf{N}_{+})$ and $\mathcal{U}(\mf{N}\left(\mathbb{K}[u]\right))$ also inherit the $\mathbb{N}$-gradation from the Lie algebra and the Poincar\'e--Birkhoff--Witt theorem provides a basis of them. One has
\begin{align}
\mathcal{U}(\mf{N}_{+}) = \bigoplus_{\ell=0}^\infty\, \mathcal{U}_\ell,
\end{align}
where $\mathcal{U}_\ell$ denotes the (ordered) words in the tensor algebra of degree $\ell$ with respect to the $\mathbb{N}$-grading, where $x\leq y$ if $\text{deg}(x)\leq \text{deg}(y)$. For the first few levels, this means
\begin{align*}
\label{eq:vermbas}
\mathcal{U}_0 &= 1\,,& \quad
\mathcal{U}_1 &= \mf{N}^{(1)} \,,\\
\mathcal{U}_2 &= \mathrm{Sym}^2\left(\mf{N}^{(1)}\right) \oplus \mf{N}^{(2)}\,,& \quad
\mathcal{U}_3 &= \mathrm{Sym}^3\left(\mf{N}^{(1)}\right) \oplus \mf{N}^{(1)} \otimes \mf{N}^{(2)} \oplus \mf{N}^{(3)}\,.\nn
\end{align*}
The $\mf{N}^{(k)}$ are $\khor$-modules because the adjoint action of $\khor$ preserves the degree. We will make these expressions more explicit in the example in section~\ref{sec:Specialisation}. For the full universal enveloping algebra one has 
\begin{align}
\mathcal{U}(\mf{N}\left(\mathbb{K}[u]\right)) \cong \mathcal{U}(\khor) \otimes_{\mathbb{K}} \mathcal{U}(\mf{N}_{+})
\end{align}
as a tensor product of $\mathbb{K}$-vector
spaces. In terms of the multiplication $\cdot$ in $\mathcal{U}\left(\mf{N}(\mathbb{K}[u])\right)$
the above decomposition is better written as 
\begin{align}
\mathcal{U}\left(\mf{N}\left(\mathbb{K}[u]\right)\right)=\mathcal{U}(\khor)\cdot\mathcal{U}\left(\mf{N}_{+}\right)\label{eq:UEA-decomposition}
\end{align}
This fact is due to the PBW-theorem applied to a suitably chosen order
that is such that elements of degree $0$ appear to the left in the
basis that is provided by the PBW-theorem. Recall that all elements
of $\khor=\mf{N}^{(0)}\subset\mf{N}\left(\mathbb{K}[u]\right)$
have degree $0$.

\medskip

Now consider a finite-dimensional $\khor$-module $\tV_0$ which we view as a left $\mathcal{U}\big(\khor\big)$-module. As $\mathcal{U}\big(\mf{N}\left(\mathbb{K}[u]\right)\big)$ allows the structure of a right $\mathcal{U}\big(\khor\big)$-module we build the induced 
$\mf{N}\left(\mathbb{K}[u]\right)$-module via the tensor product
\begin{align}
\label{eq:induced_V}
\tV\coloneqq\mathcal{U}\left(\mf{N}\left(\mathbb{K}[u]\right)\right)\otimes_{\mathcal{U}(\khor)}\tV_0,
\end{align}
where the tensor product is defined with $\mathcal{U}\left(\mf{N}\left(\mathbb{K}[u]\right)\right)$
 as a right $\mathcal{U}\big(\khor\big)$-module. Explicitly one has 
\begin{align*}
a\otimes\left(x\cdot v\right)=ax\otimes v
\end{align*}
for $v\in V$, $x\in\mathcal{U}\big(\khor\big)$, $a\in\mathcal{U}\left(\mf{N}\left(\mathbb{K}[u]\right)\right)$. 
$\tV$, however, is viewed as a left $\mathcal{U}\left(\mf{N}\left(\mathbb{K}[u]\right)\right)$-module, i.e. for $a,b\in\mathcal{U}\left(\mf{N}\left(\mathbb{K}[u]\right)\right)$,
$v\in V$ one has 
\begin{align*}
a\cdot\left(b\otimes v\right)=\left(a\cdot b\right)\otimes v.
\end{align*}

\begin{lem}
\label{lem:induced_V}The induced module $\tV$ inherits the $\mathbb{N}$-grading
\begin{align}
\label{eq:V}
\tV = \bigoplus_{i=0}^\infty \tV_i
\end{align}
from $\mathcal{U}\left(\mf{N}\left(\mathbb{K}[u]\right)\right)$ and is an infinite-dimensional representation of $\mf{N}\left(\mathbb{K}[u]\right)$.  $\tV$ is generated by the action of $\mathcal{U}(\mf{N}_{+})$ on $1\otimes \tV_0$. Furthermore, a $\mathbb{K}$-basis of $\tV$ is given by the Kronecker product of a PBW-basis of $\mathcal{U}\left(\mf{N}_{+}\right)$ with a basis of $\tV_0$.
\end{lem}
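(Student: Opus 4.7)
The plan is to exploit the vector-space splitting $\mf{N}(\mathbb{K}[u]) = \khor \oplus \mf{N}_+$, in which $\mf{N}_+$ is an ideal and $\khor = \mf{N}^{(0)}$ sits in degree zero, together with the decomposition~\eqref{eq:UEA-decomposition} of the enveloping algebra. First I would fix a PBW-ordering on $\mf{N}(\mathbb{K}[u])$ in which every basis vector of $\khor$ follows all basis vectors of the higher-degree components $\mf{N}^{(k)}$, $k\geq 1$; the resulting ordered monomials then factor as $b\cdot a$ with $b$ a PBW-monomial in $\mathcal{U}(\mf{N}_+)$ and $a$ a PBW-monomial in $\mathcal{U}(\khor)$, and by the PBW-theorem they form a basis of $\mathcal{U}(\mf{N}(\mathbb{K}[u]))$. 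Consequently $\mathcal{U}(\mf{N}(\mathbb{K}[u]))$ is free as a right $\mathcal{U}(\khor)$-module with basis the PBW-monomials of $\mathcal{U}(\mf{N}_+)$.

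From this freeness the standard base-change argument yields an isomorphism of $\mathbb{K}$-vector spaces
\begin{align*}
\mathcal{U}(\mf{N}_+)\otimes_{\mathbb{K}} \tV_0 \;\xrightarrow{\ \sim\ }\; \tV,\qquad b\otimes v\longmapsto b\otimes v,
\end{align*}
so the Kronecker product of a PBW-basis of $\mathcal{U}(\mf{N}_+)$ with a basis of $\tV_0$ is a $\mathbb{K}$-basis of $\tV$. Since $\mf{N}^{(k)}\neq 0$ for every $k\geq 1$, $\mathcal{U}(\mf{N}_+)$ is infinite-dimensional and hence so is $\tV$. Writing $b\otimes v = b\cdot(1\otimes v)$ shows at the same time that $\tV$ is generated, as a $\mathcal{U}(\mf{N}_+)$-module, by $1\otimes \tV_0$.

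For the grading, every defining tensor relation $\alpha x\otimes v = \alpha\otimes (x\cdot v)$ involves only $x\in\khor = \mf{N}^{(0)}$ of degree zero (and $\tV_0$ is placed in degree zero), so it identifies elements lying in the same homogeneous component of $\mathcal{U}(\mf{N}(\mathbb{K}[u]))\otimes_{\mathbb{K}}\tV_0$. Hence the $\mathbb{N}$-grading on $\mathcal{U}(\mf{N}(\mathbb{K}[u]))$ descends to the claimed grading~\eqref{eq:V} on $\tV$; under the isomorphism above it coincides with the grading on $\mathcal{U}(\mf{N}_+)\otimes_{\mathbb{K}}\tV_0$ inherited from $\mathcal{U}(\mf{N}_+)$. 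The one step requiring care is the freeness of $\mathcal{U}(\mf{N}(\mathbb{K}[u]))$ as a right $\mathcal{U}(\khor)$-module: it uses both the vector-space splitting $\mf{N}(\mathbb{K}[u]) = \khor\oplus\mf{N}_+$ and the ideal property $[\khor,\mathcal{U}(\mf{N}_+)]\subseteq \mathcal{U}(\mf{N}_+)$, which together allow any ordered monomial to be unambiguously rewritten in the form $\mathcal{U}(\mf{N}_+)\cdot\mathcal{U}(\khor)$. Once this freeness is in hand, every remaining claim is a direct unwinding of the definition of the induced module.
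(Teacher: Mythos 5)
Your proof is correct and reaches all the claims of the lemma, by a route that is close in spirit to the paper's but organised differently — and, arguably, more cleanly. The paper works with the factorisation $\mathcal{U}\left(\mf{N}\left(\mathbb{K}[u]\right)\right)=\mathcal{U}(\khor)\cdot\mathcal{U}\left(\mf{N}_{+}\right)$ of \eqref{eq:UEA-decomposition}, i.e.\ with the degree-zero part on the \emph{left}; it must therefore repeatedly rewrite $xy\otimes v$ as $\left(yx+[x,y]\right)\otimes v$ and track an explicit equivalence relation on $\mathcal{U}(\khor)\otimes_{\mathbb{K}}\mathcal{U}\left(\mf{N}_{+}\right)\otimes_{\mathbb{K}}\tV_0$ before concluding the $1$--$1$ correspondence with $\mathcal{U}\left(\mf{N}_{+}\right)\otimes_{\mathbb{K}}\tV_0$. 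You instead choose the PBW ordering with the $\khor$-basis \emph{last}, so that $\mathcal{U}\left(\mf{N}\left(\mathbb{K}[u]\right)\right)$ is manifestly free as a \emph{right} $\mathcal{U}(\khor)$-module on the PBW monomials of $\mathcal{U}\left(\mf{N}_{+}\right)$, and the basis statement then follows from base change with no commutator bookkeeping; this is the standard Verma-module argument, and your grading and infinite-dimensionality arguments are likewise fine. One small correction to your closing remark: the freeness, i.e.\ the vector-space factorisation $\mathcal{U}\left(\mf{N}\left(\mathbb{K}[u]\right)\right)\cong\mathcal{U}\left(\mf{N}_{+}\right)\otimes_{\mathbb{K}}\mathcal{U}(\khor)$, follows from the PBW theorem for \emph{any} ordering adapted to the splitting $\mf{N}\left(\mathbb{K}[u]\right)=\khor\oplus\mf{N}_{+}$ and does not use that $\mf{N}_{+}$ is an ideal; the ideal property $\left[\khor,\mf{N}_{+}\right]\subseteq\mf{N}_{+}$ only enters if one additionally wants to identify the induced $\khor$-action on $\mathcal{U}\left(\mf{N}_{+}\right)\otimes_{\mathbb{K}}\tV_0$ with the tensor-product action of $\khor$-modules, which is the extra statement the paper's proof establishes but which the lemma itself does not require.
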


\begin{proof}
The inherited gradation on $\tV$ is given by assigning $\mathrm{deg}(u\otimes v)=\mathrm{deg}(u)$. It is infinite-dimensional because $\mathcal{U}\left(\mf{N}_{+}\right)$ is. In order to determine a $\mathbb{K}$-basis of (\ref{eq:induced_V}), consider it as a $\khor$-module
and show that it is isomorphic to the $\mathbb{K}$-tensor product
of $\mathcal{U}\left(\mf{N}_{+}\right)$ and $\tV_0$. Recall that $\khor$
acts on $\mf{N}_{+}$ via the adjoint action from $\mf{N}\left(\mathbb{K}[u]\right)$.
Due to the above decomposition (\ref{eq:UEA-decomposition}) one has for $x\in\mathcal{U}\big(\khor\big)$,
$y\in\mathcal{U}\left(\mf{N}_{+}\right)$ and $v\in V$ that
\begin{align}
xy\otimes v=\left(yx+\left[x,y\right]\right)\otimes v=y\otimes v'+y'\otimes v\label{eq:shifting}
\end{align}
with $y'=\left[x,y\right]\in\mathcal{U}(\mathfrak{N}_{+})$ and $v'=x.v\in V$.
We want to view (\ref{eq:induced_V}) as a $\mathbb{K}$-tensor
product modulo an equivalence relation. The equivalence relation on
$\mathcal{U}\big(\khor\big)\otimes_{\mathbb{K}}\mathcal{U}\left(\mf{N}_{+}\right)\otimes_{\mathbb{K}}V$
induced by $\otimes_{\mathcal{U}\left(\khor\right)}$
now is the multilinear extension of
\begin{align}
x\otimes_{\mathbb{K}}y\otimes_{\mathbb{K}}v\sim1\otimes_{\mathbb{K}}y\otimes_{\mathbb{K}}v'+1\otimes_{\mathbb{K}}y'\otimes_{\mathbb{K}}v,\label{eq:equivalence relation}
\end{align}
where $y'=\left[x,y\right]\in\mathcal{U}(\mf{N}_{+})$, $v'=x.v\in V$ and
\begin{align*}
\mathcal{U}\left(\mf{N}\left(\mathbb{K}[u]\right)\right)\otimes_{\mathcal{U}\left(\khor\right)}V\cong\mathcal{U}\big(\khor\big)\otimes_{\mathbb{K}}\mathcal{U}\left(\mf{N}_{+}\right)\otimes_{\mathbb{K}}V\diagup\sim
\end{align*}
as $\mathbb{K}$-vector spaces. The original equivalence relation in $\mathcal{U}\left(\mf{N}\left(\mathbb{K}[u]\right)\right)\otimes_{\mathbb{K}}V$
is
\begin{align*}
yx\otimes_{\mathbb{K}}v\sim y\otimes_{\mathbb{K}}xv\ \forall\,x\in\mathcal{U}\big(\khor\big)
\end{align*}
but decomposing $yx=xy-\left[x,y\right]$ according to (\ref{eq:UEA-decomposition})
leads to the formulation (\ref{eq:equivalence relation}) of $\sim$.
Eq. (\ref{eq:equivalence relation}) shows that each element $x\otimes_{\mathbb{K}}y\otimes_{\mathbb{K}}v$
is $\sim$-equivalent to an element of $1\otimes_{\mathbb{K}}\mathcal{U}\left(\mf{N}_{+}\right)\otimes_{\mathbb{K}}V\cong\mathcal{U}\left(\mf{N}_{+}\right)\otimes_{\mathbb{K}}V$
where the isomorphism is as $\mathbb{K}$-vector spaces. Since
\begin{align*}
1\otimes_{\mathbb{K}}\mathcal{U}\left(\mf{N}_{+}\right)\otimes_{\mathbb{K}}V\subset\mathcal{U}\big(\khor\big)\otimes_{\mathbb{K}}\mathcal{U}\left(\mf{N}_{+}\right)\otimes_{\mathbb{K}}V
\end{align*}
one deduces that the elements of $\mathcal{U}\big(\khor\big)\otimes_{\mathbb{K}}\mathcal{U}\left(\mf{N}_{+}\right)\otimes_{\mathbb{K}}V\diagup\sim$
are in 1-1-correspondence with the elements of $\mathcal{U}\left(\mf{N}_{+}\right)\otimes_{\mathbb{K}}V$
and hence,

\begin{align}
\mathcal{U}\left(\mf{N}\left(\mathbb{K}[u]\right)\right)\otimes_{\mathcal{U}\left(\khor\right)}V\cong\mathcal{U}\left(\mf{N}_{+}\right)\otimes_{\mathbb{K}}V\label{eq:K-decomposition}
\end{align}
as $\mathbb{K}$-vector spaces. Now $\mathcal{U}\big(\khor\big)$
acts on $\mathcal{U}\left(\mf{N}_{+}\right)\otimes_{\mathcal{U}\left(\khor\right)}V$
via left-multiplication but as a result of (\ref{eq:shifting}) one
has that
\begin{align*}
xy\otimes v=\left[x,y\right]\otimes v+y\otimes xv.
\end{align*}
This action is identical to the action of $\mathcal{U}\big(\khor\big)$
on the $\mathbb{K}$-tensor product of $\khor$-modules
$\mathcal{U}\left(\mf{N}_{+}\right)$ and $V$. Thus, one finds
\begin{align*}
\mathcal{U}\left(\mathfrak{N}\left(\mathbb{K}[u]\right)\right)\otimes_{\mathcal{U}\left(\mathring{\mathfrak{k}}\right)}V\cong\mathcal{U}\left(\mathfrak{N}_{+}\right)\otimes_{\mathbb{K}}V
\end{align*}
as $\khor$-modules. The Kronecker basis of this tensor product also provides a basis for the   $\mathcal{U}\big(\mf{N}\big(\mathbb{K}[u]\big)\!\big)$-module $\tV$ because $\mathcal{U}\left(\mf{N}_{+}\right)\cap\mathcal{U}\big(\khor\big)=\mathbb{K}\cdot1$.

\end{proof}

Due to the $\mathbb{N}$-graded structure on $\mf{N}\left(\mathbb{K}[u]\right)$, $\tV$ has many invariant subspaces. In particular, any
\begin{align}
\label{eq:quotN}
\tV^{(N)} = \bigoplus_{i=N}^\infty \tV_i
\end{align}
for $N>0$ is a proper invariant subspace of $\tV$. The quotient representation 
\begin{align}
\label{quot1}
\tV / \tV^{(N)}
\end{align}
is by construction a finite-dimensional representation of $\mf{N}\left(\mathbb{K}[u]\right)$ for any fixed choice $N$. 
We can now prove the first part of theorem \ref{thm:thm_B}: 
\begin{prop}
\label{prop:truncated_reps}The quotient $\tV / \tV^{(N)}$ defined in~\eqref{quot1} is a finite-dimensional module of $\mf{N}(\mathbb{P}_N)$ and therefore, by propositions~\ref{prop:ansatz for hom} and ~\ref{prop:rhoN}, a representation of $\mf{k}$.
\end{prop}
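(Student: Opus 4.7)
The plan is to exploit the $\mathbb{N}$-grading of $\mf{N}(\mathbb{K}[u])$ and the inherited grading $\tV=\bigoplus_{i\ge 0}\tV_i$ from Lemma~\ref{lem:induced_V}. The argument splits into three essentially structural steps: that $\tV^{(N)}$ is a submodule, that the induced action on the quotient factors through $\mf{N}(\mathbb{P}_N)$, and that the quotient is finite-dimensional. No genuine analytic difficulty is expected; the whole proof is bookkeeping with the gradings.

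First, I would observe that because the grading on $\tV$ is compatible with the graded Lie algebra action, any $x\in\mf{N}^{(j)}$ maps $\tV_i$ into $\tV_{i+j}$. For $i\ge N$ and any $j\ge 0$ one has $i+j\ge N$, so $\tV^{(N)}$ is an $\mf{N}(\mathbb{K}[u])$-submodule and $\tV/\tV^{(N)}$ inherits a well-defined $\mf{N}(\mathbb{K}[u])$-action. Next, I would show that the graded subspace $\mf{J}\coloneqq\bigoplus_{k>N}\mf{N}^{(k)}$ is a Lie ideal of $\mf{N}(\mathbb{K}[u])$, which is immediate from $[\mf{N}^{(j)},\mf{N}^{(k)}]\subseteq\mf{N}^{(j+k)}$ and $j+k>N$ whenever $k>N$. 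The quotient Lie algebra $\mf{N}(\mathbb{K}[u])/\mf{J}$ is then canonically isomorphic to $\mf{N}(\mathbb{P}_N)$, since quotienting by $\mf{J}$ amounts to setting $u^{N+1},u^{N+2},\ldots$ to zero in $\mathbb{K}[u]$, which is exactly how $\mathbb{P}_N=\mathbb{K}[[u]]/\mathcal{I}_N$ is cut out (only finitely many powers of $u$ are involved, so passing from $\mathbb{K}[[u]]$ to $\mathbb{K}[u]$ makes no difference at the level of $\mathbb{P}_N$). Moreover $\mf{J}$ annihilates $\tV/\tV^{(N)}$: for $x\in\mf{N}^{(k)}$ with $k>N$ and $v\in\tV_i$ with $i<N$, the image $x\cdot v$ lies in $\tV_{i+k}$ with $i+k>N$, hence already in $\tV^{(N)}$. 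Therefore the $\mf{N}(\mathbb{K}[u])$-action on $\tV/\tV^{(N)}$ descends to an action of $\mf{N}(\mathbb{P}_N)$.

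Finally, for dimensionality, $\tV/\tV^{(N)}\cong\bigoplus_{i=0}^{N-1}\tV_i$ as $\mathbb{K}$-vector spaces. By the basis description in Lemma~\ref{lem:induced_V}, $\tV_i$ is spanned by PBW monomials in $\mathcal{U}(\mf{N}_+)$ of total degree $i$ tensored with a basis of $\tV_0$. Each homogeneous component $\mf{N}^{(k)}$ is finite-dimensional, being a copy of $\khor$ (for even $k$) or $\phor$ (for odd $k$); only finitely many partitions of $i$ into positive parts exist; and $\tV_0$ is finite-dimensional by assumption. Hence each $\tV_i$ is finite-dimensional and so is the finite direct sum. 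Composing the resulting $\mf{N}(\mathbb{P}_N)$-module structure with the surjective homomorphism $\rho_{\pm}^{(N)}:\mf{k}\to\mf{N}(\mathbb{P}_N)$ of Corollary~\ref{cor:truncated hom} then realises $\tV/\tV^{(N)}$ as a finite-dimensional representation of $\mf{k}$, completing the proof.
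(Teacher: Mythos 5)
Your argument is correct and follows essentially the same route as the paper: the ideal $\mf{J}=\bigoplus_{k>N}\mf{N}^{(k)}$ you quotient by is exactly the paper's $\mathcal{I}_N\otimes(\khor\oplus\phor)\cap\mf{N}\left(\mathbb{K}[u]\right)$, which acts trivially on $\tV/\tV^{(N)}$ by the $\mathbb{N}$-grading, and the identification of the quotient Lie algebra with $\mf{N}(\mathbb{P}_N)$ together with the pull-back along $\rho_{\pm}^{(N)}$ is the same mechanism the paper invokes. You merely spell out in more detail the submodule property, the isomorphism $\mathbb{K}[u]/\mathcal{I}_N\cong\mathbb{K}[[u]]/\mathcal{I}_N$, and the finite-dimensionality of each $\tV_i$ via PBW, all of which the paper leaves implicit.
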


\begin{proof}
This statement follows from the fact that $\mathcal{I}_N\otimes (\khor \oplus \phor)\cap \mf{N}\left(\mathbb{K}[u]\right)$ acts trivially on $\tV / \tV^{(N)}$ by the $\mathbb{N}$-grading and therefore $\tV / \tV^{(N)}$ is a representation of the corresponding quotient. Since the quotients $\mathbb{K}[u]\diagup \mathcal{I}_N$ and $\mathbb{K}[[u]]\diagup \mathcal{I}_N$ are isomorphic, we deduce that $\tV / \tV^{(N)}$ is a finite-dimensional representation of $\mf{N}(\mathbb{P}_N)$ defined in~\eqref{eq:NNdef} and therefore can be pulled back to a representation of $\mf{k}$ by proposition~\ref{prop:rhoN}.
\end{proof}

\begin{rem}
The quotient algebra acts non-trivially on $\tV / \tV^{(k)}$ and is given by all $\mf{k}$ generators of degree at most $k$. While all other elements of  the parabolic model $\mf{N}\left(\mathbb{K}[u]\right)$ act trivially, we deduce from proposition~\ref{prop:ansatz for hom} that
{\em infinitely} many generators of $\mf{k}$  act non-trivially.
\end{rem}

Other invariant subspaces of $\tV$ can be considered by taking any vector $w\in \tV$ and considering the subrepresentation $\tW\subset \tV$ that it generates under the action of $\mf{N}\left(\mathbb{K}[u]\right)$. A natural choice would be to select some irreducible $\khor$ representation $\tW_k$ within one of the $\tV_k$. Clearly, the submodule $\tW$ generated by $\tW_k$ is a subspace of $\tV^{(k)}$. In general, it can be of arbitrary co-dimension in it. The representation of $\mf{N}\left(\mathbb{K}[u]\right)$ we are interested in then is the quotient
\begin{align}
\label{quot2}
\tV/\tW\,.
\end{align}
If this quotient is finite-dimensional it can again be pulled back to a representation of $\mf{k}$ since it is a quotient of one of the spaces $\tV / \tV^{(N)}$ described above. 
Analysing~\eqref{quot2} in general is more complicated 
than in the case~\eqref{quot1}. To illustrate this point, we shall analyse an explicit example for $\mf{e}_9$ in the next section~\ref{sec:Specialisation}. But first we prove the second part of theorem \Ref{thm:thm_B}.

\begin{prop}\label{prop:rf}
Recall the graded decomposition (\ref{eq:V}) of $\mf{V}$ and consider its formal completion 
\begin{align}
\overline{\mf{V}}\coloneqq\left\{ \left(v_{i}\right)\ \vert\ v_{i}\in \mf{V}_{i}\right\} .
\end{align}
On $\overline{\mf{V}}$ there exists a natural, faithful $\mf{N}\left(\mathbb{K}[[u]]\right)$-action. Furthermore, $\overline{\mf{V}}$ is the inverse limit of the modules of proposition \ref{prop:truncated_reps} and it is spinorial as a $\mf{k}$-representation if the $\khor$-representation $\tV_0$ that induces $\tV$ is. Also, faithfulness shows that $\mf{k}$ is residually finite-dimensional, that is, for each non-trivial element $x\in\mf{k}$ there exists a finite-dimensional representation on which $x$ acts non-trivially.\footnote{We believe this property to be known already for affine $\mf{g}(A)$, although we were unable to find a reference. Hence, our proposition just reproves residual finiteness of $\mf{k}$ as a byproduct.}
\end{prop}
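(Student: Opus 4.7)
The plan is to establish the four claims of the proposition in sequence: defining the action, proving its faithfulness, identifying $\overline{\mf{V}}$ as the inverse limit together with the spinorial property, and deducing residual finite-dimensionality. First, one defines the action degree-wise: for $X = \sum_{k \geq 0} X_k$ with $X_k \in \mf{N}^{(k)}$ and $(v_i)_{i \geq 0} \in \overline{\mf{V}}$, set
\begin{align*}
X \cdot (v_i) \coloneqq (w_j)_{j \geq 0}, \qquad w_j \coloneqq \sum_{k=0}^{j} X_k \cdot v_{j-k}.
\end{align*}
Each $w_j$ is a finite sum lying in $\tV_j$ since the $\mf{N}(\mathbb{K}[u])$-action on $\tV$ of lemma~\ref{lem:induced_V} takes $\mf{N}^{(k)}\otimes \tV_{j-k}$ into $\tV_j$. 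Checking the Lie bracket identity reduces to a component-wise verification: the convolution bracket on $\mf{N}(\mathbb{K}[[u]])$ matches the composition of actions at each fixed degree, which is a finite calculation inside the already-known $\mf{N}(\mathbb{K}[u])$-module structure.

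For faithfulness, I would assume $\tV_0$ to be a faithful $\khor$-module, which is automatic for spin representations under the running assumption $\mf{z}(\khor)=0$. Given a nonzero $X \in \mf{N}(\mathbb{K}[[u]])$, let $k_0$ be the smallest index with $X_{k_0} \neq 0$, so that $X_{k_0} = u^{k_0} \otimes y$ with $y$ nonzero in $\khor$ or $\phor$ according to parity. The aim is to find $v_0 \in \tV_0$ with $X_{k_0} \cdot v_0 \neq 0$: for $k_0 \geq 1$ any nonzero $v_0$ works because $(u^{k_0} \otimes y) \otimes v_0$ is a nonzero PBW basis vector of $\tV_{k_0}$ by lemma~\ref{lem:induced_V}, and for $k_0 = 0$ faithfulness of $\tV_0$ supplies the required $v_0$. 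Then $X \cdot (v_0, 0, 0, \ldots)$ has nonzero $k_0$-th component, so the action is faithful.

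The inverse-limit identification is essentially definitional: the natural projections $\pi_N: \overline{\mf{V}} \to \tV/\tV^{(N)}$ sending $(v_i)$ to the class of $\sum_{i<N} v_i$ intertwine the $\mf{N}(\mathbb{K}[[u]])$-action with the $\mf{N}(\mathbb{P}_N)$-action composed with $\mathrm{Pr}_N$, and they commute with the bonding maps $\tV/\tV^{(N+1)} \to \tV/\tV^{(N)}$; the universal property then yields $\overline{\mf{V}} \cong \varprojlim_N \tV/\tV^{(N)}$ as $\mf{N}(\mathbb{K}[[u]])$-modules. The spinorial property follows since each graded piece decomposes as $\tV_i \cong \mathcal{U}_i \otimes_{\mathbb{K}} \tV_0$ of $\khor$-modules with $\mathcal{U}_i$ classical (built from tensor powers of $\khor$ and $\phor$ under the adjoint action), so tensoring with a spinorial $\tV_0$ keeps each $\tV_i$ spinorial and hence $\overline{\mf{V}}$ spinorial, both as a $\khor$- and, via $\rho_\pm$, as a $\mf{k}$-module. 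Residual finite-dimensionality then follows at once: for $0 \neq X \in \mf{k}$, injectivity of $\rho_\pm$ gives $\rho_\pm(X) \neq 0$; by faithfulness some $(v_i)$ satisfies $\rho_\pm(X) \cdot (v_i) \neq 0$ with nonzero component in some finite degree $j_0$; choosing $N > j_0$, the image of $(v_i)$ in $\tV/\tV^{(N)}$ is a finite-dimensional $\mf{k}$-module (by proposition~\ref{prop:truncated_reps}) on which $X$ acts nontrivially.

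The principal obstacle is the faithfulness step, as it must combine the completion structure of $\mf{N}(\mathbb{K}[[u]])$ with the PBW-freeness of $\tV$ from lemma~\ref{lem:induced_V} and be explicit about the hypothesis on $\tV_0$ at the degree zero level; the remaining parts amount to bookkeeping with the $\mathbb{N}$-grading and the universal property of inverse limits.
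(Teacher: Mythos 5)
Your construction of the action, the inverse-limit identification, the spinoriality argument and the deduction of residual finite-dimensionality all coincide with the paper's proof. The one place where you diverge is the faithfulness step in the degree-zero case, and there you have introduced a hypothesis that the proposition does not make: you assume that $\tV_0$ is a faithful $\khor$-module. The proposition asserts faithfulness of the $\mf{N}\left(\mathbb{K}[[u]]\right)$-action on $\overline{\tV}$ for an arbitrary finite-dimensional $\khor$-module $\tV_0$ (only the spinoriality clause is conditional), and faithfulness of $\tV_0$ is genuinely restrictive --- $\tV_0$ could be trivial, or trivial on a simple ideal of $\khor$. Your argument for $k_0\geq 1$ is fine and is exactly the paper's: the degree-$k_0$ component of $X\cdot(v_0,0,0,\dots)$ is $X_{k_0}\otimes v_0$, a nonzero multiple of a PBW-basis vector of $\tV_{k_0}\cong\mathcal{U}_{k_0}\otimes\tV_0$ by lemma~\ref{lem:induced_V}. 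But for $k_0=0$ your argument covers only faithful $\tV_0$.

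The paper closes that case without any hypothesis on $\tV_0$: when only $x_0\neq 0$, one does not test $x_0$ on $(v_0,0,0,\dots)$ but on a vector with a nonzero component in positive degree. Since $x_0\in\khor$ acts on $\tV_\ell\cong\mathcal{U}_\ell\otimes\tV_0$ by $x_0\cdot(w\otimes v)=[x_0,w]\otimes v+w\otimes x_0v$, and since $\mathcal{U}\left(\mf{N}_{+}\right)$ is a faithful $\khor$-module under the adjoint action (already $\mf{N}^{(1)}\cong\phor$ is faithful: an element of $\khor$ annihilating $\phor$ would be central in the simple algebra $\ghor$), one can choose $w$ with $[x_0,w]\neq 0$, so that $x_0\cdot(w\otimes v_0)\neq 0$ for any $v_0\neq 0$. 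You should add this case. Note that your final conclusion about residual finite-dimensionality of $\mf{k}$ is unaffected, since for that application one is free to choose $\tV_0$ faithful; it is the faithfulness claim of the proposition itself that your argument leaves incomplete.
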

\begin{proof}
The natural action is given by 
\begin{align}
\label{eq:action on formal V}
v\mapsto x\cdot v\quad \text{with} \quad (x\cdot v)_i = \sum_{j=0}^{i}x_{j} \cdot v_{i-j} \quad \text{for $i\in \mathbb{N}$}
\end{align}
for $x=\left(x_{i}\right)\in\mf{N}\left(\mathbb{K}[[u]]\right)$
and $v=\left(v_{i}\right)\in\overline{\mf{V}}$. Since only $x_i$ and $v_{N-i}$ enter at degree $N$ this action commutes with the projection to $\overline{\mf{V}}\diagup\overline{\mf{V}}^{(i)}$, where $\overline{\mf{V}}^{(i)}$ denotes the formal completion of $\mf{V}^{(i)}$. As $\overline{\mf{V}}\diagup\overline{\mf{V}}^{(i)}\cong\mf{V}\diagup\mf{V}^{(i)}$, one can check that $\overline{\mf{V}}$ is the inverse limit of the finite-dimensional representations $\mf{V}\diagup\mf{V}^{(i)}$ by also establishing that the elements in $\overline{\mf{V}}$ and the inverse limit of the $\mf{V}\diagup\mf{V}^{(i)}$ are in 1-1-correspondence. 
The representation is faithful , because for $\left(x_i\right)\in\mf{N}\left(\mathbb{K}[[u]]\right)$ one can pick $\left(v_0,0,0,\dots\right)\in\overline{\tV}$ with $0\neq v_0\in\tV_0$. Since $\mf{N}^{(k)}$ is part of the PBW-basis of $\mathcal{U}\left(\mf{N}\left(\mathbb{K}[u]\right)\right)$, there always exists a $k>0$ and $0\neq v_0$ such that $x_k.v_0\neq0\in\overline{\tV}$ according to lemma \ref{lem:induced_V}. The only exception is when $\left(x_i\right)\in\mf{N}\left(\mathbb{K}[[u]]\right)$ is such that only $x_0\neq0$. Even if $\tV_0$ is the trivial module, the action of this element is nontrivial because $\mathcal{U}\left(\mf{N}_{+}\right)$ is a faithful $\khor$-module. Spinoriality of $\tV_0$ implies that of $\tV$ because $\khor$ maps to $\mf{N}^{(0)}\subset \mf{N}\left(\mathbb{K}[[u]]\right)$.
\end{proof}

\section{\texorpdfstring{A detailed example: $\mf{k}(\mf{e}_9)$}{A detailed example: k(e9)}}
\label{sec:Specialisation}

In this section, we illustrate the general considerations from the previous sections in the case of $\mf{k}(\mf{e}_9)$ where $\mf{g}=\mf{e}_9 \equiv \mf{e}_8^{(1)}$ denotes the non-twisted 
affine extension of the split real exceptional Lie algebra $\ghor=\mf{e}_8$ (also denoted
as $\mf{e}_{8(8)}$). The algebra 
$\mf{k}(\mf{e}_9)$ enters in the fermionic sector of maximal supergravity in $D=2$ dimensions where unfaithful representations have been found previously~\cite{Nicolai:2004nv} and is therefore of particular interest in physics.

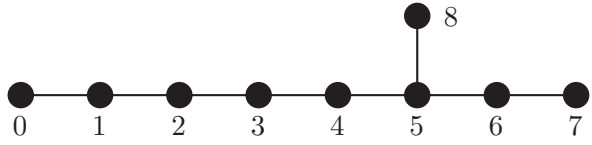
\begin{figure}[t!]
\centering
\begin{picture}(230,50)
\thicklines
\multiput(10,10)(30,0){8}{\circle*{10}}
\put(10,10){\line(1,0){210}}
\put(160,40){\circle*{10}}
\put(160,10){\line(0,1){30}}
\put(7,-5){$0$}
\put(37,-5){$1$}
\put(67,-5){$2$}
\put(97,-5){$3$}
\put(127,-5){$4$}
\put(157,-5){$5$}
\put(187,-5){$6$}
\put(217,-5){$7$}
\put(170,36){$8$}
\end{picture}
\caption{\label{fig:e9dynk}{\sl Dynkin diagram of $E_{9}$ with nodes labelled.}}
\end{figure}

\subsection{\texorpdfstring{The affine algebra $\mf{e}_9$}{The affine algebra e9}}

The Cartan decomposition of $\mf{e}_8$ is
\begin{align}
\label{e8}
\mf{e}_8 = \khor \oplus \phor  \quad\text{with}\quad \khor\cong\mf{so}(16) 
\end{align}
and $\phor$ is a $128$-dimensional irreducible spinor representation of $\mf{so}(16)$. To explicitly parametrise the symmetric space decomposition we introduce the following adapted basis of generators
\begin{align}
\label{e8bas}
\mf{so}(16) &= \big\langle X^{IJ} \,\big|\, X^{IJ}=-X^{JI}\,,\quad I,J=1,\ldots 16 \big\rangle\,,\nn\\
\phor &= \big\langle Y^A \,\big|\, A=1,\ldots, 128\big\rangle\,,
\end{align}
so that the $X^{IJ}$ are the exterior square of the defining $16$-dimensional representation of $\mf{so}(16)$ and the $Y^A$ are in an irreducible spinor representation. The conjugate spinor representation will be denoted with dotted indices $\dot{A}$ and will play a role in the construction of $\mf{k}(\mf{e}_9)$ representations below.

In order to write the $\mf{e}_8$ commutation relations in the basis~\eqref{e8bas}, we utilise $\mf{so}(16)$ gamma matrices $\Gamma^I_{A\dot{A}}$ where $A\dot{A}$ denote the matrix components of degree one elements in the Clifford algebra of $16$-dimensional Euclidean space. These are real 128-by-128 matrices satisfying the  Clifford multiplication law
\begin{align}
\label{gammas}
\Gamma^I_{A\dot{A}} \Gamma^J_{B\dot{A}} + \Gamma^J_{A\dot{A}} \Gamma^I_{B\dot{A}} = 2 \delta^{IJ} \delta_{AB} 
\end{align}
where repeated indices are summed over (a convention we shall employ throughout this section) and $\delta^{IJ}$ and $\delta_{AB}$ are the Euclidean $\mf{so}(16)$-invariant bilinear forms in the representation spaces. $k$-fold antisymmetric products of gamma matrices are written as $\Gamma^{I_1\ldots I_k}$ normalised with a $1/k!$ times the signed sum over permutations of $I_1,\ldots , I_k$, e.g.
\begin{align}
\Gamma^{I_1I_2}_{AB} &= \frac1{2!} \left( \Gamma^{I_1}_{A\dot{A}} \Gamma^{I_2}_{B\dot{A}} - \Gamma^{I_2}_{A\dot{A}} \Gamma^{I_1}_{B\dot{A}}\right) \,,\nn\\
\Gamma^{I_1I_2}_{\dot{A}\dot{B}} &= \frac1{2!} \left( \Gamma^{I_1}_{A\dot{A}} \Gamma^{I_2}_{A\dot{B}} - \Gamma^{I_2}_{A\dot{A}} \Gamma^{I_1}_{A\dot{B}}\right) \,,\nn\\
\Gamma^{I_1I_2I_3}_{A\dot{B}} &= \frac1{3!} \left( \Gamma^{I_1}_{A\dot{A}} \Gamma^{I_2}_{B\dot{A}}\Gamma^{I_3}_{B\dot{B}} \pm\text{$5$ permutations}\right) \,.
\end{align}
For more details on the use of these gamma matrices see~\cite{Nicolai:2004nv,Kleinschmidt:2006dy}.

The explicit commutations relations of~\eqref{e8} in the basis~\eqref{e8bas} are then given by
\begin{subequations}
\label{e8rels}
\begin{align}
\lb X^{IJ} , X^{KL} \rb &= \delta^{JK} X^{IL} - \delta^{IK} X^{JL} - \delta^{JL} X^{IK} + \delta^{IL} X^{JK}\,,\\
\lb X^{IJ} , Y^A \rb &= -\frac12  \Gamma^{IJ}_{AB} Y^B\,,\\
\lb Y^A, Y^B \rb &= \frac14 \Gamma^{IJ}_{AB} X^{IJ}\,.
\end{align}
\end{subequations}
The first line is just the $\khor\cong \mf{so}(16)$ Lie algebra while the others are manifestations of the Cartan decomposition's properties $\big[\khor,\phor\big]\subset \phor$ and $\big[\phor,\phor\big]= \khor$. In particular, the last relation can be viewed as a projection from $\text{Alt}^2(\phor)$ to the factor that is isomorphic to the adjoint module of $\khor$. 

The affine algebra is then obtained according to~\eqref{eq:aff} by introducing the loop generators
\begin{align}
\label{loopgens}
X_m^{IJ} = t^m \otimes X^{IJ}\,,\quad Y^A_m = t^m\otimes Y^A
\end{align}
for $m\in \mathbb{Z}$. The commutation relations, including the central element $K$ and derivation $d$, are then given by 
\begin{align}
\lb X^{IJ}_m , X^{KL}_n \rb &= \delta^{JK} X^{IL}_{m+n} - \delta^{IK} X^{JL}_{m+n} - \delta^{JL} X^{IK}_{m+n} + \delta^{IL} X^{JK}_{m+n}\nn\\
&\hspace{5mm}  -m \left(\delta^{IK}\delta^{JL}- \delta^{JK} \delta^{IL}\right) \delta_{m,-n} K\,,\\[2mm]
\lb X^{IJ}_m , Y^A_n \rb &= -\frac12  \Gamma^{IJ}_{AB} Y^B_{m+n}\,,\\[2mm]
\lb Y^A_m, Y^B_n \rb &= \frac14 \Gamma^{IJ}_{AB} X^{IJ}_{m+n} + m \delta^{AB} \delta_{m,-n} K\,,
\\[2mm]
\lb d , X^{IJ}_m \rb &= - m X^{IJ}_m\;\; , \quad
\lb d , Y^A_m \rb = - m Y^A_m\,,
\end{align}
and $K$ commutes with everything.

As is well known, there is an action of the Virasoro algebra on loop 
algebras~\cite{Kac1990,Goddard:1986bp}. The Virasoro algebra is a 
central extension of the Witt algebra generated by the operators
\begin{align}
L_m = -t^{m+1} \frac{d}{dt} \quad(m\in\mathbb{Z})
\end{align}
acting on Laurent polynomials and the non-trivial commutators with the $\mf{e}_9$ loop generators are
\begin{align}
\lb L_m, X^{IJ}_n \rb = - n X^{IJ}_{m+n} \,, \quad \lb L_m, Y^A_n \rb = - nY^A_{m+n} \,.
\end{align}
$L_0$ therefore acts like $d$. The commutators among the Virasoro generators is
\begin{align}
\lb L_m, L_n \rb = (m-n) L_{m+n} + \frac{c_{\text{Vir}}}{12} m (m^2-1) \delta_{m,-n} K
\end{align}
with the same central element $K$ as in the affine algebra and $c_{\text{Vir}}$ a free coefficient. 
In any given highest or lowest weight representation of the affine algebra, one can find a realisation of the Virasoro algebra in the universal enveloping algebra of the loop algebra via the Sugawara construction and this fixes $c_{\text{Vir}}$, see~\cite{Goddard:1986bp} for a review.

\subsection{\texorpdfstring{Involutory subalgebra $\mf{k}(\mf{e}_9)$}{Involutory subalgebra k(e9)}}

The involutory subalgebra $\mf{k}\equiv \mf{k}(\mf{e}_9)$ was defined in~\eqref{eq:kdef}. We shall give explicit forms of the filtered structure and the parabolic model discussed in sections~\ref{sec:aff} and~\ref{sec:par}, respectively.

\subsubsection{Filtered basis}

We recall that the central element $K$ and the derivation $d$ are not invariant under the involution $\omega$ and therefore not part of $\mf{k}(\mf{e}_9)$.
The generators of $\mf{k}(\mf{e}_9)$ can be expressed in terms of~\eqref{loopgens} according to
\begin{subequations}
\label{ke9bas1}
\begin{align}
\cX^{IJ}_{m} &= \frac12 (t^m +t^{-m})\otimes X^{IJ} \equiv \frac12 \left(X_m^{IJ} + X_{-m}^{IJ}\right)\,,\\
\cY^A_{m} &= \frac12 (t^m - t^{-m})  \otimes Y^A \equiv\frac12\left( Y^A_m - Y^A_{-m}\right) \,.
\end{align}
\end{subequations}
where $t$ is the spectral parameter and $m\in \mathbb{N}$. Note that $\cY^A_0\equiv 0$. Here, we have introduced a factor of $1/2$ such that $\cX_0^{IJ} = X_0^{IJ}$ is also an $\mf{so}(16)$ Lie algebra with the same normalisation.

The complete $\mf{k}(\mf{e}_9)$ Lie algebra in the basis~\eqref{ke9bas1} reads
\begin{subequations}
\label{KE9rels1}
\begin{align}
\label{eq:XX16}
\lb \cX_{m}^{IJ} ,\cX_{n}^{KL}\rb &= \frac12\delta^{JK} \left(\cX_{m+n}^{IL} + \cX_{|m-n|}^{IL} \right) - \frac12\delta^{IK} \left(\cX_{m+n}^{JL} + \cX_{|m-n|}^{JL} \right) \nn\\
&\hspace{10mm}-  \frac12\delta^{JL} \left(\cX_{m+n}^{IK} + \cX_{|m-n|}^{IK} \right) +  \frac12\delta^{IL} \left(\cX_{m+n}^{JK} + \cX_{|m-n|}^{JK} \right)
\,,\\[2mm]
\label{eq:XY16}
\lb \cX_{m}^{IJ}, \cY_{n}^A \rb &= -\frac14 \Gamma^{IJ}_{AB} \left( \cY_{m+n}^B - \text{sgn}(m-n) \cY_{|m-n|}^B\right)\,,\\[2mm]
\label{eq:YY16}
\lb \cY_{m}^A, \cY_{n}^B \rb &= \frac18 \Gamma^{IJ}_{AB} \left( \cX_{m+n}^{IJ} - \cX_{|m-n|}^{IJ}\right)\,,
\end{align}
\end{subequations}
where $\text{sgn}(k)$ is the sign function with $\text{sgn}(0)=0$.

In this $\mf{so}(16)$-covariant formulation, $\mf{k}(\mf{e}_9)$ can be described as the algebra generated by the $\cX_0^{IJ}$, $\cX^{IJ}_1$ and $\cY^A_1$ with the property that the $\cX^{IJ}_0$ form an $\mf{so}(16)$ algebra and that $\cX^{IJ}_1$ and $\cY^A_1$ transform correctly under this algebra according to~\eqref{KE9rels1}. The $\cX_0^{IJ}$, $\cX^{IJ}_1$ and $\cY^A_1$ obey
\begin{subequations}
\label{eq:ber1}
\begin{align}
7\, \Gamma^{IJ}_{AB}  \lb \cY_{1}^A , \cY_{1}^B \rb  -32 \lb \cX_{1}^{IK} ,\cX_{1}^{KJ}\rb  &=-448 \cX_{0}^{IJ}\,,\\
\Gamma^{I_1\ldots I_6}_{AB} \lb \cY_{1}^A , \cY_{1}^B \rb &=0\,.
\end{align}
\end{subequations}
These are the $\mf{so}(16)$-covariant forms of the Berman relation $\lb x_0, \lb x_0,x_1\rb\rb=-x_1$ in a Chevalley--Serre basis 
\begin{align}
x_i\coloneqq e_i-f_i
\end{align}
and where we use the convention that $0$ is the affine node of the 
$\mf{e}_9$ Dynkin diagram that attaches to the adjoint node, labelled $1$, of the $\mf{e}_8$ Dynkin diagram, see figure~\ref{fig:e9dynk}.
To write out the `affine' Berman generator $x_0$ explicitly in terms of the basis
(\ref{ke9bas1}), we need to make use of the SO(8) decompositions (A.9) in Appendix A 
of \cite{Kleinschmidt:2006dy}; using the notation and transformations of Appendix B of that reference there we have
\begin{equation}
x_0 =-\frac12 \gamma^1_{\alpha\dot\beta} \big( \cX_1^{\alpha\dot\beta} 
                + \cY_1^{\alpha\dot\beta} \big)
\end{equation}
with the SO(8) gamma matrices $\gamma^i_{\alpha\dot\beta}$ (for $i=1,...,8$). The remaining
 Berman generators in this basis are given by
 \begin{align}
 x_i =  \frac14\left(\gamma^{i,i+1}_{\alpha\beta} \cX_0^{\alpha\beta} +
     \gamma^{i,i+1}_{\dot\alpha\dot\beta} \cX_0^{\dot\alpha\dot\beta} \right) \;\;\; \mbox{(for $i=1,...,7$)} \;\;, 
\quad  x_8 = -\frac12 \gamma^{678}_{\alpha\dot\beta} \cX_0^{\alpha\dot\beta} \,.
\end{align}

The Virasoro algebra likewise can be restricted to an involutory subalgebra~\cite{Julia}.
The `maximal compact' subalgebra of the Virasoro algebra is generated by 
\begin{equation}
\label{comVir}
\cK_m \coloneqq L_m - L_{-m} \,\equiv  \, - \big( t^{m+1} - t^{-m+1} \big) \frac{d}{dt}   \qquad \mbox{for $m\geq 1$.}
\end{equation}
The relevant commutation relations when acting on $K(\mf{e}_9)$ in the basis~\eqref{ke9bas1} read
\begin{align}
\lb \cK_m\,,\, \cX^{IJ}_{n} \rb  & =  - n \big( \cX^{IJ}_{m+n}  - \cX^{IJ}_{|m-n|} \big)    \nn\\[2mm]
\lb \cK_m\,,\, \cY^{A}_{n} \rb  & =  - n \big( \cY^{A}_{m+n}  - \text{sgn}(m-n)   \cY^{A}_{|m-n|} \big)
\end{align}
and
\begin{equation}
\lb \cK_m \,,\, \cK_n \rb \,=\, (m-n) \cK_{m+n}  - {\rm sign}(m-n) (m+n) \cK_{|m-n|}
\end{equation}
Note that the central term drops out here as well.

\subsubsection{Parabolic model}

As discussed in sections~\ref{sec:par} and~\ref{sec:verm}, it is very useful for constructing representations of $\mf{k}(\mf{e}_9)$ to consider the parabolic algebras $\mf{N}\left(\mathbb{K}[[u]]\right)$ and $\mf{N}\left(\mathbb{K}[u]\right)$ defined in~\eqref{eq:Ndef} and~\eqref{eq:Ndec}, respectively. Since polynomials have a graded product,  $\mf{N}\left(\mathbb{K}[u]\right)$ is a graded Lie algebra. We write the variable of the polynomial as $u$ as in section~\ref{sec:par} and it is related to the variable $t$ in the filtered basis by $u=\frac{1-t}{1+t}$ so that expansions around $u=0$ are expansions around $t=1$ and vice versa. 

The definition of the basis generators of $\mf{N}\left(\mathbb{K}[u]\right)$ is then explicitly
\begin{align}
\label{ke9bas2}
\tA_{2m}^{IJ} \coloneqq u^{2m} \otimes X^{IJ} \,,
\quad\quad\quad
\tS_{2m+1}^A \coloneqq u^{2m+1} \otimes Y^A
\quad\quad \text{for $m\in\mathbb{N}$}\,.
\end{align}

The Lie algebra of these generators is graded and given by 
\begin{subequations}
\label{ke9rels2}
\begin{align}
\lb \tA_{2m}^{IJ} , \tA_{2n}^{KL} \rb &= \frac12 \delta^{JK} \tA_{2(m+n)}^{IL}- \frac12 \delta^{IK} \tA_{2(m+n)}^{JL}- \frac12 \delta^{JL} \tA_{2(m+n)}^{IK}+ \frac12 \delta^{IL} \tA_{2(m+n)}^{JK}
\,,\\[2mm]
\lb \tA_{2m}^{IJ} , \tS_{2n+1}^{KL} \rb &= -\frac14\Gamma^{IJ}_{AB} \tS_{2(m+n)+1}^{B}\,,\\[2mm]
\label{ke9rels2c}
\lb \tS_{2m+1}^A, \tS_{2n+1}^B \rb &= \frac18 \Gamma^{IJ}_{AB} \tA_{2(m+n+1)}^{IJ}\,.
\end{align}
\end{subequations}
The generators of $\mf{N}\left(\mathbb{K}[[u]]\right)$ also include infinite linear combinations of~\eqref{ke9bas2} since $\mf{N}\left(\mathbb{K}[[u]]\right)$ is constructed using power series rather than polynomials.

The maps (\ref{eq:ansatz for k}) and (\ref{eq:ansatz for p}) from the filtered to the
parabolic bases now read
\begin{equation}
\rho_\pm (\cX_n^{IJ} ) = (\pm 1)^n \frac12\sum_{k\geq 0} a^{(n)}_{2k} \tA_{2k}^{IJ} \quad , \qquad
\rho_\pm (\cY_n^A) = (\pm 1)^n \frac12 \sum_{k\geq 0} a^{(n)}_{2k+1} \tS_{2k+1}^A\,,
\end{equation}
where the factors of $\tfrac12$ are due to our definition~\eqref{ke9bas1}.
More specifically,
the images of the first few generators of $\mf{k}(\mf{e}_9)$ according to proposition~\ref{prop:ansatz for hom} are given in the above basis by 
\begin{align}
\label{par2tri}
\rho_+ \left( \cX_{0}^{IJ}\right) &=  \,\tA_{0}^{IJ}\,,\nn\\
\rho_+ \left( \cX_{1}^{IJ}\right) &=  \, \tA_{0}^{IJ} + 2 \sum_{k\geq 1} \tA_{2k}^{IJ}\,,\nn\\
\rho_+ \left( \cX_{2}^{IJ}\right) &=  \, \tA_{0}^{IJ} + 8 \sum_{k\geq 1} k \, \tA_{2k}^{IJ}\,,\nn\\
\rho_+ \left( \cX_{3}^{IJ}\right) &=  \, \tA_{0}^{IJ} + 2 \sum_{k\geq 1} (1+8 k^2)  \, \tA_{2k}^{IJ}\, \nn\\
\rho_+ \left( \cX_{4}^{IJ}\right) &=  \, \tA_{0}^{IJ} + \frac{32}{3} \sum_{k\geq 1} (k+2 k^3)  \, \tA_{2k}^{IJ}\, \nn\\
\rho_+ \left( \cX_{5}^{IJ}\right) &=  \, \tA_{0}^{IJ} + \frac{2}{3} \sum_{k\geq 1} (3+40k^2+32 k^4)  \, \tA_{2k}^{IJ}
\end{align}
for the $\cX^{IJ}_m$. For the $\cY^A_m$ the relations read
\begin{align}
\rho_+\left( \cY_1^A \right)&=  - 2 \sum_{k\geq 0} \tS^A_{2k+1}\,,\nn\\
\rho_+\left( \cY_2^A \right)&=  - 4 \sum_{k\geq 0} (2k +1) \tS_{2k+1}^A\,,\nn\\
\rho_+\left( \cY_3^A \right)&=  - 2 \sum_{k\geq 0} (8k^2+8k+3) \tS^A_{2k+1}\,,\nn\\
\rho_+\left( \cY_4^A \right) &=  - \frac{8}3 \sum_{k\geq 0} (3+10k+12k^2+8k^3) \tS_{2k+1}^A\,,\nn\\
\rho_+\left( \cY_5^A \right) &=  - \frac23 \sum_{k\geq 0} (15+56k+88k^2+64k^3+32k^4) \tS_{2k+1}^A\,.
\end{align}

We note that, unlike~\eqref{eq:ber1}, there is no known presentation of $\mf{N}\left(\mathbb{K}[[u]]\right)$ as a finitely generated algebra, say by $\tA_0^{IJ}$ and $\tS_1^A$, with a finite number of Berman-like relations.
Using the algebra~\eqref{ke9rels2} we find the following Berman-type relations for all $k\geq 1$
\begin{subequations}
\label{eq:ber162}
\begin{align}
7 \,\Gamma^{IJ}_{AB} \sum_{\substack{k_1\,,\,k_2\geq 0\\k_1+k_2=k-1}} \lb \tS_{2k_1+1}^A \,,\, \tS_{2k_2+1}^B \rb -32\sum_{\substack{k_1,k_2\geq 1\\k_1+k_2=k}} \lb \tA_{2k_1}^{IK} , \tA_{2k_2}^{KJ} \rb &=
448
\tA_{2k}^{IJ} \,,\\
\Gamma^{I_1\ldots I_6}_{AB} \sum_{\substack{k_1\,,\,k_2\geq 0\\k_1+k_2=k-1}} \lb \tS_{2k_1+1}^A \,,\, \tS_{2k_2+1}^B \rb &=0\,,
\end{align}
\end{subequations}
where we have evaluated the commutators involving $\tA_0$ in the first line.

The maximal compact Virasoro generators $\cK_m$ introduced in~\eqref{comVir} can also be expressed using the variable $u$. In particular,
\begin{equation}
\cK_1 = L_1 - L_{-1} = \big( 1 - t^2) \frac{d}{dt}  = 2u \frac{d}{du}
\end{equation}
generates an SO(1,1) group, and acts as a counting operator on the basis of the
parabolic model~\eqref{ke9bas2}:
\begin{equation}
\lb \cK_1 \,,\, \tA_{2k}^{IJ} \rb \,=\, 4k \,\tA_{2k}^{IJ} \;\; , \quad
\lb \cK_1 \,,\, \tS_{2k+1}^A \rb \,=\,  2(2k+1) \, \tS^A_{2k+1}
\end{equation}

More generally the operators $\cK_m$ admit the following realisation as differential
operators
\begin{equation}
\cK_m \,=\, \frac12 (\pm 1)^m \frac1{(1 - u^2)^{m-1}} \big[ (1 - u)^{2m} - (1 +u)^{2m} \big] \frac{d}{du}
\end{equation}
Proceeding as before we find for instance
\begin{equation}
[ \cK_2, \tA_{2m}^{IJ} ] \,=\, 8m \big( \tA_{2m}^{IJ} + 2\tA_{2m+2}^{IJ}+ 2\tA_{2m+4}^{IJ} + \cdots \big)
\end{equation}
so for $m\geq 2$ these operators mix all levels.

\subsection{Representations}

Representations of $\mf{k}(\mf{e}_9)$ can be constructed via the technique described in section~\ref{sec:verm}. This means that we construct a basis of the universal enveloping algebra  of $\mf{N}_{+}\subset\mf{N}$ (given by all generators in~\eqref{ke9bas2} with degree greater than 0) and let this act on a $\khor\cong \mf{so}(16)$ representation $\tV_0$ as the initial vector space. We shall exploit that everything is $\mf{so}(16)$ covariant and graded.

\subsubsection{Universal enveloping algebra basis}

The basis~\eqref{eq:vermbas} of $\mathcal{U}\left(\mf{N}_{+}\right)$ at the first few levels becomes 
\begin{subequations}
\begin{align}
\ell=0: \quad&1\\
\ell=1:\quad & \tS_1^A\\
\ell=2: \quad& \tS_1^{(A} \tS_1^{B)} ,\quad  \tA_2^{\alpha}\\
\ell=3: \quad& \tS_1^{(A} \tS_1^B \tS_1^{C)},\quad  \tS_1^A \tA_2^{\alpha} ,\quad \tS_3^A\\
\ell=4: \quad& \tS_1^{(A} \tS_1^B \tS_1^C \tS_1^{D)},\quad  \tS_1^{(A} \tS_1^{B)} \tA_2^{\alpha}  ,\quad  \tS_1^B \tS_3^A,\quad \tA_2^{(\alpha} \tA_2^{\beta)},\quad \tA_4^{\alpha}
\end{align}
\end{subequations}
where we now use $\alpha\equiv [IJ]$ for $I<J$ to denote an adjoint index of $\mf{so}(16)$.
The symmetrisations $(\cdots)$ for similar generators on the same level are necessary to 
implement the ordering in accordance with the PBW theorem. 

In terms of $\mf{so}(16)$ representations the first levels $\mathcal{U}_\ell$ of $\mathcal{U}(\mf{N}_{+})$ are\footnote{The parentheses are used to group the representations according to the different words in the induced representation module list after decomposing into $\mf{so}(16)$.}
\begin{subequations}
\begin{align}
\ell =0:\quad& {\bf 1}\\
\ell=1: \quad & {\bf 128}_s\\
\ell=2: \quad & \Big( {\bf 1}\oplus {\bf 1820}\oplus {\bf 6435}_+\Big) \oplus {\bf 120}\\
\ell=3: \quad & \Big({\bf 128}_s \oplus {\bf 13312}_s \oplus {\bf 161280}_s \oplus {\bf 183040}_s\Big) \nn\\
&\quad  \oplus\Big( {\bf 128}_s \oplus {\bf 1920}_c \oplus {\bf 13312}_s\Big) \oplus {\bf 128}_s
\end{align}
\end{subequations}
Here, we have labelled the $\mf{so}(16)$ representations by their real dimensions. A translation to highest weight labels can be found in appendix~\ref{app:so16reps}.

The module $\tV$ as in (\ref{eq:induced_V}) built from an $\khor\cong\mf{so}(16)$ representation $\tV_0$ is graded as
\begin{align*}
\tV = \bigoplus_{\ell=0}^\infty \tV_\ell\,,
\hspace{20mm} \tV_\ell \coloneqq \mathcal{U}_\ell \otimes \tV_0\,
\end{align*}
and each level decomposes further as a $\mf{so}(16)$-representation. For instance, in the case of the $\tV_0={\bf 16}$ we obtain
\begin{subequations}
\label{eq:V16}
\begin{align}
\tV_0 &= {\bf 16}\,,\\
\tV_1 &= {\bf 128}_c \oplus {\bf 1920}_s \,,\\
\label{eq:16V2}
\tV_2 &= \Big( {\bf 16} \oplus  {\bf 560} \oplus {\bf 4368} \oplus {\bf 11440} \oplus {\bf 24192} \oplus{\bf 91520}_+\Big)\oplus \Big( {\bf 16}  \oplus {\bf 560} \oplus {\bf 1344}\Big)\,,\\
\tV_3 &= \Big( {\bf 128}_c \oplus 2{\times} {\bf 1920}_s \oplus {\bf 13312}_c \oplus 2{\times} {\bf 56320}_s \oplus {\bf 141440}_s \oplus {\bf 161280}_c \nn\\
&\hspace{10mm}\oplus {\bf 326144}_s \oplus {\bf 439296}_c \oplus {\bf 2036736}_s \oplus {\bf 2489344}_s\Big)\nn\\
&\quad \oplus
\Big( 2{\times}{\bf 128}_c \oplus 3{\times}{\bf 1920}_s \oplus {\bf 13312}_c \oplus {\bf 15360}_c \oplus {\bf 56320}_s \oplus {\bf 141440}_s\Big)\nn\\
&\quad \oplus \Big({\bf 128}_c \oplus {\bf 1920}_s\Big)\,.
\end{align}
\end{subequations}

\subsubsection{A quotient example}

As is clear from~\eqref{eq:V16}, the module grows very rapidly and it is desirable to find smaller $\mf{k}(\mf{e}_9$) representations by identifying invariant subspaces and taking quotients.

The simplest quotient example is to consider $\tW_{s=1/2}=\tV^{(1)}$ in the notation~\eqref{eq:quotN} to be given by all spaces of degree greater than zero, then we obtain as $\mf{k}(\mf{e}_9)$ representation simply $\tV / \tW_{s=1/2}\cong \tV_0 \cong {\bf 16}$ which is nothing but the irreducible spin-1/2 representation appearing in supergravity~\cite{Nicolai:2004nv}.

A non-trivial example can be obtained by looking at the construction~\eqref{quot2} that uses the invariant subspaces $\tW$ generated by an $\mf{so}(16)$ representation $\tW_k$ sitting at a given level. As the example we take $\tW_1={\bf 1920}_s$ within~\eqref{eq:V16}.  In order to describe this, we shall need a more explicit parametrisation of the elements of the module's homogeneous parts $\tV_{\ell}$ for $0\leq \ell\leq 2$. At level $\ell=0$ we need an element of the  $16$-dimensional defining representation of $\mf{so}(16)$ that we write as $\varphi_0^I$.

The elements of $\tV_1\cong{\bf 128}_s\otimes {\bf 16}$ are of the form $\tS_1^A \varphi_0^I$ and decompose into a conjugate spinor ${\bf 128}_c$ and a traceless vector-spinor ${\bf 1920}_s$
\begin{align*}
\tS_1^A \varphi^I = \Gamma^{I}_{A\dot{A}} \chi_1^{\dot{A}} + \chi^{IA}_1
\end{align*}
according to $\tV_1 = {\bf 128}_c \oplus {\bf 1920}_s$. Note that the occurrence of ${\bf 128}_c$ in the above tensor product is tied to the existence of a suitable $\Gamma$-matrix $\Gamma^{I}_{A\dot{A}}$. The condition that projects onto the ${\bf 1920}_s$  is
\begin{align*}
 \chi^{IA}_1 = \tS_1^A \varphi_0^I - \frac1{16} (\Gamma^I\Gamma^J)_{AB} \tS_1^B \varphi_0^J
\end{align*}
and the quotient we want to consider is the one where this component and all vectors generated from it are set to zero. In other words, all elements of the quotient module $\tV/\tW$ satisfy
\begin{align}
\label{1920rel}
\tS_1^B \varphi_0^I = \frac1{16} (\Gamma^I\Gamma^J)_{BC} \tS_1^C \varphi_0^J
\end{align}
and any relations obtained from it by acting with $\mf{k}(\mf{e}_9)$.

To find what this imposes on $\tV_2$ as given in~\eqref{eq:16V2} we parametrise all its elements as
\begin{align}
\label{lev2}
\tS_1^A \tS_1^B \varphi_0^I = \delta^{AB} \varphi^I_2 + \Gamma^{J_1J_2}_{AB} \varphi^{J_1J_2;I}_2 + \Gamma^{J_1\ldots J_4}_{AB} \varphi^{J_1\ldots J_4;I}_2 +\Gamma^{J_1\ldots J_8}_{AB} \varphi^{J_1\ldots J_8;I}_2 \,,
\end{align}
This formula follows from the $\mf{so}(16)$ tensor product ${\bf 128}_s\otimes {\bf 128}_s$ that is relevant for the word $\tS_1^A \tS_1^B $ and that we decompose into its symmetric and anti-symmetric parts
\begin{align*}
\mathrm{Sym}^2 \left({\bf 128}_s\right) &= {\bf 1} \oplus {\bf 1820} \oplus {\bf 6435}_+\,,\nn\\
\mathrm{Alt}^2 \left({\bf 128}_s\right) &= {\bf 120} \oplus {\bf 8008}\,.
\end{align*}
The first line consists of a scalar, a four-form and a self-dual eight-form of $\mf{so}(16)$, while the second line represents a two-form and a six-form. These intertwiners from ${\bf 128}_s\otimes {\bf 128}_s$ to $p$-forms are given by the $\Gamma$-matrices $\Gamma^{I_1\ldots I_p}_{AB}$. As the anti-symmetric product $\tS_1^{[A} \tS_1^{B]}$ is proportional to the commutator~\eqref{ke9rels2c} that does not contain a six-form, the ansatz~\eqref{lev2} does not contain a term in $\Gamma^{I_1\ldots I_6}_{AB}$. The final $\mf{so}(16)$ tensor product on the left-hand side of~\eqref{lev2} is then to multiply ${\bf 128}_s\otimes {\bf 128}_s$ (written as $p$-forms) with ${\bf 16}$ which is the representation of $\varphi_0^I$. These tensor products are written using a semi-colon, so that for instance

\begin{align*}
\varphi_2^{J_1J_2; I} \in {\bf 120} \otimes {\bf 16} = {\bf 16} \oplus {\bf 560} \oplus {\bf 1344}\,.
\end{align*}

Acting on~\eqref{1920rel} with $\tS_1^A$ then leads to the relation
\begin{align}
&\quad \delta_{AB} \varphi^I_2 + \Gamma^{J_1J_2}_{AB} \varphi^{J_1J_2;I}_2 + \Gamma^{J_1\ldots J_4}_{AB} \varphi^{J_1\ldots J_4;I}_2 +\Gamma^{J_1\ldots J_8}_{AB} \varphi^{J_1\ldots J_8;I}_2\nn\\
&\overset{!}{=}\frac1{16} (\Gamma^I\Gamma^K)_{BC}\bigg(\delta^{AC} \varphi^K_2 - \Gamma^{J_1J_2}_{CA} \varphi^{J_1J_2;K}_2 + \Gamma^{J_1\ldots J_4}_{CA} \varphi^{J_1\ldots J_4;K}_2 +\Gamma^{J_1\ldots J_8}_{CA} \varphi^{J_1\ldots J_8;K}_2\bigg)\nn\\
&= \frac1{16} \delta_{AB} \varphi_2^I - \frac{1}{16} \Gamma^{IJ}_{AB} \varphi_2^J + \frac{1}{16} \Gamma^{J_1J_2}_{AB} \varphi_2^{J_1J_2;I} - \frac1{16} \Gamma^{IJ_1J_2J_3}_{AB} \varphi_2^{J_1J_2;J_3}+\frac18 \Gamma^{J_1J_2}_{AB} \varphi_2^{IJ_1;J_2} - \frac18 \Gamma^{IJ}_{AB} \varphi_2^{JK;K} \nn\\
&\quad  + \frac18 \delta_{AB} \varphi_2^{IJ;J} + \frac1{16} \Gamma^{J_1\ldots J_4}_{AB} \varphi^{J_1\ldots J_4; I} - \frac1{16} \Gamma^{IJ_1\ldots J_5}_{AB} \varphi_2^{J_1\ldots J_4;J_5} - \frac14 \Gamma^{IJ_1\ldots J_3}_{AB} \varphi^{J_1\ldots J_3K;K}_2\nn\\
&\quad + \frac14 \Gamma^{J_1\ldots J_4}_{AB} \varphi^{I J_1\ldots J_3;J_4}_2 + \frac{3}{4} \Gamma^{J_1J_2}_{AB} \varphi^{IJ_1J_2K;K}_2 + \frac1{16} \Gamma^{J_1\ldots J_8}_{AB} \varphi^{J_1\ldots J_8;I}_2 - \frac1{16}\Gamma^{IJ_1\ldots J_9}_{AB} \varphi_2^{J_1\ldots J_8;J_9} \nn\\
&\quad -\frac12\Gamma^{IJ_1\ldots J_7}_{AB} \varphi_2^{J_1\ldots J_7K;K} -\frac12 \Gamma^{J_1\ldots J_8}_{AB} \varphi_2^{I J_1\ldots J_7;J_8} + \frac72 \Gamma^{J_1\ldots J_6}_{AB} \varphi_2^{IJ_1\ldots J_6K;K}\,.
\end{align}
Projecting this onto the various irreducible pieces in~\eqref{eq:16V2} leads to the conditions 
\begin{align}
\varphi_2^{IJ;J} &= \frac{15}{2} \varphi_2^I &&\text{(relation between the two ${\bf 16}$)}\nn\\
\varphi_2^{I_1I_2I_3J;J} &= \frac{13}{12} \varphi_2^{[I_1I_2;I_3]} &&\text{(relation between the two ${\bf 560}$)}
\end{align}
and the fact that all other irreducible components must vanish. Therefore, at level $\ell=2$, the quotient is given by only
\begin{align}
\label{eq:quotlev2}
\tV_2/\tW_2 \cong {\bf 16} \oplus {\bf 560}\,,
\end{align}
a comparatively small subspace of~\eqref{eq:16V2}. Already at the next level the above computation becomes almost unfeasible. If one formally substracts the $\mf{so}(16)$-decompositions of $\mathcal{U}_{\ell+1}\otimes\tV_0$ and $\mathcal{U}_{\ell}\otimes\tW_1$ the result indicates that only ${\bf 128}_c$ survives at level three, and that there are only two ${\bf 16}$s at 
level four, after which the procedure terminates. In summary, the above computation shows
\begin{align}
\tV_0/\tW_0 \cong \tV_0 &\cong {\bf 16}\,,\quad\nn\\
\tV_1/\tW_1 &\cong {\bf 128}_c\,,\quad \nn\\
\tV_2/\tW_2 &\cong {\bf 16}\oplus {\bf 560}\,,\quad 
\end{align}
and we conjecture
\begin{align}
\tV_3/\tW_3 &\cong {\bf 128}_c \,,\quad \nn\\
\tV_4/\tW_4 &\cong 2\times {\bf 16}\,,\quad \nn\\
\tV_\ell/\tW_\ell &\cong 0\ \forall\,\ell\geq5\,.
\end{align}
This is related to the analogue of the spin-$\frac52$ representation studied in~\cite{Kleinschmidt:2016ivr}. The spin-3/2 representation of supergravity~\cite{Nicolai:2004nv,Kleinschmidt:2006dy} can also be obtained from this construction by taking a further quotient. More precisely, one quotients by all $\tV_\ell$ with $\ell>2$ and also by the ${\bf 560}$ representation in~\eqref{eq:quotlev2}. The remaining $\mf{so}(16)$ representations are ${\bf 16}\oplus{\bf 128}\oplus {\bf 16}$ that form one chiral half of the supergravity spin-3/2 fields.

\begin{rem}
As is evident from the analysis above, determining the quotient $\tV/\tW$ can become intricate quickly since the precise structure of the submodule $\tW$ is hard to analyse. In the case of complex simple Lie algebras a similar problem arises when constructing irreducible highest weight representations as quotients of Verma modules by the maximal proper submodule. In that case, there is a description of the quotient in terms of the Weyl character formula. A similar technique for representations of $\mf{k}$ is not known to the best of our knowledge.
\end{rem}

\newpage
\appendix


\section{\texorpdfstring{$\mf{k}\left(\mf{e}_9\right)\left(\mathbb{C}\right)$ as the quotient of a GIM-algebra}{k(e9) as the quotient of a GIM-algebra}}
\label{app:gim}

Generalised intersection algebras, GIM-algebras in short for Generalised
Intersection Matrix, are constructed similarly to Kac--Moody algebras.
One starts from a so-called \textit{generalised intersection matrix}
$A$ where one replaces the condition $A_{ij}\leq0$ for $i\neq j$
by $A_{ij}\leq0\Leftrightarrow A_{ji}\leq0$ and $A_{ij}>0\Leftrightarrow A_{ji}>0$.
GIMs can be visualised quite neatly by drawing solid lines for $A_{ij}<0$
and dotted edges for $A_{ij}>0$. In \cite{Berman:1989}, Berman explores
a connection between his involutory subalgebras and GIM-algebras.
As it turns out due to the work of Slodowy, GIM-algebras fall into
two classes. The first class consists of those GIM-algebras which
are in fact isomorphic to a Kac--Moody-algebra and the second class
are those which are isomorphic to an involutory subalgebra of a Kac--Moody-algebra
according to Berman's construction. This isomorphism relies on doubling
the number of vertices of the respective GIM-diagram and the involution
that is used involves a diagram automorphism of the new diagram which
is of Kac--Moody-type. So even though our $\mathfrak{k}$ is an involutory
subalgebra it is not of the type that is directly isomorphic to a
GIM-algebra. For $\mathfrak{k}\left(E_{9}\right)\left(\mathbb{C}\right)$
it turns out that it is a quotient of a GIM-algebra, although we do not know the precise structure of the defining ideal. In particular we do not know whether it is generated by (\ref{eq:non-defining relation 1})--(\ref{eq:non-defining relation 5}) or whether one needs additional relations. We will collect
the most essential definitions and results here (see the work of Slodowy
\cite{Slodowy} for more details) and state our result in proposition \ref{prop:gim-quotient}.
\begin{defn}
Let $I$ be a finite index set and $\mathfrak{h}$ a $\mathbb{C}$-vector
space of dimension $r$. Let $\Delta^{\vee}\coloneqq\left\{ h_{i}\ \vert\ i\in I\right\} \subset\mathfrak{h}$
and $\Delta\coloneqq\left\{ \alpha_{i}\ \vert\ i\in I\right\} \subset\mathfrak{h}^{\ast}$.
Then $\left(\mathfrak{h},\Delta^{\vee},\Delta\right)$ is called a
$\mathbb{C}$-root basis, its reductive rank is defined to be equal
to $r$, whereas its semi-simple rank is defined to be $\vert I\vert$.
Associate a matrix $A$ to $\left(\mathfrak{h},\Delta^{\vee},\Delta\right)$,
called its structure matrix, by setting 
\begin{equation}
A_{ij}\coloneqq\alpha_{j}\left(h_{i}\right)\ \forall\,i,j\in I.\label{eq:structure matrix}
\end{equation}
A root basis is called free if both $\Delta^{\vee}$ and $\Delta$
are linearly independent.
\end{defn}
\begin{defn}
Let $A\in\mathbb{Z}^{\ell\times \ell}$ such that 
\begin{eqnarray*}
(i)\ \,\quad A_{ii} & = & 2\ \forall\,i=1,\dots,\ell\\
(ii)\quad A_{ij} & < & 0\ \Leftrightarrow\ A_{ji}<0\ \forall\,i\neq j\\
(iii)\quad A_{ij} & > & 0\ \Leftrightarrow\ A_{ji}>0\ \forall\,i\neq j,
\end{eqnarray*}
then $A$ is called a generalised intersection matrix (GIM). A GIM
$A$ is called symmetrisable if there exist $D,B\in\mathbb{Q}^{\ell\times \ell}$
such that $D$ is diagonal and $B$ is symmetric and it holds $A=DB$.
A root basis $\left(\mathfrak{h},\Delta^{\vee},\Delta\right)$ whose
structure matrix is a generalised intersection matrix is called a
GIM-root basis.
\end{defn}
\begin{defn}
Let $\left(\mathfrak{h},\Delta^{\vee},\Delta\right)$ be a GIM-root
basis with structure matrix $A$. Then $\underline{\Delta}(A)$ is
a coloured, weighted graph with vertices $\Delta$ with:
\begin{enumerate}
\item Two vertices $i$ and $j$ are connected by a dotted edge if $A_{ij}=\alpha_{j}\left(h_{i}\right)>0$.
\item Two vertices $i$ and $j$ are connected by a solid edge if $A_{ij}=\alpha_{j}\left(h_{i}\right)<0$.
\item There is no edge between two vertices $i$ and $j$ if $A_{ij}=0$.
\item The edges $(i,j$) are weighted by the weight $m_{ij}$ according
to the following table 
\begin{center}\begin{tabular}{|c|c|c|c|c|c|}
\hline 
$\alpha_{i}\left(h_{j}\right)\cdot\alpha_{j}\left(h_{i}\right)$ & $0$ & $1$ & $2$ & $3$ & $\geq4$\tabularnewline
\hline 
\hline 
$m_{ij}$ & $2$ & $3$ & $4$ & $6$ & $\infty$\tabularnewline
\hline 
\end{tabular}\end{center}
\end{enumerate}
The weights $m_{ij}=2,3$ are not set apart graphically in the
diagram's visualisation. In the case of $m_{ij}=4,6$ one draws an
arrow towards the node $j$ if $\left|A_{ji}\right|=\left|\alpha_{i}\left(h_{j}\right)\right|>\left|\alpha_{j}\left(h_{i}\right)\right|=\left|A_{ij}\right|$.
\end{defn}
\begin{defn}
\label{def:GIM-algebra}Let $\left(\mathfrak{h},\Delta^{\vee},\Delta\right)$
be a GIM-root basis with structure matrix $A$ and let $\mathfrak{f}$
be the free Lie algebra over $\mathbb{C}$ generated by $\mathfrak{h}$
and elements $e_{\alpha},e_{-\alpha}$ for $\alpha\in\Delta$. Let
$\mathfrak{I}$ be the ideal in $\mathfrak{f}$ generated by the relations
(identify $h_{-\alpha}\equiv-h_{\alpha}$ for $\alpha\in-\Delta$)
\begin{equation*}
\left[h,h'\right] =  0,\quad \left[h,e_{\alpha}\right]  =  \alpha(h)e_{\alpha}\qquad \forall\,h,h'\in\mathfrak{h},\alpha\in\pm\Delta,\\
\end{equation*}
\begin{equation*}
\left[e_{\alpha},e_{-\alpha}\right]  =  h_{\alpha},\quad 
\text{ad}\left(e_{\alpha}\right)^{\text{max}\left(1,1-\beta\left(h_{\alpha}\right)\right)}\left(e_{\beta}\right)=0\quad
\forall\,\alpha,\beta\in\pm\Delta \text{ and $\alpha\neq -\beta$}.
\end{equation*}

Set $\mathfrak{g}\coloneqq\mathfrak{f}\diagup\mathfrak{I}$ then $\mathfrak{g}$
is called the GIM-Lie algebra to $\left(\mathfrak{h},\Delta^{\vee},\Delta\right)$.
Note that every GIM $A$ has a free realisation $\left(\mathfrak{h},\Delta^{\vee},\Delta\right)$
that is unique up to isomorphism. In this sense one can associate
a GIM-Lie algebra $\mathfrak{gim}(A)$ to a structure matrix $A$
or equivalently a GIM-diagram $\underline{\Delta}(A)$.
\end{defn}

If one spells out the above relations for $A_{ij}<0$, one obtains
with $e_{i}=e_{\alpha_{i}}$, $f_{i}=e_{-\alpha_{i}}$ the familiar
Serre-relations
\[
\text{ad}\left(e_{i}\right)^{1-A_{ij}}\left(e_{j}\right)=0=\text{ad}\left(f_{i}\right)^{1-A_{ij}}\left(f_{j}\right),\quad\left[e_{i},f{}_{j}\right]=0=\left[e_{j},f_{i}\right]
\]
but for $A_{ij}>0$ one obtains 
\[
\text{ad}\left(e_{i}\right)^{1+A_{ij}}\left(f_{j}\right)=0=\text{ad}\left(f_{i}\right)^{1+A_{ij}}\left(e_{j}\right),\quad\left[e_{i},e{}_{j}\right]=0=\left[f_{i},f_{j}\right].
\]
One knows on abstract grounds that the complexification $\mathfrak{k}\left(A_{8}\right)\left(\mathbb{C}\right)$
of the canonical subalgebra $\mathfrak{k}\left(A_{8}\right)<\mathfrak{k}\left(E_{9}\right)$
is isomorphic to $B_{4}\left(\mathbb{C}\right)\cong\mathfrak{so}\left(9,\mathbb{C}\right)$.
Let us spell out the relationship between the description of $\mathfrak{k}\left(A_{8}\right)\left(\mathbb{C}\right)$
via Berman generators
\begin{align}
x_i = e_i - f_i
\end{align}
 and the usual description of $\mathfrak{g}\left(B_{4}\right)\left(\mathbb{C}\right)$
in terms of a Chevalley--Serre basis. Let $i_{1},i_{2},\dots,i_{k}\in\{1,\dots,9\}$
and set 
\begin{equation}
x_{\alpha_{i_{1}}+\dots+\alpha_{i_{k}}}\coloneqq\left[x_{i_{1}},\left[x_{i_{2}},\left[\dots,\left[x_{i_{k-1}},x_{i_{k}}\right]\right]\right]\right].\label{eq:nested ordered Bermans}
\end{equation}
Note that the order in the sum $\alpha_{i_{1}}+\dots+\alpha_{i_{k}}$
matters.  For $i<j$ define roots $\beta_{i,j}^{(1)},\dots,\beta_{i,j}^{(4)}\in\Delta\left(A_{8}\right)\subset\Delta\left(E_{9}\right)$
by 
\begin{equation}
\beta_{i,j}^{(1)}=\alpha_{2i}+\dots+\alpha_{2j-1}\ ,\ \beta_{i,j}^{(2)}=\alpha_{2i}+\dots+\alpha_{2j-2}\label{eq:def of =00005Cbeta_i,j part 1}
\end{equation}
\begin{equation}
\beta_{i,j}^{(3)}=\alpha_{2i-1}+\dots+\alpha_{2j-1}\ ,\ \beta_{i,j}^{(4)}=\alpha_{2i-1}+\dots+\alpha_{2j-2}.\label{eq:def of =00005Cbeta_i,j part 2}
\end{equation}
Now set 
\begin{equation}
e_{\varepsilon_{1}L_{i}+\varepsilon_{2}L_{j}}\coloneqq\frac{i}{2}\cdot\left(x_{\beta_{i,j}^{(1)}}-i\varepsilon_{2}x_{\beta_{i,j}^{(2)}}-i\varepsilon_{1}x_{\beta_{i,j}^{(3)}}-\varepsilon_{1}\varepsilon_{2}x_{\beta_{i,j}^{(4)}}\right)\  \forall\,i<j\in\{1,2,3,4\},\label{eq:root operators for so9}
\end{equation}
\begin{equation}
H_{j}\coloneqq-ix_{2j-1},\ \text{for }j=1,\dots,4\,,\label{eq:orthonormal Cartan}
\end{equation}
\begin{equation}
e_{\pm L_{j}}\coloneqq i\cdot\left(x_{\alpha_{2j}+\dots+\alpha_{8}}\mp ix_{\alpha_{2j-1}+\dots+\alpha_{8}}\right).\label{eq:small root operators for so9}
\end{equation}

\begin{prop}
Consider the abelian subalgebra $\mathfrak{h}_{B_{4}}\coloneqq\text{span}_{\mathbb{C}}\left\{ H_{1},\dots,H_{4}\right\} $
together with the linear functionals $L_{i}:\mathfrak{h}_{B_{4}}^{\ast}\rightarrow\mathbb{C}$
defined via $L_{i}\left(H_{j}\right)=\delta_{ij}$. Then with the
above definitions (\ref{eq:root operators for so9}), (\ref{eq:orthonormal Cartan})
and (\ref{eq:small root operators for so9}) one has
\[
\left[h,e_{\varepsilon_{1}L_{i}+\varepsilon_{2}L_{j}}\right]=\left(\varepsilon_{1}L_{i}+\varepsilon_{2}L_{j}\right)\left(h\right)e_{\varepsilon_{1}L_{i}+\varepsilon_{2}L_{j}},\ \left[h,e_{\pm L_{j}}\right]=\pm L_{j}\left(h\right)e_{\pm L_{j}}\  \forall\,h\in\mathfrak{h}_{B_{4}}
\]
Thus, (\ref{eq:root operators for so9}) and (\ref{eq:small root operators for so9})
provide a root space decomposition of $\mathfrak{k}\left(A_{8}\right)\left(\mathbb{C}\right)\cong B_{4}\left(\mathbb{C}\right)$
with respect to the Cartan subalgebra $\mathfrak{h}_{B_{4}}$ spanned by (\ref{eq:orthonormal Cartan}).
A corresponding Chevalley--Serre basis is given by 
\[
e_{i}\coloneqq e_{L_{i}-L_{i+1}},\ f_{i}\coloneqq e_{-L_{i}+L_{i+1}},\ h_{i}\coloneqq H_{i}-H_{i+1}\ \forall\,i=1,2,3,
\]
\[
e_{4}=e_{+L_{4}},\ f_{4}\coloneqq e_{-L_{4}},\ h_{4}\coloneqq 2H_{4}.
\]
\end{prop}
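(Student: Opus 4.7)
The plan is to prove the proposition by direct computation, exploiting the Berman relations of $A_8$ and a single structural fact: for adjacent nodes $i,i{+}1$ of the $A_8$ diagram, the Berman relation $[x_i,[x_i,x_{i+1}]]=-x_{i+1}$ implies that $\mathrm{ad}(x_{2k-1})^2$ acts as $-\mathrm{Id}$ on a suitable two-dimensional subspace, so that $\mathrm{ad}(H_k)=-i\,\mathrm{ad}(x_{2k-1})$ has real eigenvalues $\pm 1$ there. The diagonalisation of $\mathrm{ad}(H_k)$ is therefore dictated by two-by-two blocks whose eigenvectors are precisely the complex combinations displayed in~\eqref{eq:root operators for so9} and~\eqref{eq:small root operators for so9}.

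First I would check that $\mf{h}_{B_4}$ is abelian: the nodes $1,3,5,7$ of the $A_8$ Dynkin diagram are pairwise non-adjacent, so the Berman relations give $[x_{2i-1},x_{2j-1}]=0$ for $i\neq j$, hence $[H_i,H_j]=0$. Defining $L_i\in\mf{h}_{B_4}^{\ast}$ by $L_i(H_j)=\delta_{ij}$ then makes sense.

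The main step is to verify the joint eigenvalue statement $[H_k,e_{\varepsilon_1 L_i+\varepsilon_2 L_j}]=(\varepsilon_1\delta_{ki}+\varepsilon_2\delta_{kj})\,e_{\varepsilon_1 L_i+\varepsilon_2 L_j}$ for all $k\in\{1,\ldots,4\}$. I would split this into three cases according to the position of the index $2k-1$ relative to the index set of $\beta^{(\ell)}_{i,j}$. For $k\notin\{i,i{+}1,\ldots,j\}$ (outside case) non-adjacency in $A_8$ gives $[x_{2k-1},x_{\beta^{(\ell)}_{i,j}}]=0$ on each $\ell$. For $k\in\{i,j\}$ (boundary case) the Berman relations yield explicit four-by-four matrices for $\mathrm{ad}(x_{2k-1})$ on $\mathrm{span}\{x_{\beta^{(1)}_{i,j}},\ldots,x_{\beta^{(4)}_{i,j)}}\}$; the combinations in~\eqref{eq:root operators for so9} are then read off as the joint eigenvectors. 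For $i<k<j$ (interior case) the bracket on individual $x_{\beta^{(\ell)}_{i,j}}$ is non-zero, and one must show the contributions cancel in the prescribed linear combination; this follows by expanding $x_{\beta^{(\ell)}_{i,j}}$ as iterated brackets and using that $x_{2k-1}$ only interacts with its two diagram neighbours $x_{2k-2},x_{2k}$, yielding a telescoping cancellation. The analysis for $e_{\pm L_j}$ in~\eqref{eq:small root operators for so9} is the analogous two-term version, using the nested commutators $x_{\alpha_{2j-1}+\cdots+\alpha_8}$ and $x_{\alpha_{2j}+\cdots+\alpha_8}$ and reflecting the fact that $L_j$ is a short root of $B_4$ with no $L_k$-partner.

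Finally, the Chevalley basis claim follows once the root space decomposition is established: the assignment $e_i\coloneqq e_{L_i-L_{i+1}}$ etc.\ produces simple root vectors whose coroots must equal $h_i=H_i-H_{i+1}$ (resp.\ $h_4=2H_4$) by a finite number of explicit bracket computations $[e_i,f_i]$, and the Serre relations are inherited from those of $\mf{k}(A_8)(\mathbb{C})$ together with the abstract identification with $B_4(\mathbb{C})$.

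The principal obstacle is the interior-case calculation in the second step: verifying that the specific complex coefficients $(1,-i\varepsilon_2,-i\varepsilon_1,-\varepsilon_1\varepsilon_2)$ are exactly what is needed to make $[H_k,e_{\varepsilon_1 L_i+\varepsilon_2 L_j}]$ vanish for $i<k<j$ is a delicate coefficient match. The cleanest route is to organise the $x_{\beta^{(\ell)}_{i,j}}$ into the tensor product of two-dimensional $\mathrm{ad}(H_i)$- and $\mathrm{ad}(H_j)$-eigenspaces and argue that the interior $H_k$ acts trivially on the whole product by induction on $|j-i|$, reducing the problem to the base case $j=i+1$ where no interior indices occur.
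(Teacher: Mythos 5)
Your proposal is correct and follows essentially the same route as the paper's proof, which simply verifies by direct computation that the elements $e_{\varepsilon_{1}L_{i}+\varepsilon_{2}L_{j}}$ and $e_{\pm L_{j}}$ are $\mathrm{ad}(\mathfrak{h}_{B_{4}})$-eigenvectors with the stated eigenvalues and then invokes the abstractly known isomorphism $\mathfrak{k}(A_{8})(\mathbb{C})\cong B_{4}(\mathbb{C})$ to conclude that these eigenvectors exhaust the algebra; your three-case organisation of that computation and the reduction of the interior case via the Berman relation $\mathrm{ad}(x_{2k-1})^{2}=-\mathrm{Id}$ on two-dimensional blocks is a legitimate way to carry it out. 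Just make sure to state explicitly, as the paper does, that it is the known isomorphism type (hence the known dimension and root system of $B_{4}$) that upgrades the eigenvector computation to a full root space decomposition.
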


\begin{proof}
One verifies that (\ref{eq:root operators for so9}) and (\ref{eq:small root operators for so9})
are eigenvectors unde the adjoint action of $\mathfrak{h}_{B_{4}}$
with the correct eigenvalues by direct computation. This suffices
for a root space decomposition because one knows abstractly that this
exhausts $\mathfrak{k}\left(A_{8}\right)\left(\mathbb{C}\right)$
as its isomorphism type is known. Checking the Chevalley--Serre basis is
a matter of fixing suitable prefactors.
\end{proof}
Now set 
\begin{equation}
x_{\pm}\coloneqq i\left(x_{9}\mp i\left[x_{3},x_{9}\right]\right)\label{eq:def of X=00005Cpm}
\end{equation}
then one has 
\begin{equation}
\left[x_{+},x_{-}\right]=2H_{2},\ \left[H_{2},x_{\pm}\right]=\pm x_{\pm},\ \left[H_{i},x_{\pm}\right]=0\ \forall\,i\neq2,\label{eq:X-relations 1}
\end{equation}
and the Slodowy-type relations
\begin{align}
\left[x_{+},y\right]=0 & \quad \forall\,y\in\left\{ f_{1},e_{2}\right\} \cup\left\{ e_{3},e_{4},f_{3},f_{4}\right\}, \label{eq:X-relations 2}\\
\ad\left(x_{+}\right)^{3}\left(y\right)=0=\text{ad}\left(y\right)^{2}\left(x_{+}\right) & \quad \forall\,y\in\left\{ e_{1},f_{2}\right\} \cup\left\{ e_{3},e_{4},f_{3},f_{4}\right\}, \label{eq:X-relations 3}\\
\left[x_{-},y\right]=0 & \quad \forall\,y\in\left\{ e_{1},f_{2}\right\} \cup\left\{ e_{3},e_{4},f_{3},f_{4}\right\}, \label{eq:X-relations 4}\\
\ad\left(x_{-}\right)^{3}\left(y\right)=0=\text{ad}\left(y\right)^{2}\left(x_{-}\right) & \quad \forall\,y\in\left\{ f_{1},e_{2}\right\} \cup\left\{ e_{3},e_{4},f_{3},f_{4}\right\}, \label{eq:X-relations 5}
\end{align}
as well as additional relations that hold in $\mf{k}\left(\mf{e}_9\right)(\mathbb{C})$
\begin{equation}
\left[x_{+},e_{\varepsilon L_{1}-L_{2}}\right]=x_{\alpha_{2}+\alpha_{3}+\alpha_{9}}-i\varepsilon x_{\alpha_{1}+\alpha_{2}+\alpha_{3}+\alpha_{9}}=\left[x_{-},e_{\varepsilon L_{1}+L_{2}}\right]\label{eq:non-defining relation 1}
\end{equation}
\begin{equation}
\text{ad}\left(x_{+}\right)^{2}\left(e_{\varepsilon L_{1}-L_{2}}\right)=2e_{\varepsilon L_{1}+L_{2}},\ \text{ad}\left(x_{-}\right)^{2}\left(e_{\varepsilon L_{1}+L_{2}}\right)=2e_{\varepsilon L_{1}-L_{2}}\label{eq:non-defining relation 2}
\end{equation}
\begin{equation}
\left[x_{+},e_{+L_{2}+\varepsilon L_{3}}\right]=0=\left[x_{-},e_{-L_{2}+\varepsilon L_{3}}\right] \label{eq:non-defining relation 3}
\end{equation}
\begin{equation}
\left[x_{+},e_{-L_{2}+\varepsilon L_{3}}\right]=-\varepsilon x_{\alpha_{9}+\alpha_{3}+\alpha_{4}}-ix_{\alpha_{9}+\alpha_{3}+\alpha_{4}+\alpha_{5}}=-\left[x_{-},e_{L_{2}+\varepsilon L_{3}}\right] \label{eq:non-defining relation 4}
\end{equation}
\begin{equation}
\text{ad}\left(x_{+}\right)^{2}\left(e_{-L_{2}+\varepsilon L_{3}}\right)=-2e_{L_{2}+\varepsilon L_{3}},\ \text{ad}\left(x_{-}\right)^{2}\left(e_{L_{2}+\varepsilon L_{3}}\right)=-2e_{-L_{2}+\varepsilon L_{3}}. \label{eq:non-defining relation 5}
\end{equation}
Consider the Cartan matrix of $B_{4}$  and a GIM which we call $B_{4}^{\diamond}$ that extends it: 
\[
B_{4}=\begin{pmatrix}2 & -1 & 0 & 0\\
-1 & 2 & -1 & 0\\
0 & -1 & 2 & -1\\
0 & 0 & -2 & 2
\end{pmatrix},
\quad B_{4}^{\diamond}=\begin{pmatrix}2 & -2 & 2&0&0\\
-1 & 2 & -1 & 0 &0\\
1 & -1 & 2 & -1 & 0\\
0 & 0 & -1 & 2 & -1 \\
0 & 0 & 0 & -2 & 2
\end{pmatrix}
\]
The nontrivial Serre relations (for the $e_{i}$ only) of $B_4$ spell out
to be 
\[
\text{ad}\left(e_{i}\right)^{2}\left(e_{i+1}\right)=0=\text{ad}\left(e_{i+1}\right)^{2}\left(e_{i}\right)\ \forall\,i=1,2
\]
\[
\text{ad}\left(e_{3}\right)^{2}\left(e_{4}\right)=0=\text{ad}\left(e_{4}\right)^{3}\left(e_{3}\right).
\]

Denote the associated GIM-algebra to $B_4^\diamond$ over $\mathbb{C}$ by $\mathfrak{gim}\left(B_{4}^{\diamond}\right)\left(\mathbb{C}\right)$. The diagrams of $B_4$ and $B_4^{\diamond}$ are given in figure~\ref{fig:gim}.

\begin{figure}[t!]
\centering
\begin{picture}(260,50)
\thicklines
\multiput(10,10)(30,0){4}{\circle*{10}}
\put(10,10){\line(1,0){60}}
\put(70,8){\line(1,0){28}}
\put(70,12){\line(1,0){28}}
\put(90,10.2){\line(-1,-1){6}}
\put(90,9.8){\line(-1,1){6}}
\put(7,-5){$1$}
\put(37,-5){$2$}
\put(67,-5){$3$}
\put(97,-5){$4$}
\put(50,30){$B_4$}
\multiput(160,10)(30,0){4}{\circle*{10}}
\put(160,10){\line(1,0){60}}
\put(220,8){\line(1,0){28}}
\put(220,12){\line(1,0){28}}
\put(240,10.2){\line(-1,-1){6}}
\put(240,9.8){\line(-1,1){6}}
\put(157,-5){$1$}
\put(187,-5){$2$}
\put(217,-5){$3$}
\put(247,-5){$4$}
\put(175,39){\circle*{10}}
\put(160,14){\line(1,2){12}}
\put(163,12){\line(1,2){12}}
\put(169.2,27.5){\line(-3,-1){7}}
\put(168.8,27.5){\line(1,-2){2.9}}
\multiput(190,14)(-2.5,5){5}{\line(-1,2){1.5}}
\multiput(187,12)(-2.5,5){5}{\line(-1,2){1.5}}
\put(180.8,27.8){\line(3,-1){7}}
\put(181.2,27.8){\line(-1,-2){2.9}}
\put(160,38){$0$}
\put(205,30){$B_4^{\diamond}$}
\end{picture}
\caption{\label{fig:gim}\sl The diagrams associated to $B_{4}$ and $B_{4}^{\diamond}$ with labelling of nodes.}
\end{figure}
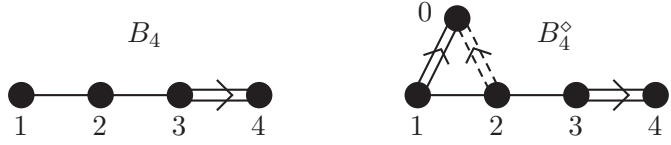

\begin{prop}
\label{prop:gim-quotient}Denote the Chevalley generators of $\mathfrak{gim}\left(B_{4}^{\diamond}\right)\left(\mathbb{C}\right)$
by $E_{0},\dots,E_{4}$, $F_{0},\dots,F_{4}$ and $H_{\gamma_{0}},\dots,H_{\gamma_{4}}$.
There exists a surjective homomorphism of Lie algebras $\phi:\mathfrak{gim}\left(B_{4}^{\diamond}\right)\left(\mathbb{C}\right)\rightarrow\mathfrak{k}\left(E_{9}\right)\left(\mathbb{C}\right)$
that is given on the Level of generators via 
\[
\phi\left(E_{0}\right)=x_{+},\ \phi\left(F_{0}\right)=x_{-},\ \phi\left(H_{\gamma_{0}}\right)=2H_{2}=2h_{2}+2h_{3}+h_{4}
\]
\[
\phi\left(E_{i}\right)=e_{i},\ \phi\left(F_{i}\right)=f_{i},\ \phi\left(H_{\gamma_{i}}\right)=h_{i}\ \forall\,i=1,2,3,4.
\]
\end{prop}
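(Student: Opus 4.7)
The plan is to verify that the prescribed images in $\mathfrak{k}(E_9)(\mathbb{C})$ satisfy the defining relations of $\mathfrak{gim}(B_4^\diamond)(\mathbb{C})$ listed in Definition~\ref{def:GIM-algebra}, so that by the universal property $\phi$ extends uniquely to a Lie algebra homomorphism. The non-defining relations \eqref{eq:non-defining relation 1}--\eqref{eq:non-defining relation 5} play no role in this step; they only witness that $\ker\phi$ is non-trivial. Surjectivity will then reduce to a one-line identity expressing the affine Berman generator as a linear combination of $x_+$ and $x_-$.

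For the verification, I would split the defining relations by the edge type in the GIM diagram $\underline{\Delta}(B_4^\diamond)$. Restricted to the indices $\{1,\ldots,4\}$, the matrix $B_4^\diamond$ is the ordinary Cartan matrix of $B_4$, so these relations are the standard $B_4$ Chevalley--Serre relations; they hold in $\mathfrak{k}(E_9)(\mathbb{C})$ because the preceding proposition realises $e_i, f_i, h_i$ for $i=1,\ldots,4$ as a Chevalley basis of $\mathfrak{k}(A_8)(\mathbb{C}) \cong B_4(\mathbb{C})$. The substantive work is at the new node $0$: the Chevalley relation $[E_0, F_0] = H_{\gamma_0}$ together with the eigenvalue identities for $H_{\gamma_0}$ acting on $E_0, F_0$ are exactly \eqref{eq:X-relations 1}, while the remaining eigenvalue conditions $[\phi(H_{\gamma_0}), \phi(E_j)] = A_{0j}\phi(E_j)$ and $[\phi(H_{\gamma_j}), \phi(E_0)] = A_{j0}\phi(E_0)$ (together with their $F$-analogues) follow by evaluating $2H_2 = 2h_2 + 2h_3 + h_4$ on the $B_4$ root vectors and combining with the $[H_i, x_\pm]$ statements in \eqref{eq:X-relations 1}.

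The Slodowy-type interaction relations between node $0$ and $j \in \{1,\ldots,4\}$ split by the sign of $A_{0j}$: no edge ($j \in \{3,4\}$, $A_{0j}=0$) demands commutation of $E_0, F_0$ with $E_j, F_j$, supplied by the first statements in \eqref{eq:X-relations 2} and \eqref{eq:X-relations 4}; solid edge ($j=1$, $A_{01}=-2$, $A_{10}=-1$) demands $[E_0, F_1]=0$ together with $\ad(E_0)^3(E_1) = 0$, $\ad(E_1)^2(E_0)=0$ and their $F$-analogues; dotted edge ($j=2$, $A_{02}=2$, $A_{20}=1$) demands the anti-Serre identities $[E_0, E_2]=0$, $\ad(E_0)^3(F_2)=0$, $\ad(F_2)^2(E_0)=0$ and their $F$-analogues. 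All of these are delivered by \eqref{eq:X-relations 2}--\eqref{eq:X-relations 5} after matching the index conventions. Surjectivity is then immediate: the image contains the Chevalley basis of $B_4 \cong \mathfrak{k}(A_8)(\mathbb{C})$ and hence the Berman generators $x_1,\ldots,x_8$; from the definition of $x_\pm$ one reads off $x_+ + x_- = 2i\,x_9$, so $x_9 = -\tfrac{i}{2}(x_+ + x_-) \in \operatorname{im}(\phi)$, and all nine Berman generators of $\mathfrak{k}(E_9)(\mathbb{C})$ lie in the image. The only real obstacle, already resolved by the preparatory material in the appendix, is the identification of the pair $(x_+, x_-)$ and a matching $B_4$ Chevalley basis that makes the $B_4^\diamond$ pattern manifest; given \eqref{eq:X-relations 1}--\eqref{eq:X-relations 5}, the proof itself is a direct, if slightly tedious, verification.
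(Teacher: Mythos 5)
Your proposal is correct and follows essentially the same route as the paper: verify the defining GIM relations of definition~\ref{def:GIM-algebra} on the proposed images, with the $B_4$ sub-relations holding by the identification of $\mathfrak{k}(A_8)(\mathbb{C})\cong B_4(\mathbb{C})$, the node-$0$ relations supplied by \eqref{eq:X-relations 1}--\eqref{eq:X-relations 5}, and surjectivity via $x_++x_-=2ix_9$. Your write-up is merely more explicit than the paper's in organising the node-$0$ checks by edge type and sign of $A_{0j}$.
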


\begin{proof}
One verifies that the defining relations between the generators from
definition (\ref{def:GIM-algebra}) are satisfied. The $B_{4}$-relations
are unproblematic and towards the relations 
\[
\text{ad}\left(E_{0}\right)^{3}\left(E_{1}\right)=0=\text{ad}\left(E_{1}\right)^{2}\left(E_{0}\right),\ \text{ad}\left(E_{0}\right)^{3}\left(F_{2}\right)=0=\text{ad}\left(F_{2}\right)^{2}\left(E_{0}\right)
\]
 we refer to equations  (\ref{eq:X-relations 2})--(\ref{eq:X-relations 5}).
 One also has to
check that $H_{\gamma_{0}}\mapsto2H_{2}$ satisfies all necessary
identities which is the case. Thus, $\phi$ is a homomorphism of Lie-algebras.
Surjectivity follows from the fact that all Berman generators of $\mathfrak{k}\left(E_{9}\right)\left(\mathbb{C}\right)$
can be recovered from the image of the generators of $\mathfrak{gim}\left(B_{4}^{\diamond}\right)\left(\mathbb{C}\right)$.
For $x_{1},\dots,x_{8}$ this is a basis transformation within $B_{4}\left(\mathbb{C}\right)$
and for $x_{9}$ one notes that (\ref{eq:def of X=00005Cpm}) implies
\[
x_{+}+x_{-}=2ix_{9}.
\]
\end{proof}
GIM-algebras are graded with respect to their root system although their root system does not split
into positive and negative roots. Now consider an element $\left[E_{0},\left[E_{0},E_{1}\right]\right]-2E_{L_{1}+L_{2}}$
with $E_{L_{1}+L_{2}}\propto\left[E_{1},\left[E_{2},\left[E_{4},\left[E_{4},E_{3}\right]\right]\right]\right]$. This element is nonzero because the two summands lie in different root spaces and are nonzero themselves. But because of (\ref{eq:non-defining relation 2}) it is equal to $0$ in the image. This implies that $\mathfrak{k}\left(E_{9}\right)\left(\mathbb{C}\right)$
is a (nontrivial) quotient of $\mathfrak{gim}\left(B_{4}^{\diamond}\right)\left(\mathbb{C}\right)$. 
 
In conclusion, representations of $\mathfrak{gim}\left(B_{4}^{\diamond}\right)\left(\mathbb{C}\right)$
could potentially be useful to find representations of $\mathfrak{k}\left(E_{9}\right)\left(\mathbb{C}\right)$
if it is possible to check whether or not a given representation factors
through the projection of proposition \ref{prop:gim-quotient}. Conversely,
our results from sections \ref{sec:par} and \ref{sec:verm} provide representations of $\mathfrak{gim}\left(B_{4}^{\diamond}\right)\left(\mathbb{C}\right)$
both of finite and infinite dimension.


\section{\texorpdfstring{The Hilbert space completion $\widehat{\mf{k}}$ is not a Lie algebra}{The Hilbert space completion of k is not a Hilbert Lie algebra}}
\label{app:compl}

Because the restriction of the standard bilinear form is positive definite we can
complete $\mf{k}$ to a Hilbert space $\widehat{\mf{k}}$. Here we show by means 
of a simple explicit example that this Hilbert space completion is 
not compatible with the Lie algebra structure. Define
\begin{equation}
J_N(\omega) \coloneqq  \frac12 \omega^{IJ} \sum_{n=1}^N \frac1{n^{1/2+ \varepsilon}} \cX^{IJ}_n
\end{equation}
The positive definite bilinear form can be normalised such that (for $m,n\geq 0$)
\begin{equation}
\big\langle \cX_m^{IJ}  | \cX_n^{KL}  \big\rangle =  \delta_{mn} \delta^{IJ}_{KL}
\end{equation}
The induced norm is distinguished by its invariance, which implies that the right-hand side
is independent of $m$ and $n$. Consequently,
\begin{equation}
|\!| J_N |\!|^2 \,=\, C_0\sum_{n=1}^N \frac1{n^{1+2\varepsilon}}
\end{equation}
where $C_0 = C_0(\omega) $ is an irrelevant strictly positive constant. 
For $\varepsilon > 0$ this sum converges, 
and therefore the limit $J_\infty(\omega)  \equiv \lim_{N\rightarrow\infty} J_N(\omega)$ belongs 
to the Hilbert space $\widehat{\mf{k}}$. We next compute the commutator 
\begin{equation}\label{J1J2}
\Big[ J_N(\omega_1)\,,\, J_N(\omega_2) \Big] \,=\, 
\frac12 [\omega_1,\omega_2]^{IJ} \sum_{n=1}^N  f_n  \cX_n^{IJ}
\end{equation}
with 
\begin{equation}
f_n \,=\, \sum_{m=1}^{n-1} \frac1{m^{1/2+\varepsilon}} \frac1{(n-m)^{1/2+\varepsilon}}
+ \sum_{m=1}^N  \frac1{m^{1/2+\varepsilon}} \frac1{(n+m)^{1/2+\varepsilon}}
\end{equation}
Estimating the first sum on the right-hand side as ($n>1$)
\begin{equation}
 \sum_{m=1}^{n-1} \frac1{m^{1/2+\varepsilon}} \frac1{(n-m)^{1/2+\varepsilon}} \,>\,
 \frac{n-1}{n^{1 + 2\varepsilon}}
\end{equation}
it is easy to see that
\begin{equation}
f_n \,>\, \frac{C_1}{n^{2\varepsilon}}
\end{equation}
with another irrelevant strictly positive constant $C_1$. For $\varepsilon < \frac14$ the sum
$\sum_{n=1}^\infty |f_n|^2$ diverges, whence (\ref{J1J2}) does not converge in the
limit $N\rightarrow\infty$. In other words, although $J_\infty(\omega_1)$ and
$J_\infty(\omega_2)$ separately do belong to $\widehat{\mf{k}}$, their commutator 
does not exist as an element of $\widehat{\mf{k}}$. Hence $\widehat{\mf{k}}$ is not 
even a Lie algebra, and {\em a fortiori} also not a Hilbert Lie algebra, which would 
in addition require $|\!| [x,y] |\!| < C_2 |\!| x |\!| |\!| y |\!|$ for all $x,y \in \widehat{\mf{k}}$. 

We stress that the failure of $\widehat{\mf{k}}$ to be a Lie algebra depends on the norm used for the completion which in the analysis above was the standard invariant bilinear form. Other norms are possible and the corresponding completions of $\mf{k}$ can be Lie algebras, and
even Hilbert Lie algebras. However, in those cases the norm is not invariant. 

Although $\widehat{\mf{k}}$ is not a Lie algebra, an interesting open  question is 
whether one can still make sense of the commutator {\em as a distribution}, by considering
the commutator of two elements belonging to a dense subspace of $\widehat{\mf{k}}$.
This would fit with earlier observations in \cite{Kleinschmidt:2006dy}.


\section{\texorpdfstring{$\mf{so}(16)$ representations}{so(16) representations}}
\label{app:so16reps}

The translation of dimensions of $\mf{so}(16)$ representations to the labels of the highest weight in the conventions of the LiE software~\cite{LiE} are

\begin{multicols}{2}
\begin{tabular}{rcl}
${\bf 1}$ & $\leftrightarrow$ & [0,0,0,0,0,0,0,0]\\
${\bf 16}$ & $\leftrightarrow$ & [1,0,0,0,0,0,0,0]\\
${\bf 120}$ & $\leftrightarrow$ & [0,1,0,0,0,0,0,0]\\
${\bf 128}_s$ & $\leftrightarrow$ & [0,0,0,0,0,0,0,1]\\
${\bf 128}_c$ & $\leftrightarrow$ & [0,0,0,0,0,0,1,0]\\
${\bf 560}$ & $\leftrightarrow$ & [0,0,1,0,0,0,0,0]\\
${\bf 1344}$ & $\leftrightarrow$ & [1,1,0,0,0,0,0,0]\\
${\bf 1820}$ & $\leftrightarrow$ & [0,0,0,1,0,0,0,0]\\
${\bf 1920}_s$ & $\leftrightarrow$ & [1,0,0,0,0,0,0,1]\\
${\bf 4368}$ & $\leftrightarrow$ & [0,0,0,0,1,0,0,0]\\
${\bf 6435}_+$ & $\leftrightarrow$ & [0,0,0,0,0,0,0,2]\\
${\bf 7020}$ & $\leftrightarrow$ & [1,0,1,0,0,0,0,0]\\
${\bf 8008}$ & $\leftrightarrow$ & [0,0,0,0,0,1,0,0]\\
${\bf 11440}$ & $\leftrightarrow$ & [0,0,0,0,0,0,1,1]\\ 
${\bf 13312}_s$ & $\leftrightarrow$ & [0,1,0,0,0,0,0,1]\\
${\bf 13312}_c$ & $\leftrightarrow$ & [0,1,0,0,0,0,1,0]\\
${\bf 15360}_c$ & $\leftrightarrow$ & [2,0,0,0,0,0,1,0]\\
${\bf 24192} $ & $\leftrightarrow$ & [1,0,0,1,0,0,0,0]\\
\end{tabular}
\columnbreak

\begin{tabular}{rcl}
${\bf 56320}_s$ & $\leftrightarrow$ & [0,0,1,0,0,0,0,1]\\ 
${\bf 60060}$ & $\leftrightarrow$ & [1,0,0,0,1,0,0,0]\\
${\bf 91520}_+$ & $\leftrightarrow$ & [1,0,0,0,0,0,0,2]\\
${\bf 112320}$ & $\leftrightarrow$ & [1,0,0,0,0,1,0,0]\\
${\bf 141372}$ & $\leftrightarrow$ & [0,1,0,1,0,0,0,0]\\
${\bf 141440}_s$ & $\leftrightarrow$ & [1,1,0,0,0,0,0,1]\\
${\bf 161280}_s$ & $\leftrightarrow$ & [0,0,0,1,0,0,0,1]\\
${\bf 161280}_c$ & $\leftrightarrow$ & [0,0,0,1,0,0,1,0]\\
${\bf 162162}$ & $\leftrightarrow$ & [1,0,0,0,0,0,1,1]\\
${\bf 183040}_s$ & $\leftrightarrow$ & [0,0,0,0,0,0,0,3]\\
${\bf 326144}_s$ & $\leftrightarrow$ & [0,0,0,0,1,0,0,1]\\
${\bf 439296}_c$ & $\leftrightarrow$ & [0,0,0,0,0,0,1,2]\\
${\bf 465920}_c$ & $\leftrightarrow$  & [0,0,0,0,0,1,1,0]\\
${\bf 595595}_+$ & $\leftrightarrow$ & [0,1,0,0,0,0,0,2]\\
${\bf 670208}_s$ & $\leftrightarrow$ & [1,0,1,0,0,0,0,1]\\
${\bf 2036736}_s$ & $\leftrightarrow$ & [1,0,0,1,0,0,0,1]\\
${\bf 2489344}_s$ & $\leftrightarrow$ & [1,0,0,0,0,0,0,3] \\
${\bf 6223360}_s$ & $\leftrightarrow$ & [1,0,0,0,0,1,0,1]
\end{tabular}
\end{multicols}

\end{document}